\newtheorem{theo}{Theorem}[section]
\newtheorem{lemm}[theo]{Lemma}
\newtheorem{coro}[theo]{Corollary}
\newtheorem{prop}[theo]{Proposition}
\newtheorem{rema}[theo]{Remark}
\newtheorem{prob}[theo]{Problem}
\numberwithin{equation}{section}
\begin{document}
\title{\textbf{A new boundary of the mapping class group}\footnote{The work was partially supported by NSFC, No: 11771456.}}
\author{\textbf{Lixin $\mbox{Liu}^{1}$ and Yaozhong $\mbox{Shi}^{2,}$}\footnote{Corresponding author.}
\\\vspace{-2mm}{\normalsize $^{1}$Sun Yat-sen University, School of Mathematics}
\\{\normalsize 510275, Guangzhou, P. R. China; mcsllx@mail.sysu.edu.cn}
\\\vspace{-2mm}{\normalsize $^{2}$Sun Yat-sen University, School of Mathematics}
\\{\normalsize 510275, Guangzhou, P. R. China; shiyaozhong314@126.com}}
\date{}
\maketitle
\begin{abstract}
Based on the action of the mapping class group on the space of measured foliations, we construct a new boundary of the mapping class group and study the structure of this boundary. As an application, for any point in Teichm\"uller space, we consider the orbit of this point under the action of the mapping class group and describe the closure of this orbit in the Thurston compactification and the Gardiner-Masur compactification of Teichm\"uller space. We also construct some new points in the Gardiner-Masur boundary of Teichm\"{u}ller space.
\end{abstract}
\noindent \textit{Keywords}:  Mapping class group, Measured foliation, Teichm\"uller space
\\\noindent \textit{MSC (2010)}: 30F60, 32G15, 57M99
\section{Introduction}
In order to study the structure of a group $G$, it is natural to equip $G$ with a boundary. For example, considering the Cayley graph of a finite generated group and rescaling the lengths of the edges by a summable function, we obtain a compact completion of the graph under this new metric. The boundary of this completion is the Floyd boundary of $G$ (see \cite{Floyd}). Besides, a probability measure $\nu$ on $G$ determines a random walk on $G$. There is also a boundary determined by this random walk, which is called the Poisson boundary. The Poisson boundary is strongly related to the harmonic functions corresponding to the random walk (see \cite{Kai}).

Let $S$ be an oriented surface of genus $g$ with $n$ punctures. We assume that $N=3g-3+n\geq1$. In this paper, we study a special group: the mapping class group $Mod(S)$ of $S$. $Mod(S)$ acts on two spaces: $\mathcal{T}(S)$ and $\mathcal{MF}$, where $\mathcal{T}(S)$ is the Teichm\"uller space of $S$ and $\mathcal{MF}$ is the space of measured foliations on $S$. Let $\mathcal{PMF}=\mathcal{MF}-\{0\}/R_{+}$ be the space of projective measured foliations. Based on the action of $Mod(S)$ on $\mathcal{T}(S)$, $\mathcal{MF}$ and $\mathcal{PMF}$, we can study the structure of $Mod(S)$. The most important result in the study of mapping class group may be the Nielsen-Thurston
classification theorem, which states that every $f\in Mod(S)$ is one of three special types: periodic, reducible, or pseudo-Anosov. The structure of subgroups of $Mod(S)$ was also studied by the action of $Mod(S)$ on $\mathcal{T}(S)$ and $\mathcal{MF}$ (see \cite{MCP}, \cite{Ivanov}, etc.).

Different boundaries of $Mod(S)$ were studied by various people (see \cite{Masur-Minsky}, \cite{K-M}, \cite{Ham}, etc.). In particular, Kaimanovich and Masur (see \cite{K-M}) studied the Poisson boundary of $Mod(S)$. They proved that under some natural conditions, the Poisson boundary of $Mod(S)$ is $\mathcal{PMF}$ equipped with a unique measure. In order to obtain their main result, they considered the action of $Mod(S)$ on $\mathcal{PMF}$ and analyzed the asymptotic behaviour of the action of an infinite sequence $\{f_n\}_{n=1}^{\infty}$ on $\mathcal{PMF}$ (see Subsection 1.5 in \cite{K-M}). Inspired by their idea, we study the asymptotic behaviour of $\{f_n\}_{n=1}^{\infty}$ on $\mathcal{MF}$. In the special case that $f_n=f^{n}$ for a fixed $f\in Mod(S)$, we know that
\\(1) when $f$ is a Dehn Twist determined by a simple closed curve $\alpha$, $\lim_{n\rightarrow\infty}\frac{1}{n}f^{n}(F)=i(\alpha,F)\alpha$ for any $F\in\mathcal{MF}$;
\\(2) when $f$ is a pseudo-Anosov map with $\lambda>1$, $f(F^{u})=\lambda F^{u}$, $f(F^{s})=\lambda^{-1}F^{s}$ and $i(F^{u},F^{s})=1$, $\lim_{n\rightarrow\infty}\lambda^{-n}f^{n}(F)=i(F^{s},F)F^{u}$ for any $F\in\mathcal{MF}$.

A natural generalization of these two classical results is
\begin{prob}\label{problem0}
Is the action of $Mod(S)$ on $\mathcal{MF}$ ``projectively precompact"? That is, for any sequence $f_n\in Mod(S)$, are there a subsequence $f_{n_k}$ and a sequence of positive numbers $t_k$ such that $t_kf_{n_k}:\mathcal{MF}\rightarrow\mathcal{MF}$ converges to some map $f_0:\mathcal{MF}\rightarrow\mathcal{MF}$?
\end{prob}
Note that it is necessary to take a subsequence $f_{n_k}$ and a positive scalar $t_k$ in Problem \ref{problem0}, since without these two operations, a generic sequence $f_n$ is not convergent.

We settle Problem \ref{problem0} by embedding $Mod(S)$ into an appropriate space and constructing a new boundary of $Mod(S)$. For this, we need some notations (see Section \ref{sec2}). Let $\Omega(\mathcal{MF})$ be the set of all homogeneous measurable functions from $\mathcal{MF}$ to $\mathcal{MF}$. Note that $R_{+}$ acts on $\Omega(\mathcal{MF})$ by multiplication. Let $P\Omega(\mathcal{MF})=\Omega(\mathcal{MF})-\{0\}/R_{+}$ be the projective space of $\Omega(\mathcal{MF})$. Endow $\Omega(\mathcal{MF})$ with the topology of pointwise convergence and $P\Omega(\mathcal{MF})$ with the quotient topology. Considering the action of $Mod(S)$ on $\mathcal{MF}$, there is a natural map $I:Mod(S)\rightarrow P\Omega(\mathcal{MF})$ (see Section \ref{sec2}). Up to a finite normal subgroup $ker(I)$, we can identify $Mod(S)$ with its image $I(Mod(S))$, which is denoted by $E$ for simplicity. Thus $Mod(S)$ is nearly embedded into $P\Omega(\mathcal{MF})$ and $P\Omega(\mathcal{MF})$ is the appropriate space for settling Problem \ref{problem0}.

With these notations, we prove that the closure $Cl(E)$ of $E=I(Mod(S))$ in $P\Omega(\mathcal{MF})$ is metrizable and compact (Theorem \ref{main}). Note that this result answers Problem \ref{problem0} completely: by the definition of $P\Omega(\mathcal{MF})$, identifying $Mod(S)$ with $E=I(Mod(S))$, the compactness of $Cl(E)$ means that for any sequence $f_n\in Mod(S)$, there are a subsequence $f_{n_k}$ and a sequence of positive numbers $t_k$ such that $t_kf_{n_k}:\mathcal{MF}\rightarrow\mathcal{MF}$ converges to a map $f_0:\mathcal{MF}\rightarrow\mathcal{MF}$.

Identifying $Mod(S)$ with $E=I(Mod(S))$, $\partial E=Cl(E)-E$ is a boundary of $Mod(S)$. For the structure of $\partial E$, we prove (see Section \ref{sec3})
\begin{itemize}
  \item In $Cl(E)$, $E$ is discrete and $\partial E$ is closed (Proposition \ref{prop1}).
  \item The operations of multiplication and inverse on $Mod(S)$ extend continuously to $Cl(E)$ (Proposition \ref{prop2}, \ref{prop3}, \ref{prop4}). But $Cl(E)$ is not indeed a group (Remark \ref{rema2}).
  \item Any point $p\in \partial E$ can be represented as $[\sum_{i=1}^{k}i(E_i,\cdot)F_i]$, where $\{F_i\}\,(\{E_i\})$ are disjoint measured foliations (Theorem \ref{main2}).
  \item Some special points of $\partial E$ are constructed (Proposition \ref{prop5}, \ref{prop6}, \ref{prop7}). In particular, $\partial E\neq\emptyset$.
\end{itemize}

We also consider the actions of $Mod(S)$ on the Thurston compactification $\mathcal{T}^{Th}(S)$ and the Gardiner-Masur compactification $\mathcal{T}^{GM}(S)$ of $\mathcal{T}(S)$. The Thurston boundary is precisely $\mathcal{PMF}$; while the structure of the Gardiner-Masur boundary $GM$ is complex. See \cite{GM}, \cite{Miya-1}, \cite{Miya-2} and \cite{Walsh2} for more details on the Gardiner-Masur boundary. $Mod(S)$ acts on $\mathcal{T}^{Th}(S)$ and $\mathcal{T}^{GM}(S)$ naturally. Considering the actions of $Mod(S)$ on $\mathcal{T}^{Th}(S)$ and $\mathcal{T}^{GM}(S)$, we have two maps
$$\Pi_{Th}:Mod(S)\times \mathcal{T}^{Th}(S)\rightarrow\mathcal{T}^{Th}(S),\,(f,p)\mapsto f(p)$$
and $$\Pi_{GM}:Mod(S)\times \mathcal{T}^{GM}(S)\rightarrow\mathcal{T}^{GM}(S),\,(f,p)\mapsto f(p).$$
If we endow $Mod(S)$ with the discrete topology, then $\Pi_{Th}$ and $\Pi_{GM}$ are both continuous. Since $Cl(E)=E\bigcup \partial E$ is a completion of $Mod(S)$ in some sense, we extend the domains of $\Pi_{Th}$ and $\Pi_{GM}$ to $Cl(E)\times \mathcal{T}^{Th}(S)$ and $Cl(E)\times \mathcal{T}^{GM}(S)$, respectively (see Theorem \ref{main5} and Remark \ref{rema5}).

As an application, we prove Theorem \ref{main3}, which answers the following problem:
\begin{prob}\label{problem1}
For any $x_0$ in $\mathcal{T}(S)$, considering the orbit $\Gamma(x_0)$ of $x_0$ under the action of $Mod(S)$, how to describe the closure of $\Gamma (x_0)$ in $\mathcal{T}^{Th}(S)$ or $\mathcal{T}^{GM}(S)$?
\end{prob}

Besides, using the new boundary $\partial E$, we construct some new points in the Gardiner-Masur boundary of $\mathcal{T}(S)$ (see Remark \ref{rema7}).

The new boundary $\partial E$ is related to a special boundary of $Mod(S)$. For a base point $x\in\mathcal{T}(S)$, identifying $Mod(S)$ with the orbit $\Gamma(x)$ of $x$ by a map $Mod(S)\ni f\mapsto f(x)\in\Gamma(x)$ and then taking boundary in $\mathcal{T}^{Th}(S)$, we get a boundary of $Mod(S)$ depending upon the base point $x$. Note that this boundary is indeed the whole $\mathcal{PMF}$ (see Theorem \ref{main3}). Thus we may call it the ``Thurston boundary" of $Mod(S)$ with base point $x$. And then the new boundary $\partial E$ covers each ``Thurston boundary" of $Mod(S)$ (see Remark \ref{rema6}).

It may be interesting to study the relations between the new boundary $\partial E$ of $Mod(S)$ and some known boundaries of $Mod(S)$, such as the Floyd boundary, the Poisson boundary, etc. We will study these relations in coming future. Besides, Hamenst\"adt introduced a new boundary of $Mod(S)$ (see Section 8 in \cite{Ham2}). It is also interesting to compare our new boundary with Hamenst\"adt's boundary.

We may consider a more general space than $\mathcal{MF}$, that is, the space of geodesic currents (see \cite{Bonahon}). Note that the space of geodesic currents includes $\mathcal{MF}$ and $\mathcal{T}(S)$. Since the construction of the new boundary $\partial E$ is based on the action of $Mod(S)$ on $\mathcal{MF}$ and $Mod(S)$ also acts continuously on the space of geodesic currents, it is natural to ask the following interesting problem:
\begin{prob}\label{problem2}
If we replace the space of measured foliations by the space of geodesic currents, does the construction of the new boundary work?
\end{prob}

This paper is organized as follows.

In Section \ref{sec1}, we introduce background materials on measured foliations, Teichm\"{u}ller space and the action of the mapping class group. In Section \ref{sec2}, we construct the new boundary of $Mod(S)$. In Section \ref{sec3}, we study the structure of the new boundary. In Section \ref{sec4}, we give some applications of our new boundary.

\section{Preliminaries} \label{sec1}
\subsection{Measured foliations}
Let $\mathcal{S}=\mathcal{S}(S)$ be the set of isotopy classes of essential simple closed curves on $S$. For any $\alpha,\beta$ in $\mathcal{S}$, denote by $i(\alpha,\beta)$ the geometric intersection number between $\alpha$ and $\beta$.

Let $R_{\geq 0}=\{x\in R:x\geq 0\}$ and $R_{+}=\{x\in R:x>0\}$. Let $R_{\geq 0}^{\mathcal{S}}$ be the space of non-negative functions on $\mathcal{S}$ endowed with the topology of pointwise convergence. Denote the set of weighted simple closed curves by $R_{+}\times \mathcal{S}=\{t\cdot\alpha :t>0,\alpha\in \mathcal{S}\}$. It is known that
\begin{equation*}
i_{*}:R_{+}\times \mathcal{S}\rightarrow R_{\geq 0}^{\mathcal{S}},
\end{equation*}
\begin{equation*}
t\cdot \alpha\mapsto t\cdot i(\alpha,\cdot)
\end{equation*}
is injective and induces a topology on $R_{+}\times\mathcal{S}$. With this topology, $i_{*}$ is an embedding.

The closure of $i_{*}(R_{+}\times\mathcal{S})$ in $R_{\geq 0}^{\mathcal{S}}$ is called the space of measured foliations on $S$, which is denote by $\mathcal{MF}$. $R_{+}$ acts on $R_{\geq 0}^{\mathcal{S}}$ by multiplication. Denote $R_{\geq 0}^{\mathcal{S}}-\{0\}/R_{+}$ by $PR_{\geq 0}^{\mathcal{S}}$ and $\mathcal{MF}-\{0\}/R_{+}$ by $\mathcal{PMF}$. $\mathcal{PMF}$ is called the space of projective measured foliations. For $F\in\mathcal{MF}-\{0\}$, denote $[F]\in\mathcal{PMF}$ to be the projective class of $F$. Note that $\mathcal{S}$ is embedded in $PR_{\geq 0}^{\mathcal{S}}$, and the closure of $\mathcal{S}$ in $PR_{\geq 0}^{\mathcal{S}}$ is $\mathcal{PMF}$. It is well known that $\mathcal{MF}$ is homeomorphic to $R^{6g-6+2n}$ and $\mathcal{PMF}$ is homeomorphic to $S^{6g-7+2n}$ (see \cite{FLP}).

For two weighted simple closed curves $t\alpha,s\beta\in R_{+}\times\mathcal{S}$, define their intersection number by the homogeneous equation $i(t\alpha,s\beta)=tsi(\alpha,\beta)$. Then the intersection number function $i$ extends continuously to $i:\mathcal{MF}\times\mathcal{MF}\rightarrow R_{\geq 0}$.

Any $F\in\mathcal{MF}-\{0\}$ is represented by a singular foliation with a transverse measure $\mu$ in the sense that for any simple closed curve $\alpha$,
\begin{equation*}
i(F,\alpha)=\inf_{\alpha^{'}}\int_{\alpha^{'}}d\mu,
\end{equation*}
where the infimum is over all simple closed curves $\alpha^{'}$ homotopic to $\alpha$.

Besides, we need the definition of ergodic decomposition of a measured foliation. A saddle connection of a foliation is a leaf connecting two singularities (not necessarily distinct). The critical graph of a foliation is defined to be the union of all saddle connections. The complement of the critical graph contains finitely many connected components. Each connected component is either a cylinder swept out by closed leaves or a so-called minimal component (each leaf is dense). On every minimal component $D$, there exists a finite set of ergodic transverse measures $\mu_1,...,\mu_n$ such that any transverse measure $\mu$ on $D$ can be written as $\mu=\sum_{j=1}^{n}f_j\mu_j$ for some non-negative coefficients $\{f_j\}$. An indecomposable component of a measured foliation is either a cylinder with a positive weight or a minimal component $D$ with a ergodic measure $\mu_j$. A measured foliation is called indecomposable if it contains only one indecomposable component. With these notations, any measured foliation $F$ can be uniquely represented as
\begin{equation*}
F=\sum_{i=1}^{k}F_i,
\end{equation*}
where each $F_i$ is an indecomposable measured foliation such that $i(F_i,F_j)=0$ and $[F_i]\neq[F_j]$ for $i\neq j$. We call this the ergodic decomposition of $F$.

Finitely many simple closed curves $\alpha_1,\alpha_2,...,\alpha_k$ fill up $S$ if for any $F\in \mathcal{MF}-\{0\}$,
\begin{equation*}
\sum_{i=1}^{k}i(\alpha_i,F)>0.
\end{equation*}

We need the following result (Lemma 6.3, \cite{Walsh1}).
\begin{lemm} \label{lemm0}
Let $\{F_i:i=0,1,2,...,k\}$ be some projectively distinct indecomposable measured foliations such that $i(F_i,F_j)=0\,(i\neq j)$. Then for any $\epsilon>0$, there exists a simple closed curve $\alpha$ such that
\begin{equation*}
i(F_i,\alpha)<\epsilon i(F_0,\alpha),\,i=1,2,...,k.
\end{equation*}
\end{lemm}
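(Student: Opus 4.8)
The plan is to separate the two essentially different situations that the hypothesis $i(F_i,F_j)=0$ allows, and to treat the genuinely quantitative one by ergodic theory. Using the ergodic decomposition recalled above, each indecomposable $F_i$ is supported either on a cylinder or on a minimal component. If the support of $F_0$ is disjoint, as a subsurface, from the supports of $F_1,\dots,F_k$, the lemma is immediate: since $F_0$ fills its support, any essential simple closed curve $\alpha$ lying in the interior of that support and disjoint from the other supports satisfies $i(F_0,\alpha)>0$ and $i(F_i,\alpha)=0$ for $i\ge 1$, which is stronger than required. By additivity of intersection numbers over the decomposition, I would therefore reduce to the remaining case, in which $F_0$ and (some of) the $F_i$ are projectively distinct ergodic transverse measures carried by one and the same minimal component $D$. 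This is the only case with real content, because there is no measured foliation $G$ with $i(G,F_0)>0$ and $i(G,F_1)=0$ when $F_0,F_1$ share the underlying foliation of $D$; hence the factor $\epsilon$ cannot be improved to exact vanishing, and a purely topological approximation of $F_0$ by simple closed curves, which would only give the useless indeterminate ratio since $i(F_0,F_0)=0$, cannot work.

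For the core case I would exploit that distinct ergodic measures are mutually singular. Fix a transversal arc $\tau$ to the foliation of $D$; each $F_i$ induces a finite transverse measure $\mu_i$ on $\tau$, and $\mu_0,\mu_1,\dots,\mu_k$ are pairwise mutually singular. By inner and outer regularity there is a finite union of subarcs $A\subset\tau$ with $\mu_0(A)$ a definite fraction of $\mu_0(\tau)$ while $\mu_i(A)<\delta$ for every $i\ge 1$, where $\delta$ is chosen in terms of $\epsilon$. I then want a simple closed curve whose transverse crossings with the foliation of $D$ are concentrated on $A$: the idea is to concatenate long leaf segments of $D$, which contribute essentially nothing to any $\int_\alpha d\mu_i$ as they run tangent to the foliation, with short transverse returns made across $\tau$ inside $A$. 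Because $D$ is minimal, leaves are dense and such returns exist; Birkhoff's ergodic theorem applied to the first--return map of $D$ to $A$ guarantees that a $\mu_0$--generic leaf equidistributes, so that for a suitable length the resulting closed curve $\alpha$ has $i(F_0,\alpha)$ comparable to $\mu_0(A)\cdot(\text{number of returns})$ while $i(F_i,\alpha)\le \mu_i(A)\cdot(\text{number of returns})$ up to controlled error. The ratios $i(F_i,\alpha)/i(F_0,\alpha)$ are then bounded by a constant multiple of $\delta/\mu_0(A)$, which is $<\epsilon$ once $\delta$ is small.

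The main obstacle is precisely the passage from this measure--theoretic blueprint to an honest \emph{embedded} curve, together with the verification that the displayed intersection numbers are the \emph{minimal} ones in the homotopy class. A curve assembled from leaf arcs and transverse jumps need not be simple, and in minimal position it might simplify in a way that destroys the lower bound on $i(F_0,\alpha)$. To make the estimates rigorous I would phrase the construction through train tracks: carry the foliation of $D$ on a train track $\tau_0$, realize $\mu_0,\dots,\mu_k$ as weight systems $w^0,\dots,w^k$, and realize the desired $\alpha$ as a simple closed curve carried by a transverse (dual) track, so that $i(F_i,\alpha)$ becomes the pairing of $w^i$ with the dual weight of $\alpha$. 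Mutual singularity of the $\mu_i$ is reflected, after finitely many splittings of $\tau_0$ (equivalently, after inducing on a short enough sub--transversal), by the mass of $w^0$ becoming concentrated on branches where $w^1,\dots,w^k$ are negligible; choosing a vertex cycle of the dual track supported on those branches yields a genuine simple closed curve obeying the required inequalities. Controlling simplicity and minimal position through this train--track combinatorics, rather than by direct cut--and--paste, is where the real work lies; everything else is regularity of measures and the ergodic theorem.
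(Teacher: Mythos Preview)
The paper does not give its own proof of this lemma; it is quoted as Lemma~6.3 of \cite{Walsh1}. So there is no in-paper argument to compare yours against, and your sketch must stand on its own.

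Your overall plan is sound, and you have correctly isolated the only case with genuine content: several mutually singular ergodic transverse measures on a common minimal component, to be separated via train-track combinatorics and the dual pairing. There is, however, a real gap in your ``disjoint supports'' reduction. When $F_0$ is a weighted simple closed curve $c\gamma$, its support is an annulus, and every essential simple closed curve contained in that annulus is isotopic to the core $\gamma$, hence has $i(F_0,\alpha)=0$. So the sentence ``since $F_0$ fills its support, any essential simple closed curve $\alpha$ lying in the interior of that support \dots\ satisfies $i(F_0,\alpha)>0$'' is simply false in this case. Any admissible $\alpha$ must cross $\gamma$ and therefore enter the complementary subsurfaces carrying the other $F_i$, so the inequalities $i(F_i,\alpha)<\epsilon\, i(F_0,\alpha)$ are no longer automatic. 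One repair is to start from a curve $\beta$ with $i(\beta,\gamma)>0$ and, in each component of $S\setminus\gamma$, replace the arcs of $\beta$ (rel endpoints on $\gamma$) by essential arcs of arbitrarily small transverse measure with respect to the $F_i$ supported there; such arcs exist because in a minimal component one can run nearly along a leaf before closing up, while a multicurve component can simply be avoided. Alternatively, drop the case split entirely and carry all the $F_i$ on a common train track from the outset, arguing uniformly; this absorbs the cylinder case without special pleading. Apart from this oversight, your blueprint for the minimal-component case is correct in outline, and you have honestly flagged the remaining technical issues (embeddedness and minimal position of the constructed curve) as the substantive work.
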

\subsection{Teichm\"{u}ller space and its compactifications}
Let $\mathcal{T}(S)$ be the Teichm\"{u}ller space of $S$. There are two equivalent definitions of $\mathcal{T}(S)$: the set of isotopy classes of hyperbolic metrics on $S$ and the set of isotopy classes of conformal structures on $S$. For two hyperbolic metric $m_1,m_2$ of finite area on $S$, $m_1$ is equivalent to $m_2$ if there exists an orientation-preserving homeomorphism $f:S\rightarrow S$ isotopic to the identity map such that $f_{*}m_1=m_2$, where $f_{*}m_1$ is the push-forward of $m_1$ by $f$. $\mathcal{T}(S)$ is defined to be the set of equivalence classes of hyperbolic metrics of finite area on $S$. For two conformal structure $\mu_1,\mu_2$ on $S$, $\mu_1$ is equivalent to $\mu_2$ if there exists an orientation-preserving homeomorphism $f:S\rightarrow S$ isotopic to the identity map such that $f_{*}\mu_1=\mu_2$, where $f_{*}\mu_1$ is the push-forward of $\mu_1$ by $f$. $\mathcal{T}(S)$ is also defined to be the set of equivalence classes of conformal structures on $S$. By the uniformization theorem, these two definitions are consistent.

For the definition corresponding to hyperbolic metric, we consider the hyperbolic length function on $\mathcal{T}(S)$. For any $x\in\mathcal{T}(S)$ and $\alpha\in\mathcal{S}$, let $l(x,\alpha)$ be the hyperbolic length of the geodesic isotopic to $\alpha$ in the hyperbolic metric corresponding to $x$. The hyperbolic length of a simple closed curve extends continuously to the hyperbolic length of a measured foliation. The map
\begin{equation*}
l(\cdot,\cdot):\mathcal{T}(S)\times \mathcal{MF}\rightarrow R,
\end{equation*}
\begin{equation*}
(x,F)\mapsto l(x,F)
\end{equation*}
is continuous.

Thurston constructed a compactification of Teichm\"uller space by the hyperbolic lengths of simple closed curves. Define a map $\widetilde{\varphi}_{Th}$ by
\begin{equation*}
\widetilde{\varphi}_{Th}: \mathcal{T}(S)\rightarrow R_{\geq0}^{\mathcal{S}},
\end{equation*}
\begin{equation*}
x\mapsto (l(x,\alpha))_{\alpha\in\mathcal{S}}.
\end{equation*}

Let $pr:R_{\geq0}^{\mathcal{S}}-\{0\}\rightarrow PR_{\geq0}^{\mathcal{S}}$ be the projective map. Then the map $\varphi_{Th}=pr\circ\widetilde{\varphi}_{Th}$ is an embedding and the closure of the image is compact. Moreover, $Cl(\mathcal{T}(S))-\mathcal{T}(S)=\mathcal{PMF}$. Thus we have a compactification of $\mathcal{T}(S)$ denoted by $\mathcal{T}^{Th}(S)=\mathcal{T}(S)\bigcup\mathcal{PMF}$. $\mathcal{T}^{Th}(S)$ is the Thurston compactification and $\mathcal{PMF}$ is the Thurston boundary.

A sequence $\{x_n\}_{n=1}^{\infty}$ in $\mathcal{T}(S)$ converges to a boundary point $[F]$ in $\mathcal{PMF}$ if and only if there exists a positive sequence $\{t_n\}_{n=1}^{\infty}$ such that $\lim_{n\rightarrow\infty}t_n=0$ and $\lim_{n\rightarrow\infty}t_nl(x_n,\alpha)=i(F,\alpha)$ for any $\alpha\in\mathcal{S}$.

For the definition corresponding to conformal structure, we consider the extremal length function on $\mathcal{T}(S)$. For any $x\in\mathcal{T}(S)$ and $\alpha\in\mathcal{S}$, let $Ext(x,\alpha)$ be the extremal length of $\alpha$ in the conformal structure corresponding to $x$. The extremal length of a simple closed curve extends continuously to the extremal length of a measured foliation. For more details on extremal length, see \cite{Kerc}. The map
\begin{equation*}
Ext(\cdot,\cdot):\mathcal{T}(S)\times \mathcal{MF}\rightarrow R,
\end{equation*}
\begin{equation*}
(x,F)\mapsto Ext(x,F)
\end{equation*}
is continuous.

Gardiner and Masur constructed a compactification of Teichm\"uller space by the extremal lengths of simple closed curves in \cite{GM}. Define a map $\widetilde{\varphi}_{GM}$ by
\begin{equation*}
\widetilde{\varphi}_{GM}: T(S)\rightarrow R_{\geq0}^{\mathcal{S}},
\end{equation*}
\begin{equation*}
x\mapsto (Ext^{\frac{1}{2}}(x,\alpha))_{\alpha\in\mathcal{S}}.
\end{equation*}

The map $\varphi_{GM}=pr\circ\widetilde{\varphi}_{GM}$ is an embedding and the closure of the image is compact. Thus we have a compactification of $\mathcal{T}(S)$ denoted by $\mathcal{T}^{GM}(S)=\mathcal{T}(S)\bigcup GM$. $\mathcal{T}^{GM}(S)$ is the Gardiner-Masur compactification and $GM$ is the Gardiner-Masur boundary.

Different from the Thurston boundary $\mathcal{PMF}$, the structure of Gardiner-Masur boundary $GM$ is much more complex. For more details on its structure, see \cite{Miya-1}, \cite{Miya-2}, \cite{Walsh2}.
\subsection{The action of the mapping class group}
Let $Mod(S)$ be the mapping class group of surface $S$, which is the set of isotopy classes of orientation-preserving homeomorphisms of $S$. $Mod(S)$ acts on $\mathcal{MF}$ and $\mathcal{T}(S)$ by push-forward. Precisely, for $f\in Mod(S)$ and $x\in\mathcal{T}(S)$, if $m$ and $\mu$ are a hyperbolic metric and a conformal structure in the equivalence class $x$, respectively, define $f(x)$ to be the the equivalence class of $f_{*}m$ or $f_{*}\mu$. For a measured foliation $(F,\nu)$, define $f(F,\nu)$ to be $(f(F),f_{\ast}\nu)$. And its action on $\mathcal{T}(S)$ extends naturally to the Thurston compactification and the Gardiner-Masur compactification of $\mathcal{T}(S)$. For more details on $Mod(S)$, see \cite{FM}.

In this paper, we use the following convention: for any $x\in \mathcal{T}(S)$, $f\in Mod(S)$, $F\in \mathcal{MF}$,
\begin{equation*}
l(f(x),F)=l(x,f^{-1}(F)),\,Ext(f(x),F)=Ext(x,f^{-1}(F)).
\end{equation*}
\section{Construction of the new boundary}\label{sec2}
Based on the action of $Mod(S)$ on the measured foliation space $\mathcal{MF}=\mathcal{MF}(S)$, we construct a new boundary of $Mod(S)$ in this section.

For any $f\in Mod(S)$, $f$ acts on $\mathcal{MF}$ as a homogeneous continuous map $f:\mathcal{MF}\rightarrow \mathcal{MF}$, which is measurable in particular. Recall that $f$ is homogeneous if for any $F\in \mathcal{MF}$, $k\geq0$, $f(kF)=kf(F)$. Denote the set of all homogeneous measurable maps from $\mathcal{MF}$ to $\mathcal{MF}$ by $\Omega(\mathcal{MF})$. We endow $\Omega(\mathcal{MF})$ with the topology of pointwise convergence.

Since $R_{+}$ acts on $\mathcal{MF}$ by multiplication, multiplying any $f\in\Omega(\mathcal{MF})$ by a positive number $k$, we get a homogeneous measurable map $kf:\mathcal{MF}\rightarrow\mathcal{MF},\,F\mapsto kf(F)$. Thus $R_{+}$ also acts on $\Omega(\mathcal{MF})$ by multiplication. Then we have the projective space $P\Omega(\mathcal{MF})=\Omega(\mathcal{MF})-\{0\}/R_{+}$, where $0$ is the zero element in $\Omega(\mathcal{MF})$. Let $\pi:\Omega(\mathcal{MF})-\{0\}\rightarrow P\Omega(\mathcal{MF})$ be the projective map. Denote $[f]=\pi(f)$ to be the projective class of $f\in\Omega(\mathcal{MF})$.

We endow $P\Omega(\mathcal{MF})$ with the quotient topology induced by $\pi$. Precisely, for a sequence $\{[f_n]\}_{n=0}^{\infty}$ in $P\Omega(\mathcal{MF})$, $\lim_{n\rightarrow\infty}[f_n]=[f_0]$ if and only if there exists a positive sequence $\{t_n\}_{n=1}^{\infty}$ such that $t_nf_n$ converges to $f_0$ in the topology of pointwise convergence.

Sending $f\in Mod(S)$ to its action $f:\mathcal{MF}\rightarrow\mathcal{MF}$, we have a natural map
\begin{equation*}
\widetilde{I}:Mod(S)\rightarrow \Omega(\mathcal{MF}).
\end{equation*}
Composing it with $\pi$, we have another map
\begin{equation*}
I=\pi\circ\widetilde{I}:Mod(S)\rightarrow P\Omega(\mathcal{MF}).
\end{equation*}

The kernel $ker(I)=\{f\in Mod(S):[f:\mathcal{MF}\rightarrow\mathcal{MF}]=[id_{\mathcal{MF}}]\}$ is finite. In fact, if $f\in ker(I)$, then there exists a positive number $k$ such that for any $F\in\mathcal{MF}$, $kf(F)=F$. Since $R_{+}\mathcal{S}$ is dense in $\mathcal{MF}$, this is equivalent to that for any $\alpha\in \mathcal{S}$, $f(\alpha)=k\alpha$. Since $f$ sends a simple closed curve to a simple closed curve, we have $k=1$. Thus $f\in kerI$ is equivalent to that $f$ fixes the isotopy class of each essential simple closed curve. By the result in page 344 of \cite{FM}, we know that when the topology type $(g,n)$ of $S$ is $(2,0),(1,1),(1,2)$ or $(0,4)$, $ker(I)$ is a subgroup of order $2$ or $4$; in the other cases, $ker(I)$ is trivial. So up to the finite normal subgroup $ker(I)$ (with order $1,2$ or $4$), we can identify $Mod(S)$ with its image $I(Mod(S))$. For simplicity, denote the image $I(Mod(S))$ by $E$.

Let $Cl(E)$ be the closure of $E$ in $P\Omega(\mathcal{MF})$. The main result of this section is
\begin{theo}\label{main}
$Cl(E)$ is metrizable and compact.
\end{theo}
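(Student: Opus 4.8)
The plan is to show that $Cl(E)$ is metrizable by embedding it into a metrizable space, and to show compactness by proving sequential compactness together with metrizability. The key observation is that elements of $\Omega(\mathcal{MF})$ are homogeneous maps $\mathcal{MF}\to\mathcal{MF}$, and since $R_+\mathcal{S}$ is dense in $\mathcal{MF}$ and all maps in sight are continuous on $E$, a projective class $[f]$ is essentially determined by the values $f(\alpha)$ for $\alpha$ ranging over the countable set $\mathcal{S}$. This countability is what should buy metrizability.

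Let me think about how to carry this out. First I would set up a convenient system of "coordinates" on $P\Omega(\mathcal{MF})$ restricted to the relevant piece. Fix a sequence $\{\alpha_j\}_{j=1}^\infty$ enumerating $\mathcal{S}$ (or a filling countable collection), and recall that each $F\in\mathcal{MF}$ is recorded by the function $\beta\mapsto i(F,\beta)$ on $\mathcal{S}$, i.e. $\mathcal{MF}\hookrightarrow R_{\geq0}^{\mathcal{S}}$. Then to a map $f\in\Omega(\mathcal{MF})$ I associate the doubly-indexed array of nonnegative numbers $a_{jk}(f)=i\bigl(f(\alpha_j),\alpha_k\bigr)$. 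Homogeneity and the fact that a measured foliation is determined by its intersection numbers with all simple closed curves mean that, for the maps coming from $E$ and their pointwise limits, the array $\{a_{jk}\}$ determines $f$ on the dense set $R_+\mathcal{S}$, hence determines $[f]$ after projectivizing. The plan is to normalize each representative (say by demanding $\sum_{j,k} c_{jk}\,a_{jk}=1$ for a fixed summable positive weight $c_{jk}$, after checking this sum is finite and positive on $E$) and thereby realize $Cl(E)$ inside the compact metrizable cube $\prod_{j,k}[0,M_{jk}]$ for suitable bounds $M_{jk}$. Concretely, I would define a metric $d([f],[g])=\sum_{j,k} c_{jk}\,\bigl|\bar a_{jk}(f)-\bar a_{jk}(g)\bigr|$ on normalized representatives and verify it induces the subspace topology from $P\Omega(\mathcal{MF})$; this gives metrizability.

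For compactness, the plan is to prove that every sequence $[f_n]$ with $f_n\in E$ has a convergent subsequence in $P\Omega(\mathcal{MF})$. Given $f_n\in Mod(S)$, I would normalize so that the array $a_{jk}(f_n)$ lies in a fixed compact box: the crucial estimate is that intersection numbers $i(f_n(\alpha_j),\alpha_k)$, once rescaled by a single positive scalar $t_n$, can be controlled uniformly in $j,k$. Since each column/row is bounded after rescaling (using, for example, that the diagonal entries or a fixed finite filling family control the scale), a diagonal extraction over the countable index set $(j,k)$ produces a subsequence $f_{n_l}$ and scalars $t_{n_l}$ such that $t_{n_l} i(f_{n_l}(\alpha_j),\alpha_k)$ converges for every $j,k$. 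I would then assemble the limiting array into a genuine element $f_0\in\Omega(\mathcal{MF})$: first define $f_0(\alpha_j)\in\mathcal{MF}$ as the measured foliation with the prescribed limiting intersection numbers (invoking that $\mathcal{MF}$ is closed in $R_{\geq0}^{\mathcal{S}}$), then extend homogeneously to $R_+\mathcal{S}$, and finally extend to all of $\mathcal{MF}$ as a measurable homogeneous map. Because the topology on $P\Omega(\mathcal{MF})$ is exactly pointwise convergence up to scale, this shows $[f_{n_l}]\to[f_0]$, giving sequential compactness; combined with metrizability, $Cl(E)$ is compact.

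\textbf{The main obstacle} I anticipate is the passage from convergence of the countable array $\{a_{jk}\}$ to pointwise convergence of the maps on all of $\mathcal{MF}$, as required by the actual topology on $\Omega(\mathcal{MF})$. Convergence of $i(f_{n_l}(\alpha_j),\cdot)$ for every fixed $\alpha_j\in\mathcal{S}$ gives control only on the countable dense subset $R_+\mathcal{S}$; one must argue that this forces convergence of $t_{n_l}f_{n_l}(F)$ for an arbitrary $F\in\mathcal{MF}$, and moreover identify the limit with a well-defined measurable homogeneous map $f_0$. The delicate point is that the limit need no longer be continuous or injective (indeed the limit maps in $\partial E$ have the degenerate form $\sum_i i(E_i,\cdot)F_i$ foreshadowed in Theorem \ref{main2}), so I must work in the category of measurable maps and cannot rely on continuity to extend from the dense set. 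The resolution should exploit continuity of the intersection pairing $i:\mathcal{MF}\times\mathcal{MF}\to R_{\geq0}$ together with the fact that convergence in $R_{\geq0}^{\mathcal{S}}$ is tested against the countable family $\mathcal{S}$, so that uniform control of $t_{n_l}i(f_{n_l}(F),\alpha_k)$ over $k$ yields a well-defined limiting foliation $f_0(F)$ for each $F$; the measurability and homogeneity of $F\mapsto f_0(F)$ would then follow from those of the approximants and a standard limit argument.
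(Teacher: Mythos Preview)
Your overall strategy—encode $[f]$ by intersection numbers against simple closed curves, metrize, then extract subsequences—is the same spirit as the paper's, but there is a genuine gap at exactly the obstacle you flag, and your proposed resolution does not work as stated. The missing insight is the dualizing identity $i(f(F),\alpha)=i(F,f^{-1}(\alpha))$ for $f\in Mod(S)$. The paper fixes a \emph{finite} filling system $\alpha_1,\dots,\alpha_N$ for which $\Phi:F\mapsto(i(\alpha_i,F))_{i=1}^N$ embeds $\mathcal{MF}$ into $R^N$; then $\Phi(t_nf_n(F))=\bigl(i(t_nf_n^{-1}(\alpha_i),F)\bigr)_{i=1}^N$, so once one extracts a subsequence along which the \emph{finitely many} foliations $t_nf_n^{-1}(\alpha_i)$ converge in $\mathcal{MF}$ to some $F_i$ (the scale $t_n$ is found via the Thurston compactification applied to an orbit $f_n(x_0)$, and convergence uses compactness of bounded-length sets in $\mathcal{MF}$), continuity of $i(\cdot,\cdot)$ gives $t_nf_n(F)\to\Phi^{-1}\bigl((i(F_i,F))_i\bigr)$ for \emph{every} $F$, uniformly on compacta. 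In particular the limit $f_0$ is \emph{continuous} (this is Corollary~\ref{coro1}), so your belief that ``the limit need no longer be continuous'' and that you ``must work in the category of measurable maps'' is a misconception that actively obstructs you: the paper's metric $\widehat d$ is a sup-metric over the compact set $\mathcal{MF}_1=\{l\le1\}$, which requires continuous maps and delivers uniform convergence on compacta directly.

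Two further issues. First, your normalization $\sum_{j,k}c_{jk}a_{jk}(f)=1$ is not obviously well-defined: the numbers $i(\alpha_j,\alpha_k)$ are unbounded over $\mathcal{S}\times\mathcal{S}$, so for a fixed summable weight $(c_{jk})$ there is no reason the sum is finite even for $f=\mathrm{id}$; the paper avoids this entirely by using only $N$ curves. Second, you never check that your metric induces the \emph{given} subspace topology on $Cl(E)\subset P\Omega(\mathcal{MF})$ (pointwise convergence up to scale). The paper devotes a separate argument to this, showing via the precompactness lemma that on $Cl(E)$ pointwise and locally-uniform convergence coincide (Corollary~\ref{coro2}); without this step you have metrized \emph{some} space, but not $Cl(E)$ with its defined topology, and compactness of the former says nothing about the latter.
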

Thus $Cl(E)$ is a completion of $Mod(S)$ and $\partial E=Cl(E)-E$ is a boundary of $Mod(S)$ in some sense.

We need some preparations to prove Theorem \ref{main}. In order to give a clear description to the topology of $\mathcal{MF}$, we choose $N$ simple closed curves $\{\alpha_1,\alpha_2,...,\alpha_N\}$ filling up the surface $S$ such that the map
\begin{equation*}
\Phi: \mathcal{MF}\rightarrow R^{N},
\end{equation*}
\begin{equation*}
F\mapsto \big(i(\alpha_1,F),i(\alpha_2,F),...,i(\alpha_N,F)\big)
\end{equation*}
is an embedding (see \cite{FLP}). As a result, we identify $\mathcal{MF}$ with the image $\Phi(\mathcal{MF})$ which is endowed with the Euclidean metric on $R^{N}$.

Let $l(\cdot)=\sum_{i=1}^{N}i(\alpha_i,\cdot):\mathcal{MF}\rightarrow R_{\geq0}$ be the length function on $\mathcal{MF}$ corresponding to $\{\alpha_1,\alpha_2,...,\alpha_N\}$. Since $\{\alpha_1,\alpha_2,...,\alpha_N\}$ fill up the surface, $l(F)=0$ if and only if $F=0$. Recall a result from \cite{Bonahon}:
\begin{lemm}\label{lemm1}
For any $M>0$, $\{F\in \mathcal{MF}:l(F)\leq M\}$ is compact in $\mathcal{MF}$.
\end{lemm}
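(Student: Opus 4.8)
The plan is to show that the sublevel set $K=\{F\in\mathcal{MF}:l(F)\leq M\}$ is sequentially compact and then to upgrade this to compactness by metrizability. Since $\Phi$ is an embedding, the topology of $\mathcal{MF}$ is simultaneously the topology of pointwise convergence inherited from $R_{\geq0}^{\mathcal{S}}$ and the subspace topology from the Euclidean $R^{N}$; in particular it is metrizable, so sequential compactness suffices. The opening observation is that $l(F)$ is exactly the $\ell^{1}$-norm of $\Phi(F)=\big(i(\alpha_1,F),\dots,i(\alpha_N,F)\big)$ (all coordinates being non-negative), so $\Phi(K)$ is contained in the compact simplex $\{x\in R^{N}:x\geq0,\ \sum_i x_i\leq M\}$ and is therefore bounded. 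The real content is that these finitely many coordinates control every coordinate $i(\beta,F)$, $\beta\in\mathcal{S}$.

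The key step I would carry out first is a uniform comparison bound: for each $\beta\in\mathcal{S}$ there is a constant $C_\beta$ with $i(\beta,F)\leq C_\beta\, l(F)$ for all $F\in\mathcal{MF}$. Because $\{\alpha_1,\dots,\alpha_N\}$ fill up $S$, we have $l(F)>0$ for every $F\neq0$; since $i(\beta,\cdot)$ and $l$ are continuous and homogeneous of degree one, the ratio $i(\beta,F)/l(F)$ descends to a well-defined continuous function on $\mathcal{PMF}$. Invoking that $\mathcal{PMF}$ is compact (it is homeomorphic to $S^{6g-7+2n}$), this function is bounded, which yields $C_\beta$. I expect this comparison estimate to be the main obstacle, and the point requiring care is to avoid circularity: one must use the already-established compactness of $\mathcal{PMF}$ rather than the compactness of sublevel sets of $l$, which is precisely what is being proved.

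With the bound in hand, the remainder is routine. Given any sequence $\{F_m\}\subset K$, the estimate gives $i(\beta,F_m)\leq C_\beta M$ for every $\beta$, so each coordinate sequence is bounded. Since $\mathcal{S}$ is countable, a diagonal extraction produces a subsequence $\{F_{m_k}\}$ along which $i(\beta,F_{m_k})$ converges for every $\beta\in\mathcal{S}$; the pointwise limit is a point $\Lambda\in R_{\geq0}^{\mathcal{S}}$, and $F_{m_k}\to\Lambda$ in the topology of pointwise convergence. As $\mathcal{MF}$ is by definition the closure of $i_{*}(R_{+}\times\mathcal{S})$ in $R_{\geq0}^{\mathcal{S}}$, it is closed there, so $\Lambda=:F_0\in\mathcal{MF}$. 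Passing to the limit in $l(F_{m_k})=\sum_{i}i(\alpha_i,F_{m_k})\leq M$ (equivalently, using continuity of $l$) gives $l(F_0)\leq M$, i.e. $F_0\in K$. Thus every sequence in $K$ has a subsequence converging in $\mathcal{MF}$ to a point of $K$, so $K$ is sequentially compact; by metrizability of $\mathcal{MF}$ this is the desired compactness.
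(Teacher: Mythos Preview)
The paper does not give its own proof of this lemma: it is stated as a result recalled from \cite{Bonahon}, with no argument supplied. So there is nothing in the paper to compare your proof against line by line.

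Your argument is correct. The potential circularity you flag is the only genuine hazard, and you avoid it properly: the compactness of $\mathcal{PMF}$ is established in \cite{FLP} via global coordinates (Dehn--Thurston or train tracks) identifying $\mathcal{PMF}$ with a sphere, independently of any sublevel-set compactness for $l$. With that in hand, your comparison bound $i(\beta,F)\leq C_\beta\, l(F)$ follows exactly as you say, and the diagonal extraction plus closedness of $\mathcal{MF}$ in $R_{\geq0}^{\mathcal S}$ finishes it.

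One remark on efficiency: once you are willing to invoke compactness of $\mathcal{PMF}$, there is a shorter route that bypasses the per-curve bounds and the diagonal argument entirely. The level set $\Sigma=\{F\in\mathcal{MF}:l(F)=1\}$ is the image of the continuous section $[F]\mapsto F/l(F)$ of the projection $\mathcal{MF}-\{0\}\to\mathcal{PMF}$, hence homeomorphic to $\mathcal{PMF}$ and therefore compact. The scaling map $[0,M]\times\Sigma\to K$, $(t,G)\mapsto tG$, is continuous and surjective (every nonzero $F\in K$ equals $l(F)\cdot F/l(F)$, and $0$ is hit by $t=0$), so $K$ is a continuous image of a compact set. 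This is closer in spirit to how such statements are usually packaged in the geodesic-currents literature, and it makes transparent that the compactness of $K$ is nothing more than the compactness of $\mathcal{PMF}$ together with homogeneity.
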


From Lemma \ref{lemm1}, we have
\begin{lemm} \label{lemm2}
$\Phi(\mathcal{MF})$ is closed in $R^{N}$.
\end{lemm}
\begin{proof}
Suppose $\lim_{n\rightarrow\infty}\Phi(F_n)=f_0$ for some sequence $\{F_n\}_{n=1}^{\infty}\subseteq\mathcal{MF}$ and $f_0=(a_1,a_2,...,a_N)\in R^N$. Then $\lim_{n\rightarrow\infty}i(\alpha_i,F_n)=a_i$ for $i=1,2,...,N$. So $l(F_n)=\sum_{i=1}^{N}i(\alpha_i,F_n)\leq M$ for some $M>0$. From Lemma \ref{lemm1}, there exists a subsequence $\{F_{n_k}\}_{k=1}^{\infty}$ such that $\lim_{k\rightarrow\infty}F_{n_k}=F_0$ for some $F_0\in\mathcal{MF}$. Since $\Phi$ is continuous, we have $\Phi(F_0)=f_0$. Thus $f_0\in \Phi(\mathcal{MF})$, which completes the proof.
\end{proof}

Let $\Omega^{'}(\mathcal{MF})\subseteq\Omega(\mathcal{MF})$ be the set of homogeneous continuous maps from $\mathcal{MF}$ to $\mathcal{MF}$. And let $P\Omega^{'}(\mathcal{MF})\subseteq P\Omega(\mathcal{MF})$ be the projective space of $\Omega^{'}(\mathcal{MF})$. Since $Mod(S)$ acts continuously on $\mathcal{MF}$, $E\subseteq P\Omega^{'}(\mathcal{MF})$.

Now we proceed to construct a metric on $P\Omega^{'}(\mathcal{MF})$. Set $\mathcal{MF}_1=\{F\in\mathcal{MF}:l(F)\leq 1\}$. For any $[f]$ in $P\Omega^{'}(\mathcal{MF})$, we define the normalized lift of $[f]$ to $\Omega^{'}(\mathcal{MF})$ by
\begin{equation*}
\widehat{f}(\cdot)=\frac{f(\cdot)}{L(f)}:\mathcal{MF}\rightarrow \mathcal{MF},
\end{equation*}
where $L(f)=\sup_{\mathcal{MF}_1}l(f(\cdot))$. Note that $L(f)$ is finite because of the compactness of $\mathcal{MF}_1$.

Let $d$ be the Euclidean metric on $\mathcal{MF}$ induced by $\Phi$: for any $F,G\in\mathcal{MF}$, $d(F,G)=|\Phi(F)-\Phi(G)|$, where $|\cdot|$ is the Euclidean norm on $R^{N}$. We define a map $\widehat{d}:P\Omega^{'}(\mathcal{MF})\times P\Omega^{'}(\mathcal{MF})\rightarrow R$ as follows: for any $[f],[g]\in P\Omega^{'}(\mathcal{MF})$,
\begin{equation*}
\widehat{d}([f],[g])=\sup_{F\in\mathcal{MF}_1}d(\widehat{f}(F),\widehat{g}(F)).
\end{equation*}
Note that $\widehat{d}$ is a metric on $P\Omega^{'}(\mathcal{MF})$. In fact, the symmetry and the triangle inequality come from these two properties of metric $d$; the positive definiteness comes from the definition of the normalized lift $\widehat{f}$.

For the metric $\widehat{d}$, we have
\begin{lemm}\label{lemm3}
For any $\{[f_n]\}_{n=0}^{\infty}\subseteq P\Omega^{'}(\mathcal{MF})$, $\lim_{n\rightarrow \infty}\widehat{d}([f_n],[f_0])=0$ if and only if there exists a positive sequence $\{t_n\}_{n=1}^{\infty}$ such that $t_nf_n$ converges uniformly to $f_0$ on any compact subset of $\mathcal{MF}$.
\end{lemm}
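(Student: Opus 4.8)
The plan is to exploit two structural features throughout. First, every map in $\Omega^{'}(\mathcal{MF})$ is homogeneous, so it is determined by its restriction to the compact slice $\mathcal{MF}_1$ (in fact to the level set $\{l=1\}$), and uniform control on $\mathcal{MF}_1$ will propagate to all of $\mathcal{MF}$ by rescaling. Second, the length function $l$, being a sum of intersection numbers, is homogeneous of degree one and, under the identification $\mathcal{MF}\cong\Phi(\mathcal{MF})\subseteq R^{N}$, is comparable to the Euclidean metric $d$: since all norms on $R^{N}$ are equivalent, there is a constant $C$ with $|l(F)-l(G)|\leq C\,d(F,G)$. I would first record that the normalized lift $\widehat{f}=f/L(f)$ depends only on the class $[f]$, because $L(kf)=kL(f)$, and that $L(f_0)>0$ whenever $f_0\neq 0$: by homogeneity some $F$ with $l(F)=1$ has $f_0(F)\neq 0$, so $l(f_0(F))>0$. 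With these remarks in hand, $\widehat{d}([f_n],[f_0])\to 0$ is by definition exactly the statement that $\widehat{f_n}\to\widehat{f_0}$ uniformly on $\mathcal{MF}_1$.

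For the direction assuming uniform convergence, suppose $t_nf_n\to f_0$ uniformly on compact sets, in particular on $\mathcal{MF}_1$. The crucial step is that the normalizing constants converge. From $|L(t_nf_n)-L(f_0)|\leq \sup_{\mathcal{MF}_1}|l(t_nf_n(F))-l(f_0(F))|\leq C\,\sup_{\mathcal{MF}_1}d(t_nf_n(F),f_0(F))$, the comparison of $l$ with $d$ gives $L(t_nf_n)\to L(f_0)$. Because $L(f_0)>0$ and $[t_nf_n]=[f_n]$, I would then divide the uniformly convergent sequence $t_nf_n$ by the convergent positive scalars $L(t_nf_n)$ to obtain $\widehat{f_n}=t_nf_n/L(t_nf_n)\to f_0/L(f_0)=\widehat{f_0}$ uniformly on $\mathcal{MF}_1$, i.e. $\widehat{d}([f_n],[f_0])\to 0$. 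The elementary quotient estimate here uses that $f_0$ is bounded on the compact set $\mathcal{MF}_1$.

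For the converse, assume $\widehat{d}([f_n],[f_0])\to 0$, that is, $\widehat{f_n}\to\widehat{f_0}$ uniformly on $\mathcal{MF}_1$. I would simply set $t_n=L(f_0)/L(f_n)$ (legitimate since $L(f_n)>0$), so that $t_nf_n=L(f_0)\widehat{f_n}$ and $f_0=L(f_0)\widehat{f_0}$; then $\sup_{\mathcal{MF}_1}d(t_nf_n(F),f_0(F))=L(f_0)\,\widehat{d}([f_n],[f_0])\to 0$, giving uniform convergence on $\mathcal{MF}_1$. To upgrade to an arbitrary compact $K\subseteq\mathcal{MF}$, I invoke homogeneity: any $F\in K\setminus\{0\}$ factors as $F=l(F)\,F'$ with $F'=F/l(F)\in\{l=1\}\subseteq\mathcal{MF}_1$, while compactness bounds $l(F)\leq M$ on $K$. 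Homogeneity of $t_nf_n$ and $f_0$ together with $d(aX,aY)=a\,d(X,Y)$ then yields $d(t_nf_n(F),f_0(F))=l(F)\,d(t_nf_n(F'),f_0(F'))\leq M\,\sup_{\mathcal{MF}_1}d(t_nf_n,f_0)\to 0$, the point $F=0$ being fixed by all homogeneous maps; hence the convergence is uniform on $K$.

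The two quotient estimates are routine, and the homogeneity rescaling is forced. The one point that demands care is the convergence $L(t_nf_n)\to L(f_0)$ of the normalizing denominators in the first direction: this is precisely where the compactness of $\mathcal{MF}_1$ (Lemma \ref{lemm1}) and the Lipschitz comparison of $l$ with the Euclidean metric $d$ must be used, and where knowing $L(f_0)>0$ is essential in order to divide. Everything else follows from homogeneity, which is what allows a statement about the single compact slice $\mathcal{MF}_1$ to govern the behaviour on all of $\mathcal{MF}$.
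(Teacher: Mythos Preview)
Your proof is correct and follows essentially the same route as the paper's: both directions pass through uniform convergence on $\mathcal{MF}_1$ of the normalized lifts, and in the converse direction both rescale via homogeneity (you use $F/l(F)$, the paper uses $F/l(A)$) to upgrade from $\mathcal{MF}_1$ to an arbitrary compact set. The only substantive difference is that the paper simply asserts that uniform convergence $t_nf_n\to f_0$ on $\mathcal{MF}_1$ ``implies'' $\widehat{t_nf_n}\to\widehat{f_0}$ there, whereas you make explicit the reason---namely $L(t_nf_n)\to L(f_0)>0$ via the Lipschitz comparison $|l(F)-l(G)|\leq C\,d(F,G)$ and the elementary estimate $|\sup a-\sup b|\leq\sup|a-b|$; this added detail is correct and arguably fills a small gap in the paper's exposition.
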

\begin{proof}
	Suppose that $\lim_{n\rightarrow\infty}\widehat{d}([f_n],[f_0])=0$. Then from the definition of metric $\widehat{d}$, we know that $f_n(\cdot)/L(f_n)$ converges uniformly to $f_0(\cdot)/L(f_0)$ on $\mathcal{MF}_1$. For any compact subset $A$ of $\mathcal{MF}$, set $l(A)=\sup_{F\in A}l(F)$. Note that for any $F\in A$, $F/l(A)\in \mathcal{MF}_{1}$. Thus we have $f_n(\cdot)/l(A)L(f_n)$ converges uniformly to $f_0(\cdot)/l(A)L(f_0)$ on $A$, which also implies that $L(f_0)f_n(\cdot)/L(f_n)$ converges uniformly to $f_0(\cdot)$ on $A$.
	
	Conversely, suppose that there exists a sequence $\{t_n\}_{n=1}^{\infty}\subseteq R_{+}$ such that $t_nf_n$ converges uniformly to $f_0$ on any compact subset of $\mathcal{MF}$. In particular, $t_nf_n$ converges uniformly to $f_0$ on $\mathcal{MF}_{1}$, which implies that $\frac{t_nf_n(\cdot)}{L(t_nf_n)}$ converges uniformly to $\frac{f_0(\cdot)}{L(f_0)}$ on $\mathcal{MF}_{1}$. Thus
	\begin{equation*}
		\lim_{n\rightarrow\infty}\hat{d}([f_n],[f_0])=\lim_{n\rightarrow\infty}\sup_{\mathcal{MF}_1}d(\frac{f_n(\cdot)}{L(f_n)},\frac{f_0(\cdot)}{L(f_0)})
		=\lim_{n\rightarrow\infty}\sup_{\mathcal{MF}_1}d(\frac{t_nf_n(\cdot)}{L(t_nf_n)},\frac{f_0(\cdot)}{L(f_0)})=0.
	\end{equation*}
\end{proof}

In the metric space $(P\Omega^{'}(\mathcal{MF}),\widehat{d})$, $E$ is precompact:
\begin{lemm}\label{lemm4}
	For any sequence $\{[f_n]\}_{n=1}^{\infty}\subseteq E$, there exists a subsequence $\{[f_{n_k}]\}_{k=1}^{\infty}$ such that
	$$
	\lim_{k\rightarrow\infty}\widehat{d}([f_{n_k}],[f_0])=0
	$$
	for some $[f_0]\in P\Omega^{'}(\mathcal{MF})$.
\end{lemm}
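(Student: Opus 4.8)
The plan is to run an Arzel\`a--Ascoli argument directly in the metric $\widehat{d}$, exploiting the characterization in Lemma \ref{lemm3}: it suffices to produce a subsequence $\{f_{n_k}\}$ and positive scalars $t_{n_k}$ so that $t_{n_k}f_{n_k}$ converges uniformly on every compact subset of $\mathcal{MF}$ to some nonzero $f_0\in\Omega^{'}(\mathcal{MF})$. Choosing a mapping-class representative $f_n$ for each $[f_n]\in E$, I would normalize from the outset by taking $t_n=1/L(f_n)$ and working with the normalized lift $\widehat{f}_n=f_n/L(f_n)$ (here $L(f_n)$ is finite by compactness of $\mathcal{MF}_1$ and positive since $f_n$ is injective). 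By definition $\sup_{\mathcal{MF}_1}l(\widehat{f}_n)=1$, and in particular $\widehat{f}_n$ maps $\mathcal{MF}_1$ into $\mathcal{MF}_1$, so the family is uniformly bounded.

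The central observation, which I expect to carry the whole argument, is that the $\Phi$-coordinates of $\widehat{f}_n$ are intersection numbers against a family of weighted curves that remains in one fixed compact set. Writing $\beta_i^{(n)}=f_n^{-1}(\alpha_i)$ and using invariance of the intersection number under mapping classes, one has $\Phi(\widehat{f}_n(F))_i=i(\alpha_i,f_nF)/L(f_n)=i(\widetilde{\beta}_i^{(n)},F)$ where $\widetilde{\beta}_i^{(n)}=\beta_i^{(n)}/L(f_n)$. Since $\sum_i i(\widetilde{\beta}_i^{(n)},\cdot)=l(f_n(\cdot))/L(f_n)\leq 1$ on $\mathcal{MF}_1$, testing against the normalized filling curves $\alpha_j/l(\alpha_j)\in\mathcal{MF}_1$ gives $i(\widetilde{\beta}_i^{(n)},\alpha_j)\leq l(\alpha_j)$, and summing over $j$ yields the uniform bound $l(\widetilde{\beta}_i^{(n)})\leq\sum_j l(\alpha_j)=:M_0$, independent of both $n$ and $i$. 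By Lemma \ref{lemm1}, all the $\widetilde{\beta}_i^{(n)}$ then lie in the single compact set $\{l\leq M_0\}$.

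With this in hand the extraction is soft. As there are only finitely many indices $i$, a diagonal choice of subsequence gives $\widetilde{\beta}_i^{(n_k)}\to\gamma_i$ in $\mathcal{MF}$ for each $i$. Uniform continuity of $i(\cdot,\cdot)$ on the compact set $\{l\leq M_0\}\times\mathcal{MF}_1$ upgrades this to uniform convergence $i(\widetilde{\beta}_i^{(n_k)},\cdot)\to i(\gamma_i,\cdot)$ on $\mathcal{MF}_1$, so $\Phi(\widehat{f}_{n_k}(\cdot))\to(i(\gamma_i,\cdot))_{i=1}^{N}$ uniformly on $\mathcal{MF}_1$. Because $\Phi(\mathcal{MF})$ is closed in $R^{N}$ (Lemma \ref{lemm2}), each limit vector lies in $\Phi(\mathcal{MF})$, so the rule $\Phi(f_0(F))=(i(\gamma_i,F))_{i=1}^{N}$ defines a map $f_0:\mathcal{MF}\to\mathcal{MF}$, which is homogeneous and continuous since each $i(\gamma_i,\cdot)$ is and $\Phi$ is an embedding; hence $f_0\in\Omega^{'}(\mathcal{MF})$.

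Finally I would verify $f_0\neq 0$ and conclude. As $l$ is the restriction of a linear functional in $\Phi$-coordinates, uniform convergence of $\widehat{f}_{n_k}$ to $f_0$ on $\mathcal{MF}_1$ forces $\sup_{\mathcal{MF}_1}l(f_0)=\lim_k\sup_{\mathcal{MF}_1}l(\widehat{f}_{n_k})=1$, so $f_0\not\equiv 0$ and $[f_0]\in P\Omega^{'}(\mathcal{MF})$ is well defined. Homogeneity propagates the uniform convergence from $\mathcal{MF}_1$ to every compact subset (rescale each $F$ by $1/l(A)$ on a compact $A$), so $t_{n_k}f_{n_k}=\widehat{f}_{n_k}\to f_0$ uniformly on compacta with $t_{n_k}=1/L(f_{n_k})$, and Lemma \ref{lemm3} gives $\widehat{d}([f_{n_k}],[f_0])\to 0$, as required. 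The only genuinely nontrivial input is the uniform compactness bound $l(\widetilde{\beta}_i^{(n)})\leq M_0$; every step after it is routine.
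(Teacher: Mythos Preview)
Your argument is correct, and it reaches the same endpoint as the paper---convergence of the rescaled pullbacks $t_{n_k}f_{n_k}^{-1}(\alpha_i)$ in $\mathcal{MF}$ and then uniform convergence of $\Phi\circ(t_{n_k}f_{n_k})$ via continuity of $i(\cdot,\cdot)$---but the route to the key compactness bound is genuinely different. The paper detours through Teichm\"uller space: it picks $x_0\in\mathcal{T}(S)$, uses proper discontinuity and the Thurston compactification to arrange $f_{n_k}(x_0)\to[F_0]\in\mathcal{PMF}$, and reads off the scalars $t_k$ and the bound $l(x_0,t_kf_{n_k}^{-1}(\alpha_i))\leq M$ from that convergence; nonvanishing of the limit then comes from $l(F_0)>0$. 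You stay entirely inside $\mathcal{MF}$: your scalars are the intrinsic normalizers $1/L(f_n)$, your bound $l(\widetilde{\beta}_i^{(n)})\leq\sum_j l(\alpha_j)$ follows from one clean test against $\alpha_j/l(\alpha_j)\in\mathcal{MF}_1$, and nonvanishing comes from $L(\widehat{f}_{n_k})\equiv1$ passing to the uniform limit. Your approach is more self-contained (it uses only Lemmas \ref{lemm1}--\ref{lemm3} and the filling property, never hyperbolic length or the Thurston boundary) and makes the Arzel\`a--Ascoli structure transparent; the paper's approach, on the other hand, foreshadows the later link between $\partial E$ and the Thurston boundary that is exploited in Section~\ref{sec4}.
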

\begin{proof}
	From the definition of $E$, we assume that $f_n\in Mod(S)$ ($n=1,2,...$). Take a point $x_0\in\mathcal{T}(S)$. Note that the action of $Mod(S)$ on $\mathcal{T}(S)$ is properly discontinuous. Thus by the definition of the Thurston compactification of $\mathcal{T}(S)$, one of the followings holds:
	\\(1) $f_{n_k}\equiv f_0$ for some subsequence $\{f_{n_k}\}_{k=1}^{\infty}$ and $f_0\in Mod(S)$;
	\\(2) $\lim_{k\rightarrow\infty}f_{n_k}(x_0)=[F_0]$ for some subsequence $\{f_{n_k}\}_{k=1}^{\infty}$ and $[F_0]\in \mathcal{PMF}$.
	
	For the case (1), $\widehat{d}([f_{n_k}],[f_0])\equiv 0.$
	
	For the case (2), there exists a sequence of positive numbers $\{t_k\}_{k=1}^{\infty}$ such that for any $F\in\mathcal{MF}$,
	$$
	\lim_{k\rightarrow\infty}t_kl(x_0,f^{-1}_{n_k}(F))=\lim_{k\rightarrow\infty}t_kl(f_{n_k}(x_0),F)=i(F_0,F).
	$$
	In particular, we have $\lim_{k\rightarrow\infty}l(x_0,t_kf^{-1}_{n_k}(\alpha_i))=i(F_0,\alpha_i)$ for $i=1,2,...,N$. Since $\{\alpha_i\}_{i=1}^{N}$ fill up the surface, we have $l(F_0)=\sum_{i=1}^{N}i(\alpha_i,F_0)>0$, which implies that $m\leq \sum_{i=1}^{N}l(x_0,t_kf^{-1}_{n_k}(\alpha_i))\leq M$ for some $m,M>0$. Note that $\{F\in\mathcal{MF}:l(x_0,F)\leq M\}$ is compact. Thus passing to a subsequence again, we assume that $\lim_{k\rightarrow\infty}t_kf^{-1}_{n_k}(\alpha_i)=F_i$ for some $F_i\in\mathcal{MF}$ and $F_{i_0}\neq 0$ for some $i_0$.
	
	By the definition of $\Phi$, we have
	\begin{equation*}
		t_k\Phi\circ f_{n_k}(\cdot)=\big(i(\alpha_i,t_kf_{n_k}\cdot)\big)_{i=1}^{N}=\big(i(t_kf^{-1}_{n_k}(\alpha_i),\cdot)\big)_{i=1}^{N}.
	\end{equation*}
	Since $i(\cdot,\cdot):\mathcal{MF}\times\mathcal{MF}\rightarrow R_{\geq0}$ is continuous and $\lim_{k\rightarrow\infty}t_kf^{-1}_{n_k}(\alpha_i)=F_i$, we know that $t_k\Phi\circ f_{n_k}(\cdot)=\big(i(t_kf^{-1}_{n_k}(\alpha_i),\cdot)\big)_{i=1}^{N}$ converges uniformly to $\big(i(F_i,\cdot)\big)_{i=1}^{N}\neq 0$ on any compact subset of $\mathcal{MF}$. By Lemma \ref{lemm2}, for any $F\in\mathcal{MF}$, $\big(i(F_i,F)\big)_{i=1}^{N}=\lim_{k\rightarrow\infty}\big(i(\alpha_i,t_kf_{n_k}(F))\big)_{i=1}^{N}\in \Phi(\mathcal{MF})$, which implies that $f_0=\Phi^{-1}\big(i(F_i,\cdot)\big)_{i=1}^{N}$ is a homogeneous continuous map from $\mathcal{MF}$ to $\mathcal{MF}$. Since $\Phi$ is a homeomorphism, $t_kf_{n_k}$ converges uniformly to $f_0$ on any compact subset of $\mathcal{MF}$. By Lemma \ref{lemm3}, $\lim_{k\rightarrow\infty}\widehat{d}([f_{n_k}],[f_0])=0$.
\end{proof}

By Lemma \ref{lemm3} and Lemma \ref{lemm4}, we prove Theorem \ref{main} now.
\\\textbf{Proof of Theorem \ref{main}.}
Firstly, we prove that $Cl(E)\subseteq P\Omega^{'}(\mathcal{MF})$. Naturally, $E\subseteq P\Omega^{'}(\mathcal{MF})$. Suppose that a sequence $[f_n]\in E$ converges to $[f_0]\in Cl(E)$ in $P\Omega(\mathcal{MF})$. By Lemma \ref{lemm4}, there exists a subsequence $\{[f_{n_k}]\}_{k=1}^{\infty}$ such that
$\lim_{k\rightarrow\infty}\widehat{d}([f_{n_k}],[f^{'}_0])=0$ for some $[f^{'}_{0}]\in P\Omega^{'}(\mathcal{MF})$. By Lemma \ref{lemm3}, there exists a sequence of positive numbers $\{t_k\}_{k=1}^{\infty}$ such that $t_kf_{n_k}$ converges uniformly to $f^{'}_0$ on any compact subset of $\mathcal{MF}$. Since the uniform convergence on compact sets is stronger than the pointwise convergence, $t_kf_{n_k}$ converges to $f^{'}_0$ in the topology of pointwise convergence.
By the topology of $P\Omega(\mathcal{MF})$, $[f_{n_k}]$ converges to $[f^{'}_{0}]$ in $P\Omega(\mathcal{MF})$, which implies that $[f_0]=[f^{'}_{0}]\in P\Omega^{'}(\mathcal{MF})$. Thus $Cl(E)\subseteq P\Omega^{'}(\mathcal{MF})$.

Secondly, we prove that the topology on $Cl(E)$ is coincident with  that induced by the metric $\widehat{d}$, that is, $Cl(E)$ is metrizable. From Lemma \ref{lemm3} and the fact that the uniform convergence on compact sets is stronger than the pointwise convergence, we know that for any sequence $\{[f_n]\}^{\infty}_{n=0}$ in $Cl(E)$, $\lim_{n\rightarrow\infty}\widehat{d}([f_n],[f_0])$ implies that $[f_n]$ converges to $[f_0]$ in the topology of $Cl(E)$. For the inverse direction, suppose that  $[f_n]$ converges to $[f_0]$ in the topology of $Cl(E)$. We wish to prove that $\lim_{n\rightarrow\infty}\widehat{d}([f_n],[f_0])=0$. We prove this by contradiction. Suppose that there exists a subsequence $\{[f_{n_k}]\}_{k=1}^{\infty}$ such that $\widehat{d}([f_{n_k}],[f_0])>\varepsilon$ for some $\varepsilon>0$. Then by Lemma \ref{lemm4}, passing to a subsequence again, we can assume that $\lim_{k\rightarrow \infty}\widehat{d}([f_{n_k}],[f^{'}_0])=0$ for some $[f^{'}_{0}]\in P\Omega^{'}(\mathcal{MF})$, which implies that $[f^{'}_0]\in Cl(E)$ and  $[f_{n_k}]$ converges to $[f^{'}_{0}]$ in the topology of $Cl(E)$. Thus $[f^{'}_0]=[f_0]$. By
$$
0=\lim_{k\rightarrow \infty}\widehat{d}([f_{n_k}],[f^{'}_0])=\lim_{k\rightarrow \infty}\widehat{d}([f_{n_k}],[f_0])\geq\varepsilon,
$$
we get a contradiction.

Finally, by Lemma \ref{lemm4}, as a dense subset of metric space $(Cl(E),\widehat{d})$, $E$ is precompact. Thus $Cl(E)$ is compact.\qed

By the proof of Theorem \ref{main}, we have two useful corollaries.
\begin{coro}\label{coro1}
The boundary point set $\partial E=Cl(E)-E$ is included in $P\Omega^{'}(\mathcal{MF})$, that is, any boundary point $p$ can be represented by $p=[f_p]$, where $f_p$ is a homogeneous continuous map from $\mathcal{MF}$ to $\mathcal{MF}$.
\end{coro}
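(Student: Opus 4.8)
The plan is to observe that this corollary is already contained in the very first step of the proof of Theorem~\ref{main}, where the inclusion $Cl(E)\subseteq P\Omega^{'}(\mathcal{MF})$ was established. Once that inclusion is in hand, the corollary follows immediately by intersecting with the complement of $E$ and unwinding the definition of $P\Omega^{'}(\mathcal{MF})$ as a space of homogeneous continuous maps. So the whole task reduces to recalling why every limit point of $E$ is continuous rather than merely measurable.

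Concretely, I would take an arbitrary $p\in\partial E$ and write $p=[f_0]$ as the limit in $P\Omega(\mathcal{MF})$ of a sequence $[f_n]\in E$. By Lemma~\ref{lemm4}, after passing to a subsequence $[f_{n_k}]$ there is some $[f_0^{'}]\in P\Omega^{'}(\mathcal{MF})$ with $\lim_{k\to\infty}\widehat{d}([f_{n_k}],[f_0^{'}])=0$, the limit map being identified via the intersection-number formula $f_0^{'}=\Phi^{-1}\big(i(F_i,\cdot)\big)_{i=1}^{N}$. By Lemma~\ref{lemm3} this means that, for suitable positive scalars $t_k$, the maps $t_kf_{n_k}$ converge to $f_0^{'}$ uniformly on every compact subset of $\mathcal{MF}$. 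The one point requiring care is the comparison of the two topologies in play: the convergence just produced is with respect to the metric $\widehat{d}$, whereas $Cl(E)$, and hence $\partial E$, carries the subspace topology inherited from the pointwise-convergence topology of $P\Omega(\mathcal{MF})$. Since uniform convergence on compact sets is stronger than pointwise convergence, $t_kf_{n_k}$ also converges to $f_0^{'}$ pointwise, so $[f_{n_k}]\to[f_0^{'}]$ in $P\Omega(\mathcal{MF})$; as $[f_{n_k}]$ is a subsequence of $[f_n]\to[f_0]$, this forces $[f_0]=[f_0^{'}]\in P\Omega^{'}(\mathcal{MF})$. Because $p=[f_0]$ was arbitrary and $\partial E=Cl(E)-E\subseteq Cl(E)$, we conclude $\partial E\subseteq P\Omega^{'}(\mathcal{MF})$, and the representative $f_p:=f_0^{'}$ is then exactly a homogeneous continuous map from $\mathcal{MF}$ to $\mathcal{MF}$.

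I do not expect any substantial obstacle here, since all the genuine work—the precompactness supplied by Lemma~\ref{lemm4} and the identification of the limiting map through the continuity of the intersection pairing together with Lemma~\ref{lemm2}—has already been carried out in proving Theorem~\ref{main}. The only subtlety worth flagging explicitly is the topology reconciliation above: it is precisely the passage from $\widehat{d}$-convergence to pointwise convergence that guarantees the boundary representative $f_p$ lands in $\Omega^{'}(\mathcal{MF})$ and is therefore continuous, rather than being only a general measurable element of $\Omega(\mathcal{MF})$.
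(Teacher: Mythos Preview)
Your proposal is correct and follows exactly the paper's approach: the paper states this corollary without a separate proof, deriving it directly from the first step of the proof of Theorem~\ref{main}, where the inclusion $Cl(E)\subseteq P\Omega^{'}(\mathcal{MF})$ is established via Lemmas~\ref{lemm3} and~\ref{lemm4} together with the observation that uniform convergence on compacta implies pointwise convergence. Your account of that argument, including the topology reconciliation, matches the paper's reasoning precisely.
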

\begin{coro}\label{coro2}
For any sequence $\{[f_n]\}_{n=0}^{\infty}$ in $Cl(E)$, the followings are equivalent:
\\(1) $\lim_{n\rightarrow\infty}[f_n]=[f_0]$;
\\(2) there exists a sequence of positive numbers $\{t_n\}_{n=1}^{\infty}$ such that $t_nf_n$ converges to $f_0$ in the topology of pointwise convergence;
\\(3) there exists a sequence of positive numbers $\{t_n\}_{n=1}^{\infty}$ such that $t_nf_n$ converges uniformly to $f_0$ on any compact subset of $\mathcal{MF}$.
\end{coro}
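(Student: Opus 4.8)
The plan is to prove the three-way equivalence by establishing the cycle of implications $(1)\Rightarrow(3)\Rightarrow(2)\Rightarrow(1)$. The key observation is that almost all of the work has already been carried out in the proof of Theorem \ref{main}: that proof shows both that $Cl(E)\subseteq P\Omega^{'}(\mathcal{MF})$ (recorded as Corollary \ref{coro1}) and that the subspace topology on $Cl(E)$ inherited from $P\Omega(\mathcal{MF})$ coincides with the metric topology of $\widehat{d}$. Consequently every term $[f_n]$, together with the limit $[f_0]$, lies in $P\Omega^{'}(\mathcal{MF})$, so that Lemma \ref{lemm3} applies verbatim to the whole sequence. The corollary is then essentially a repackaging of Theorem \ref{main} and Lemma \ref{lemm3}.

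For $(1)\Rightarrow(3)$, I would use the metrizability assertion of Theorem \ref{main}: if $\lim_{n\to\infty}[f_n]=[f_0]$ in the topology of $Cl(E)$, then $\lim_{n\to\infty}\widehat{d}([f_n],[f_0])=0$. Since all the projective classes involved lie in $P\Omega^{'}(\mathcal{MF})$, Lemma \ref{lemm3} produces a positive sequence $\{t_n\}$ with $t_nf_n$ converging uniformly to $f_0$ on every compact subset of $\mathcal{MF}$, which is exactly $(3)$. The implication $(3)\Rightarrow(2)$ is immediate, since uniform convergence on compact subsets forces convergence at each single point, i.e. pointwise convergence, with the same rescaling sequence $\{t_n\}$. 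Finally, for $(2)\Rightarrow(1)$, I would invoke the description of the quotient topology on $P\Omega(\mathcal{MF})$ given in Section \ref{sec2}: by definition $[f_n]\to[f_0]$ there precisely when some rescaling $t_nf_n$ converges to $f_0$ pointwise, so $(2)$ says exactly that $[f_n]\to[f_0]$ in $P\Omega(\mathcal{MF})$; as $Cl(E)$ carries the subspace topology and all the points belong to $Cl(E)$, this is convergence in $Cl(E)$, namely $(1)$.

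I do not anticipate a genuine obstacle, as no new ideas are needed beyond the two results just cited. The one point requiring a little care is verifying that Lemma \ref{lemm3} is legitimately applicable to the given sequence, i.e. that each $[f_n]$ really lies in $P\Omega^{'}(\mathcal{MF})$ rather than merely in $P\Omega(\mathcal{MF})$; this is precisely what the inclusion $Cl(E)\subseteq P\Omega^{'}(\mathcal{MF})$ from Corollary \ref{coro1} guarantees, so the argument goes through without further hypotheses.
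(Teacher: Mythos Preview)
Your proposal is correct and follows essentially the same approach as the paper, which simply records the corollary as an immediate consequence of the proof of Theorem \ref{main} without spelling out the details. You have made explicit precisely the ingredients the paper is implicitly invoking: the inclusion $Cl(E)\subseteq P\Omega^{'}(\mathcal{MF})$, the coincidence of the subspace topology with the $\widehat{d}$-metric topology, and Lemma \ref{lemm3}.
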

Corollary \ref{coro2} means that in $Cl(E)$, the pointwise convergence and the uniform convergence on compact sets are equivalent, which is not true in general.

\section{The structure of the boundary} \label{sec3}
In this section, we study the structure of the boundary $\partial E=Cl(E)-E$ in details. Recall that we endow $P\Omega(\mathcal{MF})$ with the quotient topology from the pointwise convergence on $\Omega(\mathcal{MF})$. $Cl(E)$ is the closure of $E$ in this topology.

\begin{prop} \label{prop1}
In $Cl(E)$, $E$ is discrete and $\partial E$ is closed.
\end{prop}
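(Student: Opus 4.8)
The plan is to separate $E$ from $\partial E$ by a projectively invariant self-intersection test and to combine it with the dichotomy established in the proof of Lemma~\ref{lemm4}. I would first call a class $[f]\in Cl(E)\subseteq P\Omega'(\mathcal{MF})$ \emph{degenerate} if $i(f(F),f(G))=0$ for all $F,G\in\mathcal{MF}$; since $i(kf(F),kf(G))=k^{2}i(f(F),f(G))$ for $k>0$, degeneracy depends only on the projective class and so is well defined on $P\Omega'(\mathcal{MF})$. The easy half is that no point of $E$ is degenerate: if $f_0\in Mod(S)$ and $i(F,G)>0$, then $i(f_0(F),f_0(G))=i(F,G)>0$ because mapping classes preserve geometric intersection numbers.

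The main difficulty lies in the opposite statement, that every boundary point is degenerate. Given $[h]\in\partial E$, I would pick $f_n\in Mod(S)$ with $[f_n]\to[h]$. Since $[h]\notin E$, no subsequence of $\{f_n\}$ can be eventually constant (such a subsequence would converge to a point of $E$), so by the dichotomy in the proof of Lemma~\ref{lemm4} every subsequence falls into case~(2): after passing to a subsequence there are $t_n>0$ with $t_nf_n\to h$ uniformly on compact sets while $f_n(x_0)$ leaves every compact subset of $\mathcal{T}(S)$. The crux is to show $t_n\to0$. If it did not, then after a further subsequence $t_n\to c$ with $0<c<\infty$; here the case $t_n\to\infty$ is excluded because some $i(\alpha_j,f_n(\alpha_i))$ is a positive integer, giving $l(f_n(\alpha_i))\geq1$ and hence $f_n(\alpha_i)\not\to0$. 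Thus $f_n\to c^{-1}h$ pointwise, so each $f_n(\alpha_i)$ converges in $\mathcal{MF}$; by finiteness of simple closed curves of bounded length (via Lemma~\ref{lemm1}) the curves $f_n(\alpha_i)$ would be eventually constant, confining $\{f_n\}$ to the finite set of mapping classes realizing a fixed image of the filling system $\{\alpha_i\}$, contradicting that $f_n(x_0)$ escapes to infinity. Once $t_n\to0$ is secured, $i(h(F),h(G))=\lim_n t_n^{2}\,i(f_n(F),f_n(G))=\lim_n t_n^{2}\,i(F,G)=0$ for all $F,G$, so $[h]$ is degenerate. I expect this scaling control to be the only genuinely delicate point.

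With these two facts both conclusions should follow quickly. For closedness of $\partial E$ (equivalently openness of $E$ in $Cl(E)$) I would take $[f_n]\to[f_0]$ in $Cl(E)$ with $[f_0]\in E$ and choose $F,G$ with $i(f_0(F),f_0(G))>0$; by Corollary~\ref{coro2} there are $t_n>0$ with $t_nf_n\to f_0$ uniformly on compact sets, so $t_n^{2}\,i(f_n(F),f_n(G))\to i(f_0(F),f_0(G))>0$ by continuity of $i$, whence $i(f_n(F),f_n(G))>0$ for large $n$; then $[f_n]$ is non-degenerate and therefore lies in $E$, so $[f_0]$ has a neighbourhood in $Cl(E)$ contained in $E$. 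For discreteness it then remains to see that each $[f_0]\in E$ is isolated: by the previous step any sequence in $Cl(E)$ converging to $[f_0]$ eventually lies in $E$, so I may assume $f_n\in Mod(S)$ with $[f_n]\to[f_0]$, and applying the dichotomy of Lemma~\ref{lemm4} to a subsequence, case~(2) is impossible because it yields a degenerate limit while $[f_0]$ is not, so case~(1) holds and a further subsequence equals $[f_0]$; since $Cl(E)$ is metrizable this forces $[f_n]=[f_0]$ for all large $n$.
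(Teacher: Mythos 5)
Your argument is correct, but it follows a genuinely different route from the paper's. The paper proves discreteness directly: if distinct $f_n\in Mod(S)$ had $[f_n]\to[f]\in E$, then reading off hyperbolic length functions from $t_nf_n\to f$ gives $f_n^{-1}(x_0)\to f^{-1}(x_0)$ in $\mathcal{T}(S)$, contradicting proper discontinuity; closedness of $\partial E$ is then the one-line consequence of $E$ being open. You instead introduce the projective invariant of degeneracy ($i(f(F),f(G))\equiv 0$) and show it exactly separates $\partial E$ from $E$; the whole weight falls on proving that the normalizing scalars satisfy $t_n\to 0$ at a boundary point, which is precisely the content of the paper's later Lemma~\ref{lemm7} (and of the opening claim in the proof of Theorem~\ref{main2}). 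Where the paper gets $t_n\to 0$ from the Thurston compactification and proper discontinuity, you get it from integrality: $\Phi$ sends simple closed curves to integer lattice points, so if $t_n\not\to 0$ the images $f_n(\alpha_i)$ of the filling system would stabilize and confine $\{f_n\}$ to finitely many mapping classes. Your route is longer for this proposition but yields the degeneracy of boundary maps and Lemma~\ref{lemm7} as by-products; the paper's is shorter but postpones those facts.

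One assertion deserves a justification: that the set of mapping classes carrying the filling system $\{\alpha_i\}$ to a fixed tuple of curves is finite. This is true but not free; it is the finiteness of the pointwise stabilizer of a filling curve system, obtained e.g.\ from the Alexander method, or by noting that such a class preserves the compact minimum set of the proper function $x\mapsto\sum_i l(x,\alpha_i)$ and invoking proper discontinuity. (Alternatively, once each $f_n(\alpha_i)$ is eventually constant you could bypass this by observing that $f_n^{-1}(x_0)$ then stays in a fixed compact subset of $\mathcal{T}(S)$, again contradicting proper discontinuity.) Either way, proper discontinuity is still doing the work underneath, so your proof is not logically more elementary than the paper's, only differently organized.
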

\begin{proof}
For any $[f]$ in $E$, if $[f]$ is not an isolated point in $Cl(E)$, then there exists a sequence $\{[f_n]\}_{n=1}^{\infty}$ in $E$ such that $f_n\neq f_m$ for $n\neq m$ and $\lim_{n\rightarrow\infty}[f_n]=[f]$. So there exists a sequence of positive numbers $\{t_n\}_{n=1}^{\infty}$ such that $\lim_{n\rightarrow\infty}t_nf_n=f$. Choosing a point $x_0$ in $\mathcal{T}(S)$, we have
\begin{equation*}
\lim_{n\rightarrow\infty}t_nl(f_{n}^{-1}(x_0),\cdot)=\lim_{n\rightarrow\infty}l(x_0,t_nf_n(\cdot))=l(x_0,f(\cdot))
=l(f^{-1}(x_0),\cdot).
\end{equation*}
Thus $f_n^{-1}(x_0)$ converges to $f^{-1}(x_0)$ in $\mathcal{T}(S)$, which contradicts the properly discontinuity of the action of $Mod(S)$ on $\mathcal{T}(S)$. Thus any point of $E$ is isolated in $Cl(E)$, which implies that $E$ is discrete in $Cl(E)$.

Since $E$ is discrete in $Cl(E)$, $E$ is open in $Cl(E)$. Thus $\partial E$ is closed in $Cl(E)$.
\end{proof}

The operations of multiplication and inverse on $Mod(S)$ extend continuously to $Cl(E)$. For this, we need some notations. Let $\widetilde{E}=\pi^{-1}(E)$ be the inverse image of $E$ in $\Omega(\mathcal{MF})$ and $Cl(\widetilde{E})$ be the closure of $\widetilde{E}$ in $\Omega(\mathcal{MF})$. By Corollary \ref{coro1}, $Cl(\widetilde{E})\subseteq \Omega^{'}(\mathcal{MF})$. Similar to Corollary \ref{coro2}, we have
\begin{coro}\label{coro3}
For any sequence $\{f_n\}_{n=0}^{\infty}$ in $Cl(\widetilde{E})-\{0\}$, the followings are equivalent:
\\(1) $\lim_{n\rightarrow \infty}f_n=f_0$, that is, $f_n$ converges to $f_0$ in the topology of pointwise convergence;
\\(2) $f_n$ converges uniformly to $f_0$ on any compact subsets of $\mathcal{MF}$.
\end{coro}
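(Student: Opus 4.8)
The implication (2) $\Rightarrow$ (1) is immediate, since a single point is compact and uniform convergence on compact sets in particular forces pointwise convergence. The entire content lies in (1) $\Rightarrow$ (2), and my plan is to reduce it to Corollary \ref{coro2} while carefully tracking the projective scaling factors. First I would observe that, since $\pi$ is continuous and $\pi(\widetilde{E})=E$, the projections $[f_n]=\pi(f_n)$ all lie in $Cl(E)$; and because $f_n\to f_0$ pointwise, taking the trivial scalars $t_n\equiv 1$ shows $[f_n]\to[f_0]$ in $P\Omega(\mathcal{MF})$. By Corollary \ref{coro1} every element of $Cl(\widetilde{E})$ is a homogeneous continuous map, so all the $f_n$ lie in $\Omega^{'}(\mathcal{MF})$ and the distances $d(f_n(\cdot),f_0(\cdot))$ are well defined.

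Applying the equivalence (1) $\Leftrightarrow$ (3) of Corollary \ref{coro2} to the sequence $\{[f_n]\}$ in $Cl(E)$, with the given representatives $f_n$ and $f_0$, there exist positive numbers $s_n$ with $s_nf_n\to f_0$ uniformly on every compact subset of $\mathcal{MF}$. The key step is then to show that in fact $s_n\to 1$, so that this extra rescaling is asymptotically trivial. This is precisely where the hypothesis $f_0\neq 0$ (that is, the passage to $Cl(\widetilde{E})-\{0\}$) enters: I would choose $F^{*}\in\mathcal{MF}$ with $f_0(F^{*})\neq 0$. From (1) we have $f_n(F^{*})\to f_0(F^{*})$, while the uniform convergence gives $s_nf_n(F^{*})\to f_0(F^{*})$ as well. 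Using the homogeneity of $\Phi$, so that $|\Phi(s_nf_n(F^{*}))|=s_n|\Phi(f_n(F^{*}))|$, together with $|\Phi(f_0(F^{*}))|\neq 0$, I would divide to conclude $s_n=|\Phi(s_nf_n(F^{*}))|/|\Phi(f_n(F^{*}))|\to 1$.

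Finally, knowing $s_n\to 1$ and $s_nf_n\to f_0$ uniformly on a compact set $A$, I would estimate, for $F\in A$, using $f_n(F)=\tfrac{1}{s_n}(s_nf_n)(F)$ and the homogeneity of $\Phi$,
\begin{equation*}
d(f_n(F),f_0(F))\leq \Big|\tfrac{1}{s_n}-1\Big|\,|\Phi(s_nf_n(F))|+d(s_nf_n(F),f_0(F)).
\end{equation*}
The second term tends to $0$ uniformly on $A$ by construction; in the first term $\sup_{F\in A}|\Phi(s_nf_n(F))|$ stays bounded, since it lies within $\sup_{F\in A}d(s_nf_n(F),f_0(F))$ of the bounded quantity $\sup_{F\in A}|\Phi(f_0(F))|$, while $|\tfrac{1}{s_n}-1|\to 0$, so this term also vanishes uniformly. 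Hence $f_n\to f_0$ uniformly on $A$, which is (2).

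I expect the only genuine obstacle to be the scalar-pinning step $s_n\to 1$: Corollary \ref{coro2} controls the $f_n$ only up to projective equivalence, whereas Corollary \ref{coro3} is exactly the sharper, non-projective assertion in which no rescaling is permitted. The device that resolves this is evaluation at a single point $F^{*}$ where the limit is nonzero, which is available precisely because $0$ has been removed from $Cl(\widetilde{E})$; everything else is a routine consequence of the homogeneity of the intersection-number embedding $\Phi$ and of uniform convergence on compacta.
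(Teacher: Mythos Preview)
Your proof is correct and follows essentially the same route as the paper's: both invoke Corollary~\ref{coro2} to obtain scalars $s_n$ with $s_nf_n\to f_0$ uniformly on compacta, then argue $s_n\to 1$ from the simultaneous pointwise convergence of $f_n$ and $s_nf_n$ to $f_0$, and finally undo the scaling. The paper compresses your last two steps into a single line (``which implies that $\lim_{n\to\infty}t_n=1$; therefore $f_n=\frac{t_nf_n}{t_n}$ converges uniformly to $f_0$''), whereas you spell out the evaluation at $F^{*}$ and the triangle-inequality estimate explicitly, but there is no substantive difference.
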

\begin{proof}
Since the uniform convergence on compact sets is stronger than the pointwise convergence, (2) implies (1).

For the inverse direction, suppose that $f_n$ converges to $f_0$ in the topology of pointwise convergence. Then by Corollary \ref{coro2},  there exists a sequence of positive numbers $\{t_n\}_{n=1}^{\infty}$ such that $t_nf_n$ converges uniformly to $f_0$ on any compact subset of $\mathcal{MF}$. In particular, $t_nf_n$ converges to $f_0$ in the topology of pointwise convergence, which implies that $\lim_{n\rightarrow\infty}t_n=1$. Therefore, $f_n=\frac{t_nf_n}{t_n}$ converges uniformly to $f_0$ on any compact subsets of $\mathcal{MF}$.
\end{proof}
By Corollary \ref{coro3}, we have
\begin{prop} \label{prop2}
	The map $M:Cl(\widetilde{E})-\{0\}\times Cl(\widetilde{E})-\{0\}\rightarrow \Omega^{'}(\mathcal{MF})$ defined by $(f,g)\mapsto f\circ g$ is continuous. And for any $f,g\in Cl(\widetilde{E})-\{0\}$, $f\circ g\in Cl(\widetilde{E})$.
\end{prop}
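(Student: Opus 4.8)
The plan is to prove the two assertions of Proposition \ref{prop2} in turn, relying crucially on Corollary \ref{coro3} to convert statements about pointwise convergence (the native topology on $\Omega(\mathcal{MF})$) into statements about uniform convergence on compact sets, which behaves well under composition. First I would establish continuity of $M$. Let $(f_n,g_n)\to (f,g)$ in $(Cl(\widetilde{E})-\{0\})\times(Cl(\widetilde{E})-\{0\})$; I want $f_n\circ g_n\to f\circ g$ pointwise. Fix $F\in\mathcal{MF}$. The idea is the standard two-term split
\begin{equation*}
d\big(f_n(g_n(F)),f(g(F))\big)\leq d\big(f_n(g_n(F)),f_n(g(F))\big)+d\big(f_n(g(F)),f(g(F))\big).
\end{equation*}
The second term tends to $0$ because $f_n\to f$ pointwise, evaluated at the single point $g(F)$. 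For the first term I would use Corollary \ref{coro3}: since $g_n\to g$ pointwise in $Cl(\widetilde{E})-\{0\}$, the convergence is in fact uniform on compact sets, so $g_n(F)\to g(F)$ and the points $g_n(F)$ all lie in a compact set $A$ together with $g(F)$. Again by Corollary \ref{coro3}, $f_n\to f$ uniformly on the compact set $A$, so $d(f_n(g_n(F)),f_n(g(F)))\to 0$ provided I can also control the modulus of continuity of the limit map $f$ as $g_n(F)\to g(F)$; here I would combine the uniform convergence $f_n\to f$ on $A$ with the continuity of $f\in\Omega^{'}(\mathcal{MF})$ (Corollary \ref{coro1}) on the compact set $A$.

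The main obstacle is exactly this first term, and it is worth being careful: $f_n$ need not be equicontinuous, so I cannot directly bound $d(f_n(g_n(F)),f_n(g(F)))$ using a uniform modulus of continuity of the $f_n$. The clean way around this is a three-term estimate rather than two: write
\begin{equation*}
d\big(f_n(g_n(F)),f_n(g(F))\big)\leq d\big(f_n(g_n(F)),f(g_n(F))\big)+d\big(f(g_n(F)),f(g(F))\big),
\end{equation*}
where the first piece is $\leq \sup_{G\in A}d(f_n(G),f(G))\to 0$ by uniform convergence of $f_n\to f$ on the compact set $A$ (containing all $g_n(F)$), and the second piece tends to $0$ because $f$ is continuous and $g_n(F)\to g(F)$. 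This avoids equicontinuity of the $f_n$ entirely, at the cost of invoking uniform convergence on $A$ and continuity of the single limit map $f$. I would remark that for this to work I must verify that $\{g_n(F)\}\cup\{g(F)\}$ is contained in a compact subset of $\mathcal{MF}$, which follows since $g_n(F)\to g(F)$ is a convergent sequence with its limit.

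For the second assertion, that $f\circ g\in Cl(\widetilde{E})$ whenever $f,g\in Cl(\widetilde{E})-\{0\}$, I would argue by approximation. Choose sequences $f_n,g_n\in\widetilde{E}$ with $f_n\to f$ and $g_n\to g$ in $\Omega(\mathcal{MF})$; these exist since $\widetilde{E}$ is dense in $Cl(\widetilde{E})$, and I may take them in $\widetilde{E}-\{0\}$ since $f,g\neq 0$ and the topology is pointwise convergence. Because $\widetilde{E}=\pi^{-1}(E)$ consists of (positive multiples of) genuine mapping-class-group actions, each $f_n\circ g_n$ is again a positive multiple of an element of $\widetilde{E}$, hence lies in $\widetilde{E}$ (using that $E$ is the image of the group $Mod(S)$, which is closed under composition). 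By the continuity of $M$ just proved, $f_n\circ g_n\to f\circ g$, and since each $f_n\circ g_n\in\widetilde{E}$, the limit $f\circ g$ lies in the closure $Cl(\widetilde{E})$. The one point I would flag as needing a short justification is that $f\circ g\neq 0$ is \emph{not} being asserted here—the statement only claims $f\circ g\in Cl(\widetilde{E})$, which may land on $0$, so no nonvanishing argument is required and the density approximation closes the proof.
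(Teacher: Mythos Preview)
Your approach is essentially the same as the paper's, but there is a slip in your second displayed triangle inequality: the left-hand side $d\big(f_n(g_n(F)),f_n(g(F))\big)$ is not bounded by the right-hand side you wrote, since that right-hand side is a triangle-inequality chain ending at $f(g(F))$, not at $f_n(g(F))$. What you presumably intend---and what the paper in fact does---is to abandon the first split and bound $d\big(f_n(g_n(F)),f(g(F))\big)$ directly via the intermediate point $f(g_n(F))$:
\begin{equation*}
d\big(f_n(g_n(F)),f(g(F))\big)\le d\big(f_n(g_n(F)),f(g_n(F))\big)+d\big(f(g_n(F)),f(g(F))\big),
\end{equation*}
controlling the first term by uniform convergence of $f_n\to f$ on a compact set $B$ containing all $g_n(F)$ (Corollary \ref{coro3}), and the second by continuity of $f$. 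With this correction your initial split through $f_n(g(F))$ becomes superfluous scaffolding, and your argument coincides with the paper's. Your treatment of the second assertion (approximation by $t_nk_n f_n\circ g_n\in\widetilde{E}$ and continuity of $M$) is also the paper's argument.
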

\begin{proof}
Firstly, we prove the continuity of $M$. Suppose that $\lim_{n\rightarrow \infty}f_n=f$ and $\lim_{n\rightarrow \infty}g_n=g$ in $Cl(\widetilde{E})-\{0\}$. By Corollary \ref{coro3}, $f_n$ and $g_n$ converge uniformly to $f$ and $g$ on any compact subset of $\mathcal{MF}$, respectively. We need to prove that $f_n\circ g_n$ converges to $f\circ g$ in the topology of pointwise convergence.

Let $d$ be the Euclidean metric on $\mathcal{MF}$ induced by $\Phi$. For any $F\in\mathcal{MF}$, since $\lim_{n\rightarrow\infty}g_n(F)=g(F)$, we know that for any $\epsilon>0$, there is $N_1>0$ such that for any $n>N_1$,
$$
d(f\circ g_n(F),f\circ g(F))<\frac{\epsilon}{2}
$$
and $\{g_n(F)\}_{n=1}^{\infty}\subseteq B$ for some compact neighbourhood $B$ of $g(F)$.

Since $f_n$ converges uniformly to $f$ on $B$, there exists $N_2>0$ such that for any $n>N_2$ and $F\in B$, $d\big(f_n(F),f(F)\big)<\frac{\epsilon}{2}$.

Thus for any $n>\max\{N_1,N_2\}$,
\begin{equation*}
d(f_n\circ g_n(F),f\circ g(F))\leq d(f_n\circ g_n(F),f\circ g_n(F))+d(f\circ g_n(F),f\circ g(F))<\frac{\epsilon}{2}+\frac{\epsilon}{2}=\epsilon,
\end{equation*}
which implies that $f_n\circ g_n(F)$ converges to $f\circ g(F)$. Thus $M$ is continuous.

Secondly, we prove that $f,g\in Cl(\widetilde{E})-\{0\}$ implies $f\circ g\in Cl(\widetilde{E})$. Take $f_n,g_n\in Mod(S)$ and $t_n,k_n>0$ such that $\lim_{n\rightarrow\infty}t_nf_n=f$ and $\lim_{n\rightarrow\infty}k_ng_n=g$. Since $M$ is continuous, $\lim_{n\rightarrow\infty}t_nk_nf_n\circ g_n=f\circ g$, which implies that $f\circ g\in Cl(\widetilde{E})$.
\end{proof}

By Proposition \ref{prop2}, for any two element $[f],[g]$ in $Cl(E)$, if $f\circ g\neq 0$, we can define the product of $[f]$ and $[g]$ by $[f\circ g]\in Cl(E)$. In particular, restricting to $Mod(S)$, it is coincident with the multiplication operation on $Mod(S)$. Thus the multiplication operation on $Mod(S)$ extends continuously to $Cl(E)$ except in some degenerated cases ($f\circ g=0$).

For the inverse operation, we have
\begin{prop} \label{prop3}
For any $f$ in $Cl(\widetilde{E})$, there exists a unique element $\overline{f}$ in $Cl(\widetilde{E})$ such that $i\big(f(F),G\big)=i\big(F,\overline{f}(G)\big)$ for any $F,G$ in $\mathcal{MF}$. And the map $\varphi:Cl(\widetilde{E})\rightarrow Cl(\widetilde{E}),\,f\mapsto\overline{f}$ is a homeomorphism.
\end{prop}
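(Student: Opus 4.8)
The plan is to construct $\overline{f}$ as a limit of the inverses of mapping classes, exploiting the duality that the intersection form already satisfies on $Mod(S)$. The key observation is that for an actual mapping class $g\in Mod(S)$, its action on $\mathcal{MF}$ preserves intersection numbers, so $i(g(F),G)=i(F,g^{-1}(G))$ for all $F,G$. This means the defining relation $i(f(F),G)=i(F,\overline{f}(G))$ already holds on the dense set $\widetilde{E}$ with $\overline{g}=g^{-1}$. So first I would take $f\in Cl(\widetilde{E})-\{0\}$, write $f=\lim_{n\to\infty}t_n f_n$ with $f_n\in Mod(S)$ and $t_n>0$ (using Corollary \ref{coro3}, the convergence being uniform on compacta), and propose $\overline{f}:=\lim_{n\to\infty}t_n f_n^{-1}$ as the candidate, after first arguing this limit exists up to passing to a subsequence.

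For existence, the argument should mirror the proof of Lemma \ref{lemm4}: since $\{f_n^{-1}\}\subseteq Mod(S)$, the sequence $[f_n^{-1}]\in E$ has a subsequence converging in $Cl(E)$, so after rescaling by suitable positive constants $s_n$, the maps $s_n f_n^{-1}$ converge uniformly on compacta to some $\overline{f}\in Cl(\widetilde{E})$. I would then verify the duality relation passes to the limit: for fixed $F,G$, continuity of $i(\cdot,\cdot):\mathcal{MF}\times\mathcal{MF}\to R_{\geq 0}$ together with $i(t_n f_n(F),G)=t_n i(f_n(F),G)=t_n i(F,f_n^{-1}(G))$ gives, after matching the scalars, the identity $i(f(F),G)=i(F,\overline{f}(G))$. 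Here I must be careful to track the two independent scaling sequences $t_n$ and $s_n$ and confirm they are compatible; the homogeneity of intersection number in each variable makes the projective identity well-posed, and one checks the product $t_n s_n$ stabilizes (one may normalize so that it tends to a positive constant, which can be absorbed into the representative of $\overline{f}$).

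For uniqueness, suppose $\overline{f}$ and $\overline{f}'$ both satisfy the relation. Then $i(F,\overline{f}(G))=i(F,\overline{f}'(G))$ for all $F,G\in\mathcal{MF}$. Fixing $G$, the measured foliations $\overline{f}(G)$ and $\overline{f}'(G)$ have the same intersection number with every $F$, hence with every simple closed curve, and since a measured foliation is determined by its intersection numbers with all of $\mathcal{S}$ (the embedding $i_*$), we conclude $\overline{f}(G)=\overline{f}'(G)$. As $G$ is arbitrary, $\overline{f}=\overline{f}'$.

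Finally, to show $\varphi:f\mapsto\overline{f}$ is a homeomorphism, I would first observe that the duality relation is symmetric: applying it twice shows $\overline{\overline{f}}=f$, so $\varphi$ is an involution and hence a bijection of $Cl(\widetilde{E})$ onto itself. It then suffices to prove $\varphi$ is continuous, and continuity of its inverse follows since $\varphi=\varphi^{-1}$. For continuity, suppose $f_n\to f$ in $Cl(\widetilde{E})$; I want $\overline{f_n}\to\overline{f}$. By precompactness (Lemma \ref{lemm4} applied to the $\overline{f_n}$) any subsequence of $\{\overline{f_n}\}$ has a further subsequence converging to some limit $h$, and passing to the limit in $i(f_n(F),G)=i(F,\overline{f_n}(G))$ using continuity of $i$ gives $i(f(F),G)=i(F,h(G))$; by uniqueness $h=\overline{f}$, so the whole sequence converges. \textbf{The main obstacle} I expect is the bookkeeping of the normalization scalars: the limits $f$ and $\overline{f}$ live in a projective space, so I must pin down representatives and verify that the rescaling constants used to define $\overline{f}$ do not introduce an ambiguity in the intersection-number identity — in particular confirming that the same normalization making $t_n f_n\to f$ uniformly also yields a genuine (nonzero) limit for the rescaled inverses, rather than a degenerate limit, which is where the filling property of $\{\alpha_i\}$ and the lower bound $m\leq\sum_i l(x_0,t_k f_{n_k}^{-1}(\alpha_i))$ from Lemma \ref{lemm4} will be needed again.
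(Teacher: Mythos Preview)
Your approach is correct, but the paper's proof is considerably more direct and in particular dissolves the ``main obstacle'' you flag. The paper exploits the embedding $\Phi:\mathcal{MF}\to R^N$ via the filling curves $\alpha_1,\dots,\alpha_N$: for any $G\in\mathcal{MF}$,
\[
\Phi\big(t_n f_n^{-1}(G)\big)=\big(i(\alpha_i,t_n f_n^{-1}(G))\big)_{i=1}^N=\big(i(t_n f_n(\alpha_i),G)\big)_{i=1}^N\longrightarrow\big(i(f(\alpha_i),G)\big)_{i=1}^N,
\]
so the \emph{same} scalars $t_n$ that make $t_n f_n\to f$ also make $t_n f_n^{-1}$ converge pointwise, with limit $\overline{f}=\Phi^{-1}\big((i(f(\alpha_i),\cdot))_i\big)$ (using that $\Phi(\mathcal{MF})$ is closed). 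No subsequence is needed, and no second normalizing sequence $s_n$ enters, so the bookkeeping you worry about never arises. Your route via precompactness of $\{[f_n^{-1}]\}$ and matching $t_n/s_n$ works too, but requires the extra argument you sketch (fixing $F_0,G_0$ with $i(f(F_0),G_0)>0$ to pin down the ratio, and then invoking uniqueness to upgrade subsequential convergence to full convergence).

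The same distinction recurs in the continuity proof: you argue by subsequences and uniqueness, whereas the paper simply writes $\Phi(\overline{f_n}(\cdot))=(i(f_n(\alpha_i),\cdot))_i$ and reads off convergence from $f_n\to f_0$ and continuity of $i(\cdot,\cdot)$. Both are valid; the paper's trick of trading $f^{-1}$ for intersection with $f(\alpha_i)$ via $\Phi$ is what makes everything explicit and avoids compactness arguments. Your uniqueness argument and the involution step $\overline{\overline{f}}=f$ match the paper's exactly.
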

\begin{proof}
Firstly, we prove the existence of $\overline{f}$. For any $f$ in $Cl(\widetilde{E})$, if $f=Kf_{0}$ for some $K\geq0$ and $f_0\in Mod(S)$, we set $\overline{f}=Kf^{-1}_0$.

For other cases, we assume that $\lim_{n\rightarrow\infty}t_nf_n=f$ for some $t_n>0,\,f_n\in Mod(S)\,(n=1,2,...)$. Then we have
\begin{equation*}
\lim_{n\rightarrow\infty}\Phi(t_n f_n^{-1}(\cdot))=\lim_{n\rightarrow\infty}\big(i(\alpha_i,t_n f_n^{-1}(\cdot))\big)_{i=1}^{N}=\lim_{n\rightarrow\infty}\big(i(t_n f_n(\alpha_i),\cdot)\big)_{i=1}^{N}=\big(i(f(\alpha_i),\cdot)\big)_{i=1}^{N}.
\end{equation*}
From Lemma \ref{lemm2}, we set $f_{0}=\Phi^{-1}(\big(i(f(\alpha_i),\cdot)\big)_{i=1}^{N})$ and then
\begin{equation*}
\lim_{n\rightarrow\infty}t_nf_n^{-1}=f_0.
\end{equation*}
Thus for any $F,G$ in $\mathcal{MF}$, we have
\begin{equation*}
i(f(F),G)=\lim_{n\rightarrow\infty}i\big(t_nf_n(F),G\big)
=\lim_{n\rightarrow\infty}i\big(F,t_nf_n^{-1}(G)\big)=i\big(F,f_0(G)\big).
\end{equation*}
So we set $\overline{f}=f_0$.

Secondly, we prove the uniqueness of $\overline{f}$. For any $f\in Cl(\widetilde{E})$, suppose there are two elements $f_1,f_2$ such that for any $F,G\in \mathcal{MF}$,
\begin{equation*}
i\big(f(F),G\big)=i\big(F,f_1(G)\big)=i\big(F,f_2(G)\big).
\end{equation*}
Then
\begin{equation*}
\Phi\big(f_1(\cdot)\big)=\Phi\big(f_2(\cdot)\big)=\big(i(f(\alpha_i),\cdot)\big)_{i=1}^{N}.
\end{equation*}
Since $\Phi$ is an embedding, we know that $f_1=f_2$.

Now we prove that $\varphi$ is a homeomorphism. Obviously, we have $\overline{\overline{f}}=f$ for any $f$ in $Cl(\widetilde{E})$, which implies that $\varphi^{2}=id:Cl(\widetilde{E})\rightarrow Cl(\widetilde{E})$. Thus we only need to prove that $\varphi$ is continuous. Suppose $\lim_{n\rightarrow\infty}f_n=f_0$ for $\{f_n\}_{n=0}^{\infty}$ in $Cl(\widetilde{E})$. Then we have
\begin{equation*}
\lim_{n\rightarrow\infty}\Phi(\overline{f_n}(\cdot))
=\lim_{n\rightarrow\infty}\big(i(\alpha_i,\overline{f_n}(\cdot))\big)_{i=1}^{N}
=\lim_{n\rightarrow\infty}\big(i(f_n(\alpha_i),\cdot)\big)_{i=1}^{N}
\end{equation*}
\begin{equation*}
=\big(i(f_0(\alpha_i),\cdot)\big)_{i=1}^{N}
=\big(i(\alpha_i,\overline{f_0}(\cdot))\big)_{i=1}^{N}
=\Phi(\overline{f_0}(\cdot)).
\end{equation*}
Since $\Phi$ is an embedding, $\lim_{n\rightarrow\infty}\overline{f_n}=\overline{f_0}$. Thus $\varphi$ is continuous.
\end{proof}
We call $\overline{f}$ defined in Proposition \ref{prop3} the conjugate of $f$. For any $[f]\in Cl(E)$, define the conjugate of $[f]$ by $[\overline{f}]$. In particular, for any $f\in Mod(S)$, the conjugate of $f$ is exactly the inverse of $f$ in $Mod(S)$. Thus the inverse operation on $Mod(S)$ extends continuously to $Cl(E)$.

There is a natural relation between the operations of multiplication and conjugate on $Cl(\widetilde{E})$.
\begin{prop} \label{prop4}
For any $f,g\in Cl(\widetilde{E})$, $\overline{f\circ g}=\overline{g}\circ\overline{f}$.
\end{prop}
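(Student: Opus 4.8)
The plan is to exploit the characterization of the conjugate furnished by Proposition \ref{prop3}: for any $h\in Cl(\widetilde{E})$, the conjugate $\overline{h}$ is the \emph{unique} element of $Cl(\widetilde{E})$ satisfying $i\big(h(F),G\big)=i\big(F,\overline{h}(G)\big)$ for all $F,G\in\mathcal{MF}$. Consequently, to prove $\overline{f\circ g}=\overline{g}\circ\overline{f}$, it suffices to verify that $\overline{g}\circ\overline{f}$ obeys the defining relation of $\overline{f\circ g}$, namely
\begin{equation*}
i\big((f\circ g)(F),G\big)=i\big(F,(\overline{g}\circ\overline{f})(G)\big)\quad\text{for all } F,G\in\mathcal{MF},
\end{equation*}
and then to invoke the uniqueness clause of Proposition \ref{prop3}.

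Before running the computation I would confirm that both sides are legitimate elements of $Cl(\widetilde{E})$, so that the uniqueness statement actually applies. By Proposition \ref{prop2} the composite $f\circ g$ lies in $Cl(\widetilde{E})$, whence $\overline{f\circ g}$ is defined; likewise $\overline{f},\overline{g}\in Cl(\widetilde{E})$ by Proposition \ref{prop3}, and a second application of Proposition \ref{prop2} gives $\overline{g}\circ\overline{f}\in Cl(\widetilde{E})$. Thus both $\overline{f\circ g}$ and $\overline{g}\circ\overline{f}$ are genuine points of $Cl(\widetilde{E})$ to which the uniqueness characterization can be compared.

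The core of the argument is then a two-step chain of intersection-number identities. Applying the defining relation of $\overline{f}$ to the pair $(g(F),G)$ yields $i\big(f(g(F)),G\big)=i\big(g(F),\overline{f}(G)\big)$, while applying the defining relation of $\overline{g}$ to the pair $(F,\overline{f}(G))$ yields $i\big(g(F),\overline{f}(G)\big)=i\big(F,\overline{g}(\overline{f}(G))\big)$. Concatenating these,
\begin{equation*}
i\big((f\circ g)(F),G\big)=i\big(f(g(F)),G\big)=i\big(g(F),\overline{f}(G)\big)=i\big(F,\overline{g}(\overline{f}(G))\big)=i\big(F,(\overline{g}\circ\overline{f})(G)\big),
\end{equation*}
which is precisely the required relation. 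Uniqueness in Proposition \ref{prop3} then forces $\overline{f\circ g}=\overline{g}\circ\overline{f}$.

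Since every step is an equality of intersection numbers flowing directly from the defining property of the conjugate, I do not expect a substantive obstacle here; the only points demanding care are bookkeeping ones. The first is ensuring, as above, that the composites remain in $Cl(\widetilde{E})$ so that the uniqueness characterization is available. The second is the degenerate cases $f=0$, $g=0$, or $f\circ g=0$: in each of these both sides collapse to the zero map by homogeneity (using the convention $\overline{Kf_0}=Kf_0^{-1}$ from Proposition \ref{prop3}), so the identity holds trivially there and the main content lies entirely in the case $f,g\neq 0$ treated by the chain above.
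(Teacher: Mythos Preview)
Your proposal is correct and follows essentially the same route as the paper: the paper's proof is exactly the two-step intersection-number chain $i(f\circ g(F),G)=i(g(F),\overline{f}(G))=i(F,\overline{g}\circ\overline{f}(G))$ followed by an implicit appeal to the uniqueness in Proposition~\ref{prop3}. Your version is simply more explicit about verifying that the composites lie in $Cl(\widetilde{E})$ and about the degenerate cases, which the paper omits.
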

\begin{proof}
From the definition of the conjugate operation, we know that for any $F,G\in\mathcal{MF}$,
\begin{equation*}
i(f\circ g(F),G)=i(g(F),\overline{f}(G))=i(F,\overline{g}\circ\overline{f}(G)),
\end{equation*}
which implies that $\overline{f\circ g}=\overline{g}\circ\overline{f}$.
\end{proof}
\begin{rema} \label{rema2}
By proposition \ref{prop2}, \ref{prop3}, \ref{prop4}, the natural group structure of $Mod(S)$ extends continuously to $Cl(E)$. But $Cl(E)$ is not a group, since there are some degenerated cases that the multiplication is not defined and the conjugate operation on $Cl(E)$ is not indeed an inverse operation for a group.
\end{rema}

Now we prove a lemma:
\begin{lemm}\label{lemm7}
Suppose $\lim_{n\rightarrow\infty}[f_n]=[f_0]$ for some $\{f_n\}_{n=1}^{\infty}$ in $Mod(S)$ and $[f_0]\in \partial E$. Then there exists a sequence of positive numbers $\{t_n\}_{n=1}^{\infty}$ such that $\lim_{n\rightarrow\infty}t_n=0$ and $\lim_{n\rightarrow\infty}t_nf_n=f_0$.
\end{lemm}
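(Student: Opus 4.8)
The plan is to produce the desired sequence from Corollary \ref{coro2} and then show that it is automatically forced to tend to $0$. By Corollary \ref{coro2}, since $[f_n]\to[f_0]$ in $Cl(E)$, there already exists a positive sequence $\{t_n\}_{n=1}^{\infty}$ with $t_nf_n$ converging to $f_0$ uniformly on every compact subset of $\mathcal{MF}$. It therefore suffices to prove that every such sequence satisfies $\lim_{n\rightarrow\infty}t_n=0$.

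First I would transport the convergence to Teichm\"uller space, exactly as in the proof of Proposition \ref{prop1}. Fix $x_0\in\mathcal{T}(S)$ and set $y_n=f_n^{-1}(x_0)$. For every $F\in\mathcal{MF}$, applying the continuous length function $l(x_0,\cdot)$ to $t_nf_n(F)\to f_0(F)$, using homogeneity and the convention $l(x_0,f_n(F))=l(f_n^{-1}(x_0),F)$, gives
\begin{equation*}
t_n\,l(y_n,F)=l\big(x_0,t_nf_n(F)\big)\longrightarrow l\big(x_0,f_0(F)\big).
\end{equation*}
Since $[f_0]\in\partial E\subseteq P\Omega^{'}(\mathcal{MF})$ by Corollary \ref{coro1}, the representative $f_0$ is a nonzero continuous map, so $F\mapsto l(x_0,f_0(F))$ takes finite values and is not identically zero. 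Next I would use proper discontinuity together with $[f_0]\notin E$ to force $y_n$ to leave every compact subset of $\mathcal{T}(S)$: if some subsequence $y_{n_k}$ stayed in a compact set, proper discontinuity of the $Mod(S)$-action would make $\{f_{n_k}^{-1}\}$, hence $\{f_{n_k}\}$, take only finitely many values, so after passing to a further subsequence $f_{n_k}\equiv g$ for a fixed $g\in Mod(S)$; then $[f_0]=[g]\in E$, contradicting $[f_0]\in\partial E$. Hence $y_n\to\infty$ in $\mathcal{T}(S)$.

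The heart of the argument is to compare the normalization $t_n$ with the intrinsic Thurston normalization of the escaping orbit $\{y_n\}$. Suppose, for contradiction, that $t_n\not\to0$; passing to a subsequence we may assume $t_{n_k}\ge\varepsilon>0$. Since $y_{n_k}\to\infty$ and $\mathcal{T}^{Th}(S)$ is compact, a further subsequence (still written $n_k$) converges to some $[G]\in\mathcal{PMF}$, so by the Thurston convergence criterion there exist positive numbers $s_k\to0$ with $s_k\,l(y_{n_k},\cdot)\to i(G,\cdot)$ on $\mathcal{MF}$. Choosing $\alpha\in\mathcal{S}$ with $i(G,\alpha)>0$ forces $l(y_{n_k},\alpha)\to\infty$, while the first step gives $t_{n_k}\,l(y_{n_k},\alpha)\to l(x_0,f_0(\alpha))<\infty$; dividing these two relations yields $t_{n_k}\to0$, contradicting $t_{n_k}\ge\varepsilon$. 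Therefore $t_n\to0$, completing the proof.

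I expect the comparison of the two normalizing sequences $t_n$ and $s_k$ to be the main technical point, together with the logical step of upgrading the subsequential conclusion $t_{n_k}\to0$ to the whole sequence via the contradiction hypothesis; the remaining ingredients (proper discontinuity to rule out $[f_0]\in E$, finiteness of $l(x_0,f_0(\alpha))$, and the existence of $\alpha$ with $i(G,\alpha)>0$) are direct consequences of results already established in the excerpt.
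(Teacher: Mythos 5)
Your proof is correct and follows essentially the same route as the paper's: transport $t_nf_n\to f_0$ to the length identity $t_n\,l(f_n^{-1}(x_0),\cdot)\to l(x_0,f_0(\cdot))$, use proper discontinuity to show $f_n^{-1}(x_0)\to\infty$, and then read off $t_n\to0$ from the Thurston-boundary normalization. You merely fill in two details the paper leaves implicit — that $[f_0]\in\partial E$ is what rules out a bounded subsequence of $f_n^{-1}(x_0)$, and the explicit comparison of $t_n$ with the Thurston normalizing sequence — both of which are welcome.
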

\begin{proof}
Since $\lim_{n\rightarrow\infty}[f_n]=[f_0]$, there exists a sequence of positive numbers $\{t_n\}_{n=1}^{\infty}$ such that $\lim_{n\rightarrow\infty}t_nf_n=f_0$. Now we need to prove that $\lim_{n\rightarrow\infty}t_n=0$.

Choosing a point $x_0$ in $\mathcal{T}(S)$, we have
\begin{equation*}
\lim_{n\rightarrow\infty}t_nl(f^{-1}_n(x_0),\cdot)=\lim_{n\rightarrow\infty}l(x_0,t_nf_n(\cdot))=l(x_0,f_0(\cdot)).
\end{equation*}
From the properly discontinuity of the action of $Mod(S)$ on $\mathcal{T}(S)$, we know that $f^{-1}_n(x_0)\rightarrow\infty$ in $\mathcal{T}(S)$. By the definition of the Thurston compactification, we know that $\lim_{n\rightarrow\infty}t_n l(f^{-1}_n(x_0),\cdot)=l(x_0,f_0(\cdot))=i(F,\cdot)$ for some $F$ in $\mathcal{MF}$. Then we have $\lim_{n\rightarrow\infty}t_n=0$.
\end{proof}

Now we give a description of the points in $\partial E$.
\begin{theo}\label{main2}
For any $[f]$ in $\partial E$, we have
\begin{equation*}
f(\cdot)=\sum_{i=1}^{m}i(E_i,\cdot)F_i,
\end{equation*}
where $E_i,F_i$ are some measured foliations with $i(E_i,E_j)=0$ and $i(F_i,F_j)=0$ for $i,j=1,2,...,m$.
\end{theo}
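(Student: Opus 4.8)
The plan is to realize $[f]$ as a limit of genuine mapping classes and then exploit the $Mod(S)$-invariance of the intersection number. Since $E$ is dense in $Cl(E)$ and $Cl(E)$ is metrizable (Theorem \ref{main}), I would write $[f]=\lim_{n}[f_n]$ with $f_n\in Mod(S)$; by Lemma \ref{lemm7} there are $t_n>0$ with $t_n\to 0$ and $t_nf_n\to f$ uniformly on compact sets (Corollary \ref{coro2}). The first key observation is an isotropy property of the image: for any $F,G\in\mathcal{MF}$,
$$i(f(F),f(G))=\lim_{n}t_n^2\,i(f_n(F),f_n(G))=\lim_{n}t_n^2\,i(F,G)=0,$$
using continuity of $i$ together with the invariance $i(f_n(F),f_n(G))=i(F,G)$. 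Applying the same computation to the conjugate $\overline{f}=\lim_n t_nf_n^{-1}$ (Proposition \ref{prop3}) gives $i(\overline{f}(F),\overline{f}(G))=0$ for all $F,G$. Thus both the image of $f$ and the image of $\overline{f}$ consist of pairwise non-crossing measured foliations.

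Next I would extract a common finite family of components. Because the indecomposable components of $f(F)$ and of $f(G)$ never cross (their pairwise intersection is dominated by $i(f(F),f(G))=0$), the set $\mathcal I$ of projective classes of all indecomposable components occurring in $\{f(F):F\in\mathcal{MF}\}$ is a family of pairwise non-crossing, projectively distinct, indecomposable measured foliations. The crucial structural input is that such a family must be finite, since $S$ supports only boundedly many disjoint indecomposable components; I write $\mathcal I=\{[F_1],\dots,[F_m]\}$ with fixed representatives $F_i$ satisfying $i(F_i,F_j)=0$. By uniqueness of the ergodic decomposition every value then decomposes as $f(F)=\sum_{i=1}^m c_i(F)F_i$ with well-defined coefficients $c_i(F)\geq 0$. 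I expect this finiteness, together with the verification that each $f(F)$ is built only from these fixed components, to be the main obstacle; the rest is bookkeeping.

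Finally I would identify the coefficients as intersection numbers using the separation Lemma \ref{lemm0}. For each fixed $i_0$ and each $\epsilon>0$, choose a curve $\beta_\epsilon$ normalized so that $i(F_{i_0},\beta_\epsilon)=1$ and $i(F_i,\beta_\epsilon)<\epsilon$ for $i\neq i_0$; then $i(f(F),\beta_\epsilon)=\sum_i c_i(F)\,i(F_i,\beta_\epsilon)\to c_{i_0}(F)$, the error being directly bounded by $\epsilon\sum_{i\neq i_0}c_i(F)$. On the other hand, the conjugacy relation $i(f(F),\beta_\epsilon)=i(F,\overline{f}(\beta_\epsilon))$ evaluated at the filling curves $\alpha_1,\dots,\alpha_N$ shows that the coordinate vector $\Phi(\overline{f}(\beta_\epsilon))$ converges; since $\Phi(\mathcal{MF})$ is closed (Lemma \ref{lemm2}), the limit is $\Phi(E_{i_0})$ for a genuine measured foliation $E_{i_0}$, and passing to the limit gives $c_{i_0}(F)=i(E_{i_0},F)$ for all $F$. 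As each $E_{i_0}$ is a limit of members of the isotropic image of $\overline{f}$, a double-limit argument yields $i(E_i,E_j)=0$. Substituting produces $f(F)=\sum_{i=1}^m i(E_i,F)F_i$ with $i(F_i,F_j)=i(E_i,E_j)=0$, as claimed.
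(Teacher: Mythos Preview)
Your proposal is correct and follows essentially the same route as the paper's proof: both use the isotropy of the image of $f$ (and of $\overline{f}$) coming from $t_n^2\to 0$, then invoke the finiteness of pairwise non-crossing indecomposable components to write $f(\cdot)=\sum c_i(\cdot)F_i$, and finally use Lemma~\ref{lemm0} together with the conjugacy relation $i(f(F),\beta)=i(F,\overline{f}(\beta))$ to identify each $c_i$ as $i(E_i,\cdot)$. The only cosmetic difference is that where the paper bounds $l(\overline{f}(G_n))$ and appeals to the compactness Lemma~\ref{lemm1} to extract a convergent subsequence, you instead observe that $\Phi(\overline{f}(\beta_\epsilon))$ already converges coordinatewise and cite Lemma~\ref{lemm2} to land the limit in $\Phi(\mathcal{MF})$; either device yields the same $E_{i_0}$.
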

\begin{proof}
Since $[f]\in \partial E$, there exists a sequence $\{f_n\}_{n=1}^{\infty}$ in $Mod(S)$ such that $\lim_{n\rightarrow\infty}[f_n]=[f]$. By Lemma \ref{lemm7}, there exists a sequence of positive numbers $\{t_n\}_{n=1}^{\infty}$ such that $\lim_{n\rightarrow\infty}t_n=0$ and $\lim_{n\rightarrow\infty}t_nf_n=f$ in $\Omega(\mathcal{MF})$.

As $f$ is a map from $\mathcal{MF}$ to $\mathcal{MF}$, let $Imf=\{f(F):F\in\mathcal{MF}\}\subseteq\mathcal{MF}$ be the image of $f$. We claim that for any $F,G\in Imf$, $i(F,G)=0$. For any $F,G$ in $Imf$, there are $F_1,G_1\in\mathcal{MF}$ such that $f(F_1)=F$ and $f(G_1)=G$. Since $\lim_{n\rightarrow\infty}t_nf_n=f$ and $\lim_{n\rightarrow\infty}t_n=0$, we have
\begin{equation*}
i(F,G)=i(f(F_1),f(G_1))=\lim_{n\rightarrow\infty}i(t_nf_n(F_1),t_nf_n(G_1))=\lim_{n\rightarrow\infty}t_n^{2}i(F_1,G_1)=0.
\end{equation*}

From this claim, there are $m$ pairwise disjoint indecomposable measured foliations $\{F_i\}_{i=1}^{m}$ and some nonnegative function $f_i(\cdot)$ on $\mathcal{MF}$ such that
\begin{equation*}
f(\cdot)=\sum_{i=1}^{m}f_i(\cdot)F_i.
\end{equation*}

For $1\leq i\leq m$, from Lemma \ref{lemm0}, there exists a sequence of simple closed curves $\{\gamma_n\}_{n=1}^{\infty}$ such that
\begin{equation*}
i(\gamma_n,F_i)>0,\,\frac{i(\gamma_n,F_j)}{i(\gamma_n,F_i)}<\frac{1}{n}\,(j\neq i).
\end{equation*}
Then
\begin{equation*}
\frac{i(\gamma_n,f(\cdot))}{i(\gamma_n,F_i)}=f_i(\cdot)+\sum_{j\neq i}^{m}\frac{i(\gamma_n,F_j)}{i(\gamma_n,F_i)}f_j(\cdot)
\leq f_i(\cdot)+\frac{1}{n}\sum_{j\neq i}^{m}f_j(\cdot).
\end{equation*}
Set $G_n=\frac{1}{i(\gamma_n,F_i)}\gamma_n$. Then
\begin{equation*}
\lim_{n\rightarrow\infty}i(\overline{f}(G_n),\cdot)
=\lim_{n\rightarrow\infty}\frac{i(\gamma_n,f(\cdot))}{i(\gamma_n,F_i)}=f_i(\cdot).
\end{equation*}
In particular, for the filling curves $\{\alpha_j\}_{j=1}^{N}$ defined in Section \ref{sec2},
\begin{equation*}
\lim_{n\rightarrow\infty}i(\overline{f}(G_n),\alpha_j)=f_i(\alpha_j)\,\,(j=1,2,...,N).
\end{equation*}
Thus there exists a constant $M>0$ such that
\begin{equation*}
l(\overline{f}(G_n))=\sum_{j=1}^{N}i(\overline{f}(G_n),\alpha_j)\leq M.
\end{equation*}
From Lemma \ref{lemm1}, there exists a subsequence $\{G_{n_k}\}_{k=1}^{\infty}$ such that
\begin{equation*}
\lim_{k\rightarrow\infty}\overline{f}(G_{n_k})=E_i
\end{equation*}
for some $E_i$ in $\mathcal{MF}$. Then we have
\begin{equation*}
f_i(\cdot)=\lim_{k\rightarrow\infty}i(\overline{f}(G_{n_k}),\cdot)=i(E_i,\cdot).
\end{equation*}

For $1\leq i\leq m$, we construct a measured foliation $E_i$ as above. Since for any $F,G\in Im\overline{f}$, $i(F,G)=0$, we have $i(E_i,E_j)=0$ for $i\neq j$. Thus we have
\begin{equation*}
f=\sum_{i=1}^{m}i(E_i,\cdot)F_i,
\end{equation*}
and $E_i,F_i$ are measured foliations with $i(E_i,E_j)=0$ and $i(F_i,F_j)=0$ for $i,j=1,2,...,m$,
which completes the proof.
\end{proof}
\begin{rema} \label{rema3}
If $f(\cdot)=\sum_{i=1}^{m}i(E_i,\cdot)F_i$ as in Theorem \ref{main2}, then $\overline{f}(\cdot)=\sum_{i=1}^{m}i(F_i,\cdot)E_i$.
\end{rema}
\begin{prob}
Does the converse of Theorem \ref{main2} hold: for any $\{E_i\}_{i=1}^{m},\{F_i\}_{i=1}^{m}$ in $\mathcal{MF}$ with $i(E_i,E_j)=0$ and $i(F_i,F_j)=0$ ($i,j=1,2,...,m$), $[\sum_{i=1}^{m}i(E_i,\cdot)F_i]\in \partial E$?
\end{prob}

Now we construct some special points in $\partial E$. Firstly, we consider the limit of the sequence $\{f^n\}_{n=1}^{\infty}$ for some $f$ in $Mod(S)$. We need two results (see \cite{Ivanov}).
\begin{prop} \label{theo1}
Let $f=T_{\alpha_1}^{n_1}\circ T_{\alpha_2}^{n_2}\circ \cdot\cdot\cdot\circ T_{\alpha_k}^{n_k}$, where $\alpha_1,...,\alpha_k$ are pairwise disjoint simple closed curves, $T_{\alpha_i}$ is the Dehn Twist of $\alpha_i$ and $n_i\in Z$ $(i=1,2,...,k)$. Then for any $F\in\mathcal{MF}$, we have
\begin{equation*}
\lim_{n\rightarrow \pm\infty} \frac{f^{n}(F)}{|n|}=\sum_{i=1}^{k}|n_i|i(\alpha_i,F)\alpha_i.
\end{equation*}
\end{prop}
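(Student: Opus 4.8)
The plan is to verify the convergence coordinate-wise in $R_{\geq0}^{\mathcal{S}}$, i.e. to show that for every $\beta\in\mathcal{S}$,
\[
\lim_{n\to\pm\infty}\frac{i(f^{n}(F),\beta)}{|n|}=\sum_{i=1}^{k}|n_i|\,i(\alpha_i,F)\,i(\alpha_i,\beta).
\]
Since the $\alpha_i$ are pairwise disjoint, the candidate limit $\sum_{i}|n_i|\,i(\alpha_i,F)\alpha_i$ is a genuine element of $\mathcal{MF}$ whose intersection number with $\beta$ is exactly the displayed sum; as $\mathcal{MF}$ carries the topology of pointwise convergence and is closed in $R_{\geq0}^{\mathcal{S}}$ (Lemma \ref{lemm2}), establishing the limit for all $\beta$ yields the asserted convergence in $\mathcal{MF}$. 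By continuity of $i$, homogeneity, and density of $R_{+}\mathcal{S}$ in $\mathcal{MF}$, it suffices to treat $F=\gamma$ a simple closed curve. Finally, because $\alpha_i\cap\alpha_j=\emptyset$ the twists commute, so $f^{n}=\prod_{i=1}^{k}T_{\alpha_i}^{nn_i}$; moreover each $T_{\alpha_j}$ fixes the isotopy class $\alpha_i$ for $i\neq j$, whence $i(\alpha_i,\prod_{j}T_{\alpha_j}^{m_j}\gamma)=i(\alpha_i,\gamma)$ for every choice of exponents.

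The backbone is the classical single-twist inequality (see \cite{FM}, \cite{FLP}): for any $\alpha\in\mathcal{S}$, any $\eta,\delta\in\mathcal{MF}$ and any $m\in\mathbb{Z}$,
\[
\bigl|\,i(T_{\alpha}^{m}\eta,\delta)-|m|\,i(\alpha,\eta)\,i(\alpha,\delta)\,\bigr|\le i(\eta,\delta).
\]
For the upper bound I would peel the twists off one at a time. Setting $\gamma^{(j)}=\prod_{i=j+1}^{k}T_{\alpha_i}^{nn_i}\gamma$, so that $\gamma^{(0)}=f^{n}(\gamma)$, $\gamma^{(k)}=\gamma$ and $\gamma^{(j-1)}=T_{\alpha_j}^{nn_j}\gamma^{(j)}$, applying the inequality to the outermost twist and using $i(\alpha_j,\gamma^{(j)})=i(\alpha_j,\gamma)$ gives a telescoping estimate
\[
i(f^{n}(\gamma),\beta)\le |n|\sum_{i=1}^{k}|n_i|\,i(\alpha_i,\gamma)\,i(\alpha_i,\beta)+i(\gamma,\beta),
\]
hence $\limsup_{|n|\to\infty}\frac{1}{|n|}i(f^{n}(\gamma),\beta)\le\sum_i|n_i|\,i(\alpha_i,\gamma)\,i(\alpha_i,\beta)$.

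The matching lower bound is the delicate point, since the crude lower bound from the same inequality carries an error $i(\gamma^{(j)},\beta)$ that itself grows linearly in $n$ and would spoil the leading coefficient. To repair this I would argue geometrically and locally: realize $\gamma$ and $\beta$ in minimal position with respect to the multicurve $\alpha_1\cup\cdots\cup\alpha_k$ and fix pairwise disjoint annular neighbourhoods $A_i$ of the $\alpha_i$, so that $f^{n}$ is supported in $\bigcup_i A_i$ and $f^{n}(\gamma)$ coincides with $\gamma$ outside these annuli. Inside $A_i$ the $i(\alpha_i,\gamma)$ strands of $\gamma$ are wrapped $nn_i$ times, hence cross each of the $i(\alpha_i,\beta)$ arcs of $\beta\cap A_i$ about $|nn_i|$ times, producing roughly $|nn_i|\,i(\alpha_i,\gamma)\,i(\alpha_i,\beta)$ intersection points localized in $A_i$. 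As the $A_i$ are disjoint these packets of crossings occupy disjoint regions and therefore add, while the contribution from outside $\bigcup_i A_i$ stays bounded independently of $n$. Summing over $i$ and dividing by $|n|$ yields $\liminf_{|n|\to\infty}\frac{1}{|n|}i(f^{n}(\gamma),\beta)\ge\sum_i|n_i|\,i(\alpha_i,\gamma)\,i(\alpha_i,\beta)$, which together with the upper bound gives the limit.

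The step requiring the most care is precisely this lower bound: one must certify that the locally created crossings are \emph{essential}, i.e. that for large $|n|$ the heavily twisted configuration is in minimal position, so that the geometric intersection number equals the naive count, with no bigons cancelling crossings within a single annulus, between distinct annuli, or against the unchanged part of $\gamma$ outside $\bigcup_i A_i$. Equivalently, this amounts to upgrading the single-twist inequality to the sharper additive form $\bigl|\,i(T_{\alpha}^{m}\eta,\delta)-i(\eta,\delta)-|m|\,i(\alpha,\eta)\,i(\alpha,\delta)\,\bigr|\le C\,i(\alpha,\eta)\,i(\alpha,\delta)$, with an error bounded \emph{independently} of $m$, which then telescopes cleanly over the $k$ disjoint twists; this refinement is exactly what one extracts from the normal-form analysis in \cite{FLP} and \cite{Ivanov}. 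Once the limit is known for simple closed curves, continuity of $i$ and density of the weighted curves extend it to all $F\in\mathcal{MF}$, and since only $|n|$ enters, the same argument covers both $n\to+\infty$ and $n\to-\infty$.
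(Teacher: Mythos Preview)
The paper does not prove this proposition; it is quoted from \cite{Ivanov} (see the sentence ``We need two results (see \cite{Ivanov})'' immediately preceding the statement), so there is no argument in the paper to compare against.

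Your sketch is the standard route and is correct in outline: the telescoping upper bound is clean, and you correctly isolate the lower bound as the only nontrivial step. Two small comments that would tighten it.

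First, the density reduction can be made rigorous without any uniformity hypothesis: once the limit $\lim_{n}f^{-n}(\beta)/|n|=\sum_i|n_i|\,i(\alpha_i,\beta)\alpha_i$ is known for every $\beta\in\mathcal{S}$ (this is your curve--curve statement applied to $f^{-1}$, which has exponents $-n_i$), the identity $i(f^{n}(F),\beta)=i(F,f^{-n}(\beta))$ together with continuity of $i(F,\cdot)$ on $\mathcal{MF}$ gives the limit for arbitrary $F\in\mathcal{MF}$ at once.

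Second, for the lower bound you do not need the ``sharper additive form'' you propose (which is not a standard statement and would itself require justification). Ivanov's lemma already gives the two-sided inequality directly for the full multi-twist: for $g=\prod_i T_{\alpha_i}^{m_i}$ with the $\alpha_i$ pairwise disjoint and any $\gamma,\beta\in\mathcal{S}$,
\[
\Bigl|\,i(g(\gamma),\beta)-\sum_{i}|m_i|\,i(\alpha_i,\gamma)\,i(\alpha_i,\beta)\,\Bigr|\le i(\gamma,\beta).
\]
Applying this with $g=f^{n}$ (so $m_i=nn_i$) and dividing by $|n|$ yields both the upper and lower bounds simultaneously, with no telescoping and no bigon analysis needed beyond what is already packaged in Ivanov's proof. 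Your annular-neighbourhood picture is precisely how that lemma is established in \cite{Ivanov}.
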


\begin{prop}\label{theo2}
Let $f\in Mod(S)$ be a pseudo-Anosov element such that $f(F^s)=\lambda^{-1}F^{s},\,f(F^u)=\lambda F^{u}$ with $\lambda>1,\,F^{s},F^{u}\in \mathcal{MF}$ and $i(F^s,F^u)=1$. Then for any $F\in\mathcal{MF}$, we have
\begin{equation*}
\lim_{n\rightarrow \infty} \frac{f^{n}(F)}{\lambda^{n}}=i(F^{s},F)F^{u},\,\,
\lim_{n\rightarrow \infty} \frac{f^{-n}(F)}{\lambda^{n}}=i(F^{u},F)F^{s}.
\end{equation*}
\end{prop}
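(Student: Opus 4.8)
The plan is to reduce the convergence in $\mathcal{MF}$ to the control of just two intersection numbers, exploiting the $Mod(S)$-invariance of $i$ together with the eigenvalue relations. Write $G_n=f^n(F)/\lambda^n$. Since $f$ is induced by a homeomorphism we have $i(f^n(F),G)=i(F,f^{-n}(G))$ for all $G$, and from $f(F^s)=\lambda^{-1}F^s$, $f(F^u)=\lambda F^u$ we get $f^{-n}(F^s)=\lambda^n F^s$ and $f^{-n}(F^u)=\lambda^{-n}F^u$. Hence
\begin{equation*}
i(G_n,F^s)=\lambda^{-n}i(F,f^{-n}(F^s))=\lambda^{-n}\cdot\lambda^{n}i(F^s,F)=i(F^s,F),
\end{equation*}
\begin{equation*}
i(G_n,F^u)=\lambda^{-n}i(F,f^{-n}(F^u))=\lambda^{-n}\cdot\lambda^{-n}i(F^u,F)=\lambda^{-2n}i(F^u,F).
\end{equation*}
Thus $i(G_n,F^s)$ is constant equal to $i(F^s,F)$, while $i(G_n,F^u)\to 0$. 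One checks that the candidate limit $i(F^s,F)F^u$ has exactly these two intersection values, using $i(F^u,F^u)=0$ and $i(F^u,F^s)=1$; this is the consistency check that fixes the normalization.

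Next I would establish compactness of $\{G_n\}$. Because $F^s$ and $F^u$ are the invariant foliations of a pseudo-Anosov map, they jointly fill up $S$, so by the general form of Lemma \ref{lemm1} (Bonahon) the map $H\mapsto i(F^s,H)+i(F^u,H)$ is proper on $\mathcal{MF}$, i.e. its sublevel sets are compact. The two identities above give $i(F^s,G_n)+i(F^u,G_n)\le i(F^s,F)+i(F^u,F)$ for all $n$, so $\{G_n\}$ lies in a compact subset of $\mathcal{MF}$. It therefore suffices to show that every subsequential limit of $\{G_n\}$ equals $i(F^s,F)F^u$.

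Let $G$ be any subsequential limit. By continuity of $i$ we have $i(G,F^s)=i(F^s,F)$ and $i(G,F^u)=0$. The crux is to deduce from $i(G,F^u)=0$ that $G=tF^u$ for some $t\ge 0$: this is precisely where the pseudo-Anosov hypothesis enters, since the unstable foliation $F^u$ is arational and uniquely ergodic, so the only measured foliations not crossing it are the non-negative multiples of $F^u$, i.e. $\{H:i(H,F^u)=0\}=R_{\geq 0}F^u$. Granting this, $i(G,F^s)=t\,i(F^u,F^s)=t$, so $t=i(F^s,F)$ and $G=i(F^s,F)F^u$ (the degenerate case $i(F^s,F)=0$ is covered, giving $G=0$). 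Since all subsequential limits coincide and $\{G_n\}$ is precompact, $G_n\to i(F^s,F)F^u$, which is the first limit.

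The second limit follows by applying the first to $f^{-1}$, which is again pseudo-Anosov with the same dilatation $\lambda$ but with the roles of the two foliations exchanged: its expanding (unstable) foliation is $F^s$ and its contracting (stable) foliation is $F^u$, so the first limit yields $f^{-n}(F)/\lambda^n\to i(F^u,F)F^s$. I expect the main obstacle to be the identification step $i(G,F^u)=0\Rightarrow G\in R_{\geq 0}F^u$: it rests on the arationality and unique ergodicity of pseudo-Anosov foliations rather than on the formal eigenvalue bookkeeping, and it is the only place where something beyond the intersection-number computation and the filling/compactness argument is required.
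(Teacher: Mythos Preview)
The paper does not supply a proof of this proposition; it is quoted from Ivanov's monograph (see the sentence ``We need two results (see \cite{Ivanov})'' preceding Propositions \ref{theo1} and \ref{theo2}). So there is no argument in the paper to compare against.

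Your argument is the standard one and is correct. The two intersection computations, the compactness step via the filling pair $\{F^s,F^u\}$, and the identification of the limit are all fine. The only nontrivial input, which you correctly isolate, is the implication $i(G,F^u)=0\Rightarrow G\in R_{\geq 0}F^u$. This does hold: the invariant foliations of a pseudo-Anosov map are minimal (arational) and uniquely ergodic, a classical fact (provable, for instance, via a Perron--Frobenius argument on an invariant train track, or from Thurston's north--south dynamics on $\mathcal{PMF}$, which forces $[F^u]$ to be the unique projective class with $i(\cdot,F^u)=0$). Once that is granted, your normalization via $i(G,F^s)=i(F^s,F)$ and $i(F^u,F^s)=1$ pins down the limit, and the symmetry under $f\mapsto f^{-1}$ gives the second statement.
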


From Proposition \ref{theo1} and Proposition \ref{theo2}, we have
\begin{prop} \label{prop5}
(1) With the assumption of Proposition \ref{theo1}, we have
\begin{equation*}
\lim_{n\rightarrow \pm\infty}[f^{n}(\cdot)]=[\sum_{i=1}^{k}|n_i|i(\alpha_i,\cdot)\alpha_i]\in \partial E.
\end{equation*}
(2) With the assumption of Proposition \ref{theo2}, we have
\begin{equation*}
\lim_{n\rightarrow \infty}[f^{n}(\cdot)]=[i(F^{s},\cdot)F^{u}]\in\partial E \,\,\,
\lim_{n\rightarrow \infty}[f^{-n}(\cdot)]=[i(F^{u},\cdot)F^{s}]\in\partial E.
\end{equation*}
\end{prop}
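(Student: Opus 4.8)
The plan is to read the pointwise limits supplied by Proposition~\ref{theo1} and Proposition~\ref{theo2} directly as statements of convergence in $P\Omega(\mathcal{MF})$, and then to certify separately that the resulting limit points escape $E$.

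First, for part~(1), write $g(\cdot)=\sum_{i=1}^{k}|n_i|i(\alpha_i,\cdot)\alpha_i$. This $g$ is homogeneous and continuous, so $g\in\Omega^{'}(\mathcal{MF})$, and $g\neq 0$ as soon as some $n_i\neq 0$ (the degenerate case $f=id$ being vacuous). By Proposition~\ref{theo1}, for every fixed $F\in\mathcal{MF}$ we have $\tfrac{1}{|n|}f^{n}(F)\to g(F)$ as $n\to\pm\infty$; setting $t_n=1/|n|$, this says precisely that $t_nf^{n}$ converges to $g$ in the topology of pointwise convergence. By the definition of the quotient topology on $P\Omega(\mathcal{MF})$, $\lim_{n\to\pm\infty}[f^{n}]=[g]$, and since each $[f^{n}]\in E$ and $Cl(E)$ is closed, $[g]\in Cl(E)$. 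Part~(2) is identical with $t_n=\lambda^{-n}$: Proposition~\ref{theo2} gives $\lambda^{-n}f^{n}(F)\to i(F^{s},F)F^{u}$ and $\lambda^{-n}f^{-n}(F)\to i(F^{u},F)F^{s}$ pointwise, whence $[f^{n}]\to[i(F^{s},\cdot)F^{u}]$ and $[f^{-n}]\to[i(F^{u},\cdot)F^{s}]$ in $Cl(E)$.

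It then remains to check that each of these limits lies in $\partial E=Cl(E)-E$, and this is the only step needing a genuine argument. I would use that the limit map fails to be surjective onto $\mathcal{MF}$: in case~(1) its image is contained in the cone $\{\sum_{i}c_i\alpha_i:c_i\geq 0\}$ of weighted curves carried by $\{\alpha_1,\dots,\alpha_k\}$, and in case~(2) in the rays $R_{\geq 0}F^{u}$ or $R_{\geq 0}F^{s}$; each is a proper subset of $\mathcal{MF}\cong R^{6g-6+2n}$. If the limit $[g]$ belonged to $E$, then $g=c\,h$ for some $c>0$ and some $h\in Mod(S)$ acting on $\mathcal{MF}$; but $h$ is a homeomorphism of $\mathcal{MF}$, hence surjective, and $c\,h$ is still surjective, contradicting that $g$ has proper image. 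Therefore $[g]\notin E$, so $[g]\in\partial E$, and the same reasoning applies to both pseudo-Anosov limits.

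The main point to emphasize is that the analytic content is carried entirely by Propositions~\ref{theo1} and~\ref{theo2}; no further estimate is needed, because convergence in $P\Omega(\mathcal{MF})$ is defined exactly by pointwise convergence after a positive rescaling. The only real obstacle is separating the limit from $E$, which the non-surjectivity observation settles cleanly. Alternatively, one may invoke Proposition~\ref{prop1}: since $f$ has infinite order, $\{[f^{n}]\}$ consists of infinitely many distinct points of the discrete set $E$, so a point of $E$ (being isolated) cannot be their limit, forcing the limit into $\partial E$.
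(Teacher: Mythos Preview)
Your argument is correct and matches the paper's approach: the paper simply records Proposition~\ref{prop5} as an immediate consequence of Propositions~\ref{theo1} and~\ref{theo2} without writing out a proof. Your added verification that the limits lie in $\partial E$ rather than $E$---via non-surjectivity of the limit map, or alternatively via the discreteness of $E$ from Proposition~\ref{prop1}---fills in detail the paper leaves implicit.
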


It is well known that the action of $Mod(S)$ on $\mathcal{PMF}$ is minimal, that is, the orbit of any element of $\mathcal{PMF}$ under the action of $Mod(S)$ is dense in $\mathcal{PMF}$ (see \cite{FLP}). We extend this result a little:
\begin{lemm} \label{lemm5}
Let $Mod^{'}(S)\subseteq Mod(S)$ be the set of all mapping classes preserving the punctures of $S$ pointwise. Then the action of $Mod^{'}(S)$ on $\mathcal{PMF}$ is minimal, that is, the orbit of any element of $\mathcal{PMF}$ under the action of $Mod^{'}(S)$ is dense in $\mathcal{PMF}$.
\end{lemm}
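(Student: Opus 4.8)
The plan is to re-examine the classical proof that $Mod(S)$ acts minimally on $\mathcal{PMF}$ (the one via pseudo-Anosov dynamics, as in \cite{FLP}) and to observe that it can be carried out entirely inside $Mod^{'}(S)$. First note that $Mod^{'}(S)$ is exactly the kernel of the homomorphism $Mod(S)\to \mathrm{Sym}(n)$ recording the permutation induced on the $n$ punctures; hence it is a normal subgroup of finite index (at most $n!$). If $n\leq 1$ then $Mod^{'}(S)=Mod(S)$ and the statement is the known one, so we may assume $n\geq 2$.

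The key observation is that passing to a suitable power keeps us inside $Mod^{'}(S)$ without changing the relevant dynamics. If $f\in Mod(S)$ is pseudo-Anosov with invariant foliations $[F^{u}],[F^{s}]$ and dilatation $\lambda>1$, then $f$ induces a finite-order permutation of the punctures; letting $M$ be its order, we have $f^{M}\in Mod^{'}(S)$, and $f^{M}$ is again pseudo-Anosov with the \emph{same} stable and unstable foliations $[F^{s}],[F^{u}]$ (and dilatation $\lambda^{M}$). Consequently the set of unstable foliations of pseudo-Anosov elements of $Mod^{'}(S)$ coincides with that of $Mod(S)$, and the latter is known to be dense in $\mathcal{PMF}$.

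With this in hand I would fix an arbitrary $[G]\in\mathcal{PMF}$ and, for each pseudo-Anosov $f$ with $[G]\neq[F^{s}]$, use Proposition \ref{theo2} to get, after projectivizing,
\begin{equation*}
\lim_{n\to\infty}[f^{nM}(G)]=[F^{u}],
\end{equation*}
since $i(F^{s},G)>0$ whenever $[G]\neq[F^{s}]$ (the stable foliation of a pseudo-Anosov map fills $S$). As every $f^{nM}\in Mod^{'}(S)$, this shows $[F^{u}]\in\overline{Mod^{'}(S)\cdot[G]}$; letting $f$ vary, the attracting foliations obtained this way are dense, so the closed set $\overline{Mod^{'}(S)\cdot[G]}$ is all of $\mathcal{PMF}$. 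The main obstacle is the single exceptional point $[G]=[F^{s}]$ attached to each $f$, for which the iterates fix $[G]$ and give no information. This is handled by never relying on one fixed pseudo-Anosov map: for each prescribed target one chooses a pseudo-Anosov element whose \emph{repelling} foliation differs from $[G]$, which is possible because pseudo-Anosov maps with distinct stable foliations and with unstable foliations dense in $\mathcal{PMF}$ are plentiful. Thus one can always steer the given $[G]$ arbitrarily close to any prescribed point while staying inside $Mod^{'}(S)$.

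Alternatively, one may treat the known minimality of $Mod(S)$ as a black box. Writing $G=Mod(S)$, $H=Mod^{'}(S)$ and choosing coset representatives $g_{1},\dots,g_{m}$, the set $\mathcal{PMF}=\overline{G\cdot[F]}$ decomposes as the finite union $\bigcup_{i}C_{i}$ with $C_{i}=\overline{H\cdot(g_{i}[F])}$, and $G$ permutes the $C_{i}$ because $H$ is normal. The $G$-invariant functions $x\mapsto\#\{i:x\in C_{i}\}$ and $x\mapsto\#\{i:x\in\mathrm{int}(C_{i})\}$ are upper- resp. lower-semicontinuous, so minimality of the $G$-action forces each to be constant; comparing them shows every point lies on the same number of boundaries $\partial C_{i}$. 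Since a finite union of the nowhere dense closed sets $\partial C_{i}$ cannot cover the Baire space $\mathcal{PMF}$, that common number is $0$, whence each $C_{i}$ is clopen and therefore equals the connected space $\mathcal{PMF}\cong S^{6g-7+2n}$ (connected because $N=3g-3+n\geq1$); in particular $\overline{H\cdot[F]}=\mathcal{PMF}$.
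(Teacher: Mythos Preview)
Both of your arguments are sound, but the paper takes a much more elementary route that avoids pseudo-Anosov dynamics and abstract topology altogether. The paper simply observes that every Dehn twist $T_\gamma$ already lies in $Mod'(S)$ (its support is an annulus, so it fixes each puncture), and then uses Proposition~\ref{theo1} twice: given any $[G]\in\mathcal{PMF}$ and any target curve $\beta$, choose $\gamma$ with $i(\gamma,G)>0$ and $i(\gamma,\beta)>0$; then $[T_\gamma^{n}(G)]\to[\gamma]$ and $[T_\beta^{n}(\gamma)]\to[\beta]$, so $[\beta]$ lies in the $Mod'(S)$-orbit closure of $[G]$. Density of $\mathcal{S}$ in $\mathcal{PMF}$ finishes the proof. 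No powers, no finite-index bookkeeping, no Baire category.

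Your first approach via Proposition~\ref{theo2} is correct in spirit but imports the (true, though not proved in this paper) density of pseudo-Anosov attracting foliations, and contains a small slip: the assertion ``$i(F^{s},G)>0$ whenever $[G]\neq[F^{s}]$'' is not literally correct, since the stable foliation of a pseudo-Anosov need not be uniquely ergodic; $i(F^{s},G)=0$ only forces $G$ to be carried by the same topological foliation. The repair is exactly what you indicate afterwards---choose $f$ with $i(F^{s}(f),G)>0$---but the Dehn-twist argument sidesteps this entirely. Your second argument is a clean general fact (a finite-index subgroup of a group acting minimally on a connected compact metrizable space again acts minimally) and is a nice alternative; its advantage is that it treats the $Mod(S)$-minimality as a black box, at the cost of being less self-contained than the paper's two applications of Proposition~\ref{theo1}.
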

\begin{proof}
Since $\mathcal{S}$ is dense in $\mathcal{PMF}$, we only need to prove that for any $\alpha,\beta\in\mathcal{S}$, $\beta\in Cl(Mod^{'}(S)(\alpha))$, where $Cl(Mod^{'}(S)(\alpha))\subseteq\mathcal{PMF}$ is the closure of the orbit of $\alpha$ under the action of $Mod^{'}(S)$.

Take $\gamma\in\mathcal{S}$ such that $i(\alpha,\gamma)\neq 0$ and $i(\beta,\gamma)\neq 0$. Let $T_{\gamma}$ and $T_{\beta}$ be the Dehn Twist of $\gamma$ and $\beta$, respectively. Note that $T_{\gamma}$ and $T_{\beta}$ preserve each puncture of $S$. Thus $T_{\gamma},T_{\beta}\in Mod^{'}(S)$. By Theorem \ref{theo1}, $\lim_{n\rightarrow \infty} \frac{T_{\gamma}^{n}(\alpha)}{n}=i(\alpha,\gamma)\gamma$, which implies that $\gamma\in Cl(Mod^{'}(S)(\alpha))$. Using Theorem \ref{theo1} again, we have $\lim_{n\rightarrow \infty} \frac{T_{\beta}^{n}(\gamma)}{n}=i(\gamma,\beta)\beta$, which implies that $\beta\in Cl(Mod^{'}(S)(\gamma))$. Thus we have $\beta\in Cl(Mod^{'}(S)(\alpha))$.
\end{proof}

From Lemma \ref{lemm5}, we have
\begin{prop} \label{prop6}
For any $F,G\in \mathcal{MF}$, $[i(F,\cdot)G]\in \partial E$.
\end{prop}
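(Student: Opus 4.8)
The plan is to realize $[i(F,\cdot)G]$ as a limit, taking place inside $\partial E$, of a family obtained by acting with the mapping class group on a single explicitly known boundary point. First I would note that the projective class $[i(F,\cdot)G]$ in $P\Omega(\mathcal{MF})$ depends only on the projective classes $[F],[G]\in\mathcal{PMF}$: rescaling $F$ or $G$ only multiplies the map $i(F,\cdot)G$ by a positive scalar. Hence I may assume $F,G\neq 0$ and work throughout with projective data. As a seed I would take a Dehn twist $T_\alpha$ about an essential simple closed curve $\alpha$; Proposition \ref{prop5}(1) (with $k=1$, $n_1=1$) yields $H:=i(\alpha,\cdot)\alpha$ with $[H]\in\partial E$.

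Next I would use the extended multiplication of Proposition \ref{prop2} to move this seed around. For $g,h\in Mod(S)$, the homogeneity of $g$ together with the $Mod(S)$-invariance $i(\alpha,h(\cdot))=i(h^{-1}(\alpha),\cdot)$ gives
\begin{equation*}
g\circ H\circ h=i\big(h^{-1}(\alpha),\cdot\big)\,g(\alpha).
\end{equation*}
Since $g,h\in E$ and $H\in Cl(\widetilde{E})-\{0\}$, Proposition \ref{prop2} guarantees $g\circ H\circ h\in Cl(\widetilde{E})$, so $[g\circ H\circ h]\in Cl(E)$; and as its image lies in a single ray it is not the action of a homeomorphism, hence $[g\circ H\circ h]\notin E$ and so lies in $\partial E$ (in accordance with the description in Theorem \ref{main2}). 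Thus every point of the form $[i(h^{-1}(\alpha),\cdot)g(\alpha)]$ belongs to $\partial E$.

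Then I would invoke minimality. By Lemma \ref{lemm5} the $Mod'(S)$-orbit of $[\alpha]$ is dense in $\mathcal{PMF}$, so I can choose $g_n,h_n\in Mod(S)$ with $[g_n(\alpha)]\to[G]$ and $[h_n^{-1}(\alpha)]\to[F]$; crucially the source and target foliations are moved independently, which is what lets me reach an arbitrary pair. To upgrade this projective convergence to convergence in $Cl(E)$, I would pick positive scalars $a_n,b_n$ with $a_n h_n^{-1}(\alpha)\to F'$ and $b_n g_n(\alpha)\to G'$ in $\mathcal{MF}$, where $[F']=[F]$ and $[G']=[G]$. Then for each fixed $C\in\mathcal{MF}$, continuity of the intersection pairing gives $i(a_n h_n^{-1}(\alpha),C)\,b_n g_n(\alpha)\to i(F',C)G'$, i.e. $a_nb_n\,i(h_n^{-1}(\alpha),\cdot)g_n(\alpha)$ converges pointwise to $i(F',\cdot)G'$. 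By Corollary \ref{coro2} this means $[i(h_n^{-1}(\alpha),\cdot)g_n(\alpha)]\to[i(F',\cdot)G']=[i(F,\cdot)G]$ in $Cl(E)$. Since each term lies in $\partial E$ and $\partial E$ is closed (Proposition \ref{prop1}), the limit $[i(F,\cdot)G]$ lies in $\partial E$.

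I expect the main obstacle to be precisely this last step: converting the merely \emph{projective} density supplied by minimality into genuine convergence in the quotient/metric topology of $Cl(E)$. The key is that Corollary \ref{coro2} permits an arbitrary positive rescaling $t_n$, so I only need the defining foliations to converge \emph{projectively} and can absorb the normalizing scalars into $t_n=a_nb_n$; the independence of the two moves (source via $h$, target via $g$) is what allows an arbitrary target pair $([F],[G])$. I would also take care that $F,G\neq0$, so that $[i(F,\cdot)G]$ is a well-defined point of $P\Omega(\mathcal{MF})$ in the first place.
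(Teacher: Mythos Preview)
Your proposal is correct and follows essentially the same route as the paper: start from the Dehn-twist boundary point $[i(\alpha,\cdot)\alpha]\in\partial E$, conjugate by pairs of mapping classes to obtain $[i(h^{-1}(\alpha),\cdot)g(\alpha)]\in\partial E$, and then use minimality (Lemma~\ref{lemm5}) together with the closedness of $\partial E$ to pass to the limit $[i(F,\cdot)G]$. The paper's proof is more terse, but your added justifications (the explicit scalars $a_n,b_n$, the appeal to Corollary~\ref{coro2}, and the remark that $F,G\neq 0$ is needed for $[i(F,\cdot)G]$ to make sense) are all appropriate and match the underlying argument.
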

\begin{proof}
From Lemma \ref{lemm5}, for a simple closed curve $\alpha$ in $S$, there are two sequences $\{f_n\}_{n=1}^{\infty}, \{g_n\}_{n=1}^{\infty}$ in $Mod^{'}(S)$ such that $\lim_{n\rightarrow\infty}[f_n(\alpha)]=[F]$ and $\lim_{n\rightarrow\infty}[g_n(\alpha)]=[G]$ in $\mathcal{PMF}$. Note that $[f_\alpha(\cdot)]=[i(\alpha,\cdot)\alpha]\in \partial E$ by Proposition \ref{prop5}(1). Then $\lim_{n\rightarrow\infty}[g_{n}\circ f_\alpha\circ f^{-1}_{n}(\cdot)]=\lim_{n\rightarrow\infty}[i(f_n(\alpha),\cdot)g_n(\alpha)]=[i(F,\cdot)G]\in \partial E$.
\end{proof}

We extend the result of Proposition \ref{prop6} by operation on subsurfaces. Let $\gamma_1,...,\gamma_p$ be disjoint essential simple closed curves in $S$. After cutting along these curves, we have some connected subsurfaces $S_1,...,S_k$. Then we have
\begin{prop} \label{prop7}
For any $a_i\geq0$ $(i=1,2,...,p)$, $F_j, G_j$ in $\mathcal{MF}(S_j)$ $(j=1,2,...,k)$,
\begin{equation*}
[\sum_{i=1}^{p}a_ii(\gamma_i,\cdot)\gamma_i+\sum_{j=1}^{k}i(F_j,\cdot)G_j]\in \partial E.
\end{equation*}
\end{prop}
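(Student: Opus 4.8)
The strategy is to reduce the proposition to the already-established Proposition \ref{prop6} (the single-term case $[i(F,\cdot)G]\in\partial E$) by working inside the subsurfaces and then recombining the contributions. The key observation is that each target map $\sum_{i=1}^{p}a_i i(\gamma_i,\cdot)\gamma_i+\sum_{j=1}^{k}i(F_j,\cdot)G_j$ is a finite sum of terms of the form (intersection number)$\times$(measured foliation), where the foliations $\gamma_1,\dots,\gamma_p$ together with the $G_j\in\mathcal{MF}(S_j)$ are pairwise disjoint. This disjointness is automatic from the geometry: the $\gamma_i$ are disjoint simple closed curves, the $G_j$ live in distinct complementary subsurfaces $S_j$ obtained by cutting along $\cup\gamma_i$, and each $G_j$ is disjoint from every $\gamma_i$ and from every $G_{j'}$ with $j'\neq j$. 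So the target map already has the structural form predicted by Theorem \ref{main2}; the task is to realize it as a genuine limit of elements of $Mod(S)$ in $Cl(E)$.

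First I would set up mapping classes supported in each piece. For each subsurface $S_j$, by Lemma \ref{lemm5} applied to $Mod^{'}(S_j)$ (or by combining partial pseudo-Anosov / multitwist maps supported on $S_j$ with Lemma \ref{lemm5}), I would produce, exactly as in the proof of Proposition \ref{prop6}, a sequence $\{h_n^{(j)}\}$ of mapping classes \emph{supported on $S_j$} such that $[h_n^{(j)}(\cdot)]$ approaches $[i(F_j,\cdot)G_j]$ as a limit in $\partial E$ built from the data on $S_j$. Likewise, for each curve $\gamma_i$, the Dehn twist power $T_{\gamma_i}^{n}$ gives, via Proposition \ref{prop5}(1), the term $[i(\gamma_i,\cdot)\gamma_i]$. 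The weights $a_i$ and the relative scaling between the $k+p$ contributions are absorbed by the projective normalization: since we work in $Cl(E)$ with the scaling freedom of Corollary \ref{coro2}, I can rescale each block independently before combining.

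The main technical step — and the place I expect the real obstacle — is the \emph{recombination}: I need a single sequence $\{f_n\}\subseteq Mod(S)$ whose limit in $Cl(E)$ is the full sum, not merely a separate limit for each summand. The natural candidate is a composition (or product) of the block maps, $f_n = T_{\gamma_1}^{\lfloor a_1 n\rfloor}\circ\cdots\circ T_{\gamma_p}^{\lfloor a_p n\rfloor}\circ h_n^{(1)}\circ\cdots\circ h_n^{(k)}$, using that maps supported on \emph{disjoint} subsurfaces and on the disjoint curves $\gamma_i$ commute and act independently on the respective factors of a measured foliation. The crux is to verify that, after a single positive rescaling $t_n$, the composite $t_n f_n(\cdot)$ converges pointwise (equivalently, by Corollary \ref{coro2}, uniformly on compacta) to $\sum_{i}a_i i(\gamma_i,\cdot)\gamma_i+\sum_j i(F_j,\cdot)G_j$. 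Here one uses that for a measured foliation $F$, its restriction decomposes across the subsurface decomposition, that the piece of $t_n f_n(F)$ landing in $S_j$ is governed only by $h_n^{(j)}$ and contributes $i(F_j,F)G_j$ in the limit (mirroring the computation $\lim[g_n\circ f_\alpha\circ f_n^{-1}]=[i(F,\cdot)G]$ in Proposition \ref{prop6}), and that the twist blocks contribute the $\gamma_i$-terms by Proposition \ref{theo1}. The delicate point is choosing the growth rates $\lfloor a_i n\rfloor$ and the internal parameters of $h_n^{(j)}$ so that all $k+p$ blocks grow at a \emph{common} rate, making one global scalar $t_n$ simultaneously normalize every block to its prescribed coefficient; mismatched growth would cause some terms to vanish or blow up in the projective limit. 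I would handle this by fixing reference intersection numbers on the filling curves $\{\alpha_1,\dots,\alpha_N\}$ and tuning the block parameters so each block has comparable length $L(\cdot)$, then appeal to the metrizability and the equivalence of convergence notions in $Cl(E)$ (Theorem \ref{main}, Corollary \ref{coro2}) to conclude $[\,f_n\,]\to[\sum_i a_i i(\gamma_i,\cdot)\gamma_i+\sum_j i(F_j,\cdot)G_j\,]\in\partial E$.
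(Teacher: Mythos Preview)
Your outline is on the right track, but the paper organizes the argument differently in a way that dissolves the very difficulty you single out (synchronizing the growth rates across the blocks).

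Rather than composing independently constructed block sequences $h_n^{(j)}$ and then hunting for a common scalar $t_n$, the paper starts from a single multitwist limit. It fixes one curve $\beta_j$ in each $S_j$ and uses Lemma~\ref{lemm5} on each $S_j$ to produce global sequences $f_n,g_n\in Mod(S)$, each fixing every $\gamma_i$, with $t_n^{j}f_n(\beta_j)\to F_j$ and $s_n^{j}g_n(\beta_j)\to G_j$. Proposition~\ref{prop5}(1) (together with density of the rationals) then gives, for each fixed $n$, a point
\[
[h_n]=\Bigl[\sum_{i} a_i\,i(\gamma_i,\cdot)\gamma_i+\sum_{j} t_n^{j}s_n^{j}\,i(\beta_j,\cdot)\beta_j\Bigr]\in\partial E,
\]
already carrying exactly the required weights. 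One forms $g_n\circ h_n\circ f_n^{-1}\in\partial E$; since $f_n,g_n$ fix the $\gamma_i$, this equals $\bigl[\sum_i a_i\,i(\gamma_i,\cdot)\gamma_i+\sum_j i(t_n^{j}f_n(\beta_j),\cdot)\,s_n^{j}g_n(\beta_j)\bigr]$, and letting $n\to\infty$ (using that $\partial E$ is closed, Proposition~\ref{prop1}) finishes the proof. No common normalizer $t_n$ is ever needed: the relative weights are inserted directly as twist coefficients, and the conjugating maps $f_n,g_n$ are unscaled mapping classes.

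Your plan is not wrong---if one unpacks the $h_n^{(j)}$ built as in Proposition~\ref{prop6} and uses that disjointly supported maps commute, the composite $h_n^{(1)}\cdots h_n^{(k)}$ reorganizes into precisely the paper's $g_n\circ(\text{multitwist})\circ f_n^{-1}$. But two steps in your sketch remain unjustified: first, that the block parameters can always be tuned to share a single scale; second, that ``the piece of $t_nf_n(F)$ landing in $S_j$ is governed only by $h_n^{(j)}$''. Measured foliations do not restrict to the $S_j$, so the additivity of the limit under composition of disjointly supported maps is itself a statement (a generalization of Proposition~\ref{theo1}) that would need proof. The paper's conjugation device bypasses both issues.
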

\begin{proof}
For $j=1,2,...,k$, take a simple closed curve $\beta_{j}$ in $S_j$. Using Lemma \ref{lemm5} in each subsurface $S_j$ (seen as a punctured surface), we find two sequences $\{f_n\}_{n=1}^{\infty}, \{g_n\}_{n=1}^{\infty}$ in $Mod(S)$ such that $\lim_{n\rightarrow\infty}[f_n(\beta_j)]=[F_j]$, $\lim_{n\rightarrow\infty}[g_n(\beta_j)]=[G_j]$ for $j=1,2,...,k$ and $f_n(\gamma_i)=\gamma_i,g_n(\gamma_i)=\gamma_i$ for $i=1,2,...,p$. Thus for $j=1,2,...,k$, there are two sequences of positive numbers $\{t^{j}_{n}\}_{n=1}^{\infty}$, $\{s^{j}_n\}_{n=1}^{\infty}$ such that
\begin{equation*}
\lim_{n\rightarrow\infty}t^{j}_nf_n(\beta_j)=F_j,\,\,\,\lim_{n\rightarrow\infty}s^{j}_ng_n(\beta_j)=G_j\,(j=1,2,...,k).
\end{equation*}
From Proposition \ref{prop5}(1) and the denseness of the set of rational numbers in $R$, we have
\begin{equation*}
[h_n(\cdot)]=[\sum_{i=1}^{p}a_ii(\gamma_i,\cdot)+
\sum_{j=1}^{k}t^{j}_n s^{j}_n i(\beta_{j},\cdot)\beta_{j}]\in \partial E\,(n=1,2,...).
\end{equation*}
Thus
\begin{equation*}
[g_n\circ h_n\circ f^{-1}_n(\cdot)]=[\sum_{i=1}^{p}a_ii(\gamma_i,\cdot)+
\sum_{j=1}^{k} i(t^{j}_nf_n(\beta_{j}),\cdot)s^{j}_ng_n(\beta_{j})]\in \partial E\,(n=1,2,...).
\end{equation*}
Note that
\begin{equation*}
\lim_{n\rightarrow\infty}[g_n\circ h_n\circ f^{-1}_n(\cdot)]=[\sum_{i=1}^{p}a_ii(\gamma_i,\cdot)\gamma_i+\sum_{j=1}^{k}i(F_j,\cdot)G_j],
\end{equation*}
which implies that $[\sum_{i=1}^{p}a_ii(\gamma_i,\cdot)\gamma_i+\sum_{j=1}^{k}i(F_j,\cdot)G_j]\in \partial E$.
\end{proof}

\section{Some applications} \label{sec4}
Since $Mod(S)$ acts continuously on the Thurston compactification $\mathcal{T}^{Th}(S)=\mathcal{T}(S)\bigcup \mathcal{PMF}$ and the Gardiner-Masur compactification $\mathcal{T}^{GM}(S)=\mathcal{T}(S)\bigcup GM$ of $\mathcal{T}(S)$, we have two maps
$$\Pi_{Th}:Mod(S)\times \mathcal{T}^{Th}(S)\rightarrow\mathcal{T}^{Th}(S),\,(f,p)\mapsto f(p)$$
and $$\Pi_{GM}:Mod(S)\times \mathcal{T}^{GM}(S)\rightarrow\mathcal{T}^{GM}(S),\,(f,p)\mapsto f(p).$$
If we endow $Mod(S)$ with the discrete topology, then $\Pi_{Th}$ and $\Pi_{GM}$ are both continuous. Since $Cl(E)=E\bigcup \partial E$ is a completion of $Mod(S)$ in some sense, it may be natural to extend the domains of $\Pi_{Th}$ and $\Pi_{GM}$ to $Cl(E)\times \mathcal{T}^{Th}(S)$ and $Cl(E)\times \mathcal{T}^{GM}(S)$, respectively.

For this, we need equivalent models of $\mathcal{T}^{Th}(S)$ and $\mathcal{T}^{GM}(S)$. From the definitions of $\mathcal{T}^{Th}(S)$ and $\mathcal{T}^{GM}(S)$, a point in $\mathcal{T}^{Th}(S)$ and $\mathcal{T}^{GM}(S)$ are represented by $[p_1:\mathcal{S}\rightarrow R_{\geq 0}]$ and $[p_2:\mathcal{S}\rightarrow R_{\geq 0}]$, respectively, where $[p_i]\in PR_{\geq0}^{\mathcal{S}}$ is the projective class of $p_i\in R_{\geq0}^{\mathcal{S}}$. Since $R_{+}\times\mathcal{S}$ is dense in $\mathcal{MF}$, $p_1$ and $p_2$ extend to homogeneous continuous functions on $\mathcal{MF}$ (see \cite{Bonahon} and \cite{Miya-1}). Thus a point in $\mathcal{T}^{Th}(S)$ or $\mathcal{T}^{GM}(S)$ can be represented by the projective class of a homogeneous continuous function on $\mathcal{MF}$. Using these notations, the actions of $Mod(S)$ on $\mathcal{T}^{Th}(S)$ and $\mathcal{T}^{GM}(S)$ are defined as follows: for any $f\in Mod(S)$, $p_1=[p_1:\mathcal{MF}\rightarrow R_{\geq0}]\in\mathcal{T}^{Th}(S)$ and $p_2=[p_2:\mathcal{MF}\rightarrow R_{\geq0}]\in\mathcal{T}^{GM}(S)$, $f(p_1)=[p_1\circ f^{-1}]$ and $f(p_2)=[p_2\circ f^{-1}]$. Since the inverse operation $(\cdot)^{-1}$ on $Mod(S)$ extends to the conjugate operation $\overline{(\cdot)}$ on $Cl(E)$, we define the extensions of $\Pi_{Th}$ and $\Pi_{GM}$ as
\begin{theo}\label{main5}
Let $\Delta_1=\{([f],[p])\in Cl(E)\times \mathcal{T}^{Th}(S):p\circ\overline{f}(\cdot)\neq 0\}$ and $\Delta_2=\{([f],[p])\in Cl(E)\times \mathcal{T}^{GM}(S):p\circ\overline{f}(\cdot)\neq 0\}$. The two maps $\Psi_{Th}:\Delta_1\rightarrow \mathcal{T}^{Th}(S)$ and $\Psi_{GM}:\Delta_2\rightarrow \mathcal{T}^{GM}(S)$ defined by $\Psi_{Th}([f],[p])=[p\circ\overline{f}(\cdot)]$ and $\Psi_{GM}([f],[p])=[p\circ\overline{f}(\cdot)]$, respectively, are continuous.
\end{theo}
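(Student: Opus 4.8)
The plan is to establish sequential continuity. Both factors are metrizable---$Cl(E)$ by Theorem \ref{main}, and $\mathcal{T}^{Th}(S)$, $\mathcal{T}^{GM}(S)$ as compactifications of the finite-dimensional space $\mathcal{T}(S)$---so $Cl(E)\times\mathcal{T}^{Th}(S)$ and $Cl(E)\times\mathcal{T}^{GM}(S)$ are metrizable and it suffices to verify sequential continuity. I treat $\Psi_{Th}$; the argument for $\Psi_{GM}$ will be identical once the key estimate below is established in the Gardiner--Masur setting. So take a sequence $([f_n],[p_n])\to([f_0],[p_0])$ inside $\Delta_1$ and aim to show $[p_n\circ\overline{f_n}]\to[p_0\circ\overline{f_0}]$. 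The first step is to absorb the two projective scalings. By Corollary \ref{coro2} there are $t_n>0$ with $t_nf_n\to f_0$ in $Cl(\widetilde{E})$; since the conjugate map is continuous and satisfies $\overline{t_nf_n}=t_n\overline{f_n}$ (Proposition \ref{prop3}), we get $t_n\overline{f_n}\to\overline{f_0}$, in particular pointwise on $\mathcal{S}$. By the definition of the topology of $\mathcal{T}^{Th}(S)$ there are $s_n>0$ with $s_np_n\to p_0$ pointwise on $\mathcal{S}$. Using that every $p$ and every $\overline{f}$ is homogeneous, for each $\alpha\in\mathcal{S}$ one has the identity
\begin{equation*}
s_nt_n\,(p_n\circ\overline{f_n})(\alpha)=s_n\,p_n\big(t_n\overline{f_n}(\alpha)\big),
\end{equation*}
so with the single normalizing scalar $c_n:=s_nt_n$ it remains to prove $s_n\,p_n\big(t_n\overline{f_n}(\alpha)\big)\to p_0\big(\overline{f_0}(\alpha)\big)$ for every $\alpha$.

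This is the evaluation of the functions $s_np_n$ at the \emph{moving} points $G_n:=t_n\overline{f_n}(\alpha)\to G:=\overline{f_0}(\alpha)$, so mere pointwise convergence of $s_np_n$ does not suffice; what is needed is that $s_np_n\to p_0$ \emph{uniformly on compact subsets of} $\mathcal{MF}$. Granting this (the crux, discussed below), the splitting
\begin{equation*}
\big|s_np_n(G_n)-p_0(G)\big|\le\big|s_np_n(G_n)-p_0(G_n)\big|+\big|p_0(G_n)-p_0(G)\big|
\end{equation*}
closes the argument: the first term tends to $0$ by uniform convergence on the compact set $\{G_n:n\ge1\}\cup\{G\}$, and the second by continuity of $p_0$. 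Letting $\alpha$ range over $\mathcal{S}$ gives $c_n(p_n\circ\overline{f_n})\to p_0\circ\overline{f_0}$ pointwise on $\mathcal{S}$. Since $([f_0],[p_0])\in\Delta_1$, the function $p_0\circ\overline{f_0}$ is continuous (as $\overline{f_0}\in Cl(\widetilde{E})\subseteq\Omega^{'}(\mathcal{MF})$ by Corollary \ref{coro1}, and $p_0$ is continuous and homogeneous) and not identically zero, hence represents a genuine point; as $\mathcal{T}^{Th}(S)$ is closed in $PR_{\ge0}^{\mathcal{S}}$ this point lies in $\mathcal{T}^{Th}(S)$, and $[p_n\circ\overline{f_n}]\to[p_0\circ\overline{f_0}]$, proving continuity of $\Psi_{Th}$.

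The main obstacle is thus the uniform-on-compacts upgrade, and here the two cases differ. For the Thurston case I would not prove it in isolation but rather pass to Bonahon's space of geodesic currents $\mathcal{C}(S)$ (see \cite{Bonahon}): both $\mathcal{T}(S)$ (via Liouville currents) and $\mathcal{MF}$ embed into $\mathcal{C}(S)$, the intersection pairing on $\mathcal{C}(S)$ is jointly continuous, and each $p_n$ is of the form $i(c_n,\cdot)$ with $s_nc_n\to c_0$ and $p_0=i(c_0,\cdot)$. Joint continuity then yields $s_np_n(G_n)=i(s_nc_n,G_n)\to i(c_0,G)=p_0(G)$ directly, bypassing a separate uniform-convergence lemma. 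For the Gardiner--Masur case there is no such bilinear description of $\sqrt{Ext}$, and this is where the real work lies: I would derive the uniform-on-compacts statement from the continuity theory of the Gardiner--Masur boundary functions (\cite{Miya-1}, \cite{Miya-2}, \cite{Walsh2}), or, absent a direct citation, show that the normalized family $\{s_np_n\}$ is equicontinuous on $\mathcal{MF}_1$ via the standard comparison and convexity estimates for extremal length and then apply Arzel\`a--Ascoli to promote pointwise convergence to uniform convergence on $\mathcal{MF}_1$ (hence, by homogeneity, on every compact subset of $\mathcal{MF}$). With that in hand, the Gardiner--Masur argument is verbatim the Thurston one.
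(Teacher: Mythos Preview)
Your argument is correct and follows essentially the same route as the paper: both reduce to showing $p_n\circ\overline{f_n}\to p_0\circ\overline{f_0}$ pointwise via a triangle-inequality splitting whose key input is that (after normalization) $p_n\to p_0$ uniformly on compact subsets of $\mathcal{MF}$. The paper simply \emph{assumes} this uniform convergence at the outset (implicitly relying on the references \cite{Bonahon} and \cite{Miya-1} cited just before the theorem for the continuous extensions to $\mathcal{MF}$), whereas you correctly isolate it as the crux and supply independent justification: for $\Psi_{Th}$ your geodesic-currents argument via Bonahon is a clean alternative that sidesteps the uniform-convergence lemma entirely, and for $\Psi_{GM}$ the fact you need is indeed what Miyachi established in \cite{Miya-1}. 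So the structure is the same; you have simply been more explicit about the one step the paper takes for granted.
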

\begin{proof}
We only prove the continuity of $\Psi_{GM}$. The continuity of $\Psi_{Th}$ can be proved by a similar argument.

Suppose $\{[p_n]\}_{n=0}^{\infty}\subseteq \mathcal{T}^{GM}(S),\{[f_n]\}_{n=0}^{\infty}\subseteq Cl(E)$ and $\lim_{n\rightarrow\infty}[p_n]=[p_0]$, $\lim_{n\rightarrow\infty}[f_n]=[f_0]$. Up to some constants, we assume that $\lim_{n\rightarrow\infty}f_n=f_0$ in $\Omega(\mathcal{MF})$ and $p_n:\mathcal{MF}\rightarrow R_{\geq0}$ converges uniformly to $p_0:\mathcal{MF}\rightarrow R_{\geq0}$ on any compact subsets of $\mathcal{MF}$. By Proposition \ref{prop3}, $\lim_{n\rightarrow\infty}\overline{f_n}=\overline{f_0}$.

Observe that for any $F$ in $\mathcal{MF}$,
\begin{equation*}
|p_0\circ\overline{f_0}(F)-p_n\circ\overline{f_n}(F)|\leq
|p_0\circ\overline{f_0}(F)-p_0\circ\overline{f_n}(F)|+|p_0\circ\overline{f_n}(F)-p_n\circ\overline{f_n}(F)|.
\end{equation*}

Since $\lim_{n\rightarrow\infty}\overline{f_n}(F)=\overline{f_0}(F)$, we know that for any $\epsilon>0$, there exists $N_1>0$ such that for any $n>N_1$,
\begin{equation*}
|p_0\circ\overline{f_0}(F)-p_0\circ\overline{f_n}(F)|
<\frac{\epsilon}{2}
\end{equation*}
and $\{\overline{f_n}(F)\}_{n=1}^{\infty}\subseteq M$ for some compact subset $M$ of $\mathcal{MF}$.

Since $p_n(\cdot)$ converges uniformly to $p_0(\cdot)$ on compact set $M$, there exists $N_2>0$ such that for any $n>N_2$,
\begin{equation*}
|p_0\circ\overline{f_n}(F)-p_n\circ\overline{f_n}(F)|
<\frac{\epsilon}{2}.
\end{equation*}

Thus for any $n>\max\{N_1,N_2\}$,
\begin{equation*} |p_0\circ\overline{f_0}(F)-p_n\circ\overline{f_n}(F)|<\epsilon,
\end{equation*}
which implies that for any $F\in\mathcal{MF}$,
\begin{equation*}
\lim_{n\rightarrow\infty}p_n\circ\overline{f_n}(F)=
p_0\circ\overline{f_0}(F).
\end{equation*}
By the definition of $\mathcal{T}^{GM}(S)$,
\begin{equation*}
\Psi_{GM}([f_0],[p_0])=[p_0\circ\overline{f_0}(\cdot)]=
\lim_{n\rightarrow\infty}[p_n\circ\overline{f_n}(\cdot)]=
\lim_{n\rightarrow\infty}\Psi_{GM}([f_n],[p_n]),
\end{equation*}
which completes the proof.
\end{proof}

\begin{rema}\label{rema5}
(1) For $[f]\in Cl(E)$, $[p_1]\in \mathcal{T}^{Th}(S)$ and $[p_2]\in \mathcal{T}^{GM}(S)$, it may occur that $p_1\circ \overline{f}(\cdot)=0$ and $p_2\circ \overline{f}(\cdot)=0$, that is, the values of $p_1$, $p_2$ on the image of $\overline{f}$ are $0$. In these cases, $\Psi_{Th}$ and $\Psi_{GM}$ are degenerated at $([f],[p_1])$ and $([f],[p_2])$, respectively. Thus we restrict the definitions of $\Psi_{Th}$ and $\Psi_{GM}$ on $\Delta_1$ and $\Delta_2$, respectively.
\\(2) For $f_0\in Mod(S)$, $\Psi_{Th}([f_0],\cdot)$ and $\Psi_{GM}([f_0],\cdot)$ are defined on the whole $\mathcal{T}^{Th}(S)$ and $\mathcal{T}^{GM}(S)$, respectively. And $\Psi_{Th}([f_0],\cdot)$ and $\Psi_{GM}([f_0],\cdot)$ are consistent with the actions of $f_0$ on $\mathcal{T}^{Th}(S)$ and $\mathcal{T}^{GM}(S)$, respectively.
\\(3) For $x_0\in\mathcal{T}(S)$, $\Psi_{Th}(\cdot,x_0)$ and $\Psi_{GM}(\cdot,x_0)$ are both defined on the whole $Cl(E)$.
\end{rema}

By Theorem \ref{main5}, we have
\begin{coro}\label{coro4}
For any $x\in\mathcal{T}(S)$ and sequence $\{f_n\}_{n=1}^{\infty}\subseteq Mod(S)$, suppose that $\lim_{n\rightarrow \infty}[f_n]=[f_0]$ in $Cl(E)$ for some $[f_0]\in\partial E$, then $\lim_{n\rightarrow \infty}f_n(x)=[l(x,\overline{f_0}(\cdot)]\in\mathcal{PMF}$ in $\mathcal{T}^{Th}(S)$ and $\lim_{n\rightarrow \infty}f_{n}(x)=[Ext^{\frac{1}{2}}(x,\overline{f_0}(\cdot))]\in GM$ in $\mathcal{T}^{GM}(S)$.
\end{coro}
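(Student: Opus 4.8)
The plan is to realize the basepoint $x$ as a \emph{fixed} second argument of the maps $\Psi_{Th}$ and $\Psi_{GM}$ of Theorem \ref{main5}, and then read off the limit directly from their continuity in the first variable. I will carry out the Thurston case in detail; the Gardiner--Masur case is verbatim with $l(x,\cdot)$ replaced by $Ext^{\frac{1}{2}}(x,\cdot)$, $\Psi_{Th}$ by $\Psi_{GM}$, $\Delta_1$ by $\Delta_2$, and $\mathcal{PMF}$ by $GM$.

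First I would recall that, under the equivalent model of $\mathcal{T}^{Th}(S)$ used in Section \ref{sec4}, the point $x\in\mathcal{T}(S)\subseteq\mathcal{T}^{Th}(S)$ is represented by the projective class $[p]$ of the homogeneous continuous function $p=l(x,\cdot):\mathcal{MF}\rightarrow R_{\geq0}$. For each $f_n\in Mod(S)$ one has $\overline{f_n}=f_n^{-1}$, so by the convention $l(f_n(x),F)=l(x,f_n^{-1}(F))$ together with Remark \ref{rema5}(2) the action of $f_n$ on $x$ is exactly
$$
f_n(x)=[l(x,f_n^{-1}(\cdot))]=[p\circ\overline{f_n}]=\Psi_{Th}([f_n],[p]).
$$
Hence computing $\lim_{n\rightarrow\infty}f_n(x)$ reduces to computing $\lim_{n\rightarrow\infty}\Psi_{Th}([f_n],[p])$.

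Next I would verify the nondegeneracy needed to stay inside the domain $\Delta_1$ on which $\Psi_{Th}$ is defined and continuous. Since $l(x,\cdot)$ is a hyperbolic length function, $l(x,G)=0$ forces $G=0$; moreover $\overline{f_n}=f_n^{-1}$ is a bijection of $\mathcal{MF}$, while by Theorem \ref{main2} and Remark \ref{rema3} the map $\overline{f_0}=\sum_{i}i(F_i,\cdot)E_i$ is a nonzero element of $Cl(\widetilde{E})$. Consequently $p\circ\overline{f_n}\not\equiv 0$ and $p\circ\overline{f_0}\not\equiv 0$, so all the pairs $([f_n],[p])$ and $([f_0],[p])$ lie in $\Delta_1$. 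As $[f_n]\rightarrow[f_0]$ in $Cl(E)$ and $[p]$ is fixed, these pairs converge to $([f_0],[p])$ within $\Delta_1$, and the continuity of $\Psi_{Th}$ from Theorem \ref{main5} gives
$$
\lim_{n\rightarrow\infty}f_n(x)=\Psi_{Th}([f_0],[p])=[l(x,\overline{f_0}(\cdot))]
$$
as a convergence in the compact space $\mathcal{T}^{Th}(S)$.

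Finally I would argue that this limit lies on the boundary $\mathcal{PMF}$, not in the interior. Because $[f_0]\in\partial E$ and $E$ is discrete in $Cl(E)$ (Proposition \ref{prop1}), no element of $Mod(S)$ can be attained infinitely often by $\{[f_n]\}$ — a constant subsequence would force $[f_0]\in E$ — so each element occurs only finitely often. By the proper discontinuity of the action of $Mod(S)$ on $\mathcal{T}(S)$, every compact $K\subseteq\mathcal{T}(S)$ contains $f_n(x)$ for only finitely many $n$, whence $f_n(x)\rightarrow\infty$ in $\mathcal{T}(S)$ and every limit point must lie in $\mathcal{PMF}$. Combined with the uniqueness of the limit obtained above, this shows $[l(x,\overline{f_0}(\cdot))]\in\mathcal{PMF}$, as claimed. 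The point demanding the most care — and the only real obstacle — is checking that the relevant pairs remain in the restricted domains $\Delta_1,\Delta_2$ so that Theorem \ref{main5} actually applies, together with the separate proper-discontinuity argument certifying that the limit is a genuine boundary point; once these are in hand, the continuity of $\Psi_{Th}$ and $\Psi_{GM}$ does the rest.
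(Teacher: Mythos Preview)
Your proof is correct and follows exactly the approach the paper intends: the paper states Corollary~\ref{coro4} as an immediate consequence of Theorem~\ref{main5} without further argument, and you have supplied precisely the details that make this deduction rigorous. Your verification that the pairs lie in $\Delta_1,\Delta_2$ could be shortened by citing Remark~\ref{rema5}(3) directly (which asserts that $\Psi_{Th}(\cdot,x_0)$ and $\Psi_{GM}(\cdot,x_0)$ are defined on all of $Cl(E)$), and the nonvanishing of $\overline{f_0}$ follows more simply from Proposition~\ref{prop3} (the conjugate map is a homeomorphism, so $f_0\neq 0$ forces $\overline{f_0}\neq 0$) than from Theorem~\ref{main2}; but these are only stylistic simplifications, and your boundary argument via proper discontinuity is a welcome addition that the paper leaves implicit.
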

By Theorem \ref{main5} and Corollary \ref{coro4}, we answer Problem \ref{problem1}: for any $x_0$ in $\mathcal{T}(S)$, considering the orbit $\Gamma(x_0)$ of $x_0$ under the action of $Mod(S)$, how to describe the closure of $\Gamma (x_0)$ in $\mathcal{T}^{Th}(S)$ or $\mathcal{T}^{GM}(S)$? For this, we set
\begin{equation*}
\partial E^{Th}(x_0)=\{\Psi_{Th}([f],x_0):[f]\in \partial E\}\subseteq \mathcal{PMF};\,\,\partial E^{GM}(x_0)=\{\Psi_{GM}([f],x_0):[f]\in \partial E\}\subseteq GM.
\end{equation*}
Then we have
\begin{theo} \label{main3}
In $\mathcal{T}^{Th}(S)$, the closure of $\Gamma(x_0)$ is $\Gamma(x_0)\cup \partial E^{Th}(x_0)$. In $\mathcal{T}^{GM}(S)$, the closure of $\Gamma(x_0)$ is $\Gamma(x_0)\cup \partial E^{GM}(x_0)$. What's more, $\partial E^{Th}(x_0)=\mathcal{PMF}$.
\end{theo}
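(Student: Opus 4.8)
The plan is to prove Theorem~\ref{main3} in three parts, handling the two compactification closures first and then the surjectivity claim $\partial E^{Th}(x_0)=\mathcal{PMF}$ separately, since the latter is really a statement about which projective foliations arise as $\Psi_{Th}$-images.

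For the closure statements, I would argue both inclusions. The inclusion $Cl(\Gamma(x_0))\subseteq \Gamma(x_0)\cup \partial E^{Th}(x_0)$ (and its $GM$-analogue) follows from Corollary~\ref{coro4}: given any sequence in $\Gamma(x_0)$, write it as $\{f_n(x_0)\}$ with $f_n\in Mod(S)$; by compactness of $Cl(E)$ (Theorem~\ref{main}) pass to a subsequence so that $[f_{n_k}]\to[f_0]\in Cl(E)$. If $[f_0]\in E$, the orbit sequence is eventually constant up to the finite kernel and the limit lies in $\Gamma(x_0)$; if $[f_0]\in\partial E$, Corollary~\ref{coro4} gives $f_{n_k}(x_0)\to \Psi_{Th}([f_0],x_0)\in \partial E^{Th}(x_0)$. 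For the reverse inclusion, $\Gamma(x_0)$ is obviously contained in its closure, and every point $\Psi_{Th}([f_0],x_0)$ with $[f_0]\in\partial E$ is a limit of $\Gamma(x_0)$: choose $f_n\in Mod(S)$ with $[f_n]\to[f_0]$ (possible since $\partial E\subseteq Cl(E)=Cl(I(Mod(S)))$), and apply Corollary~\ref{coro4} again to get $f_n(x_0)\to\Psi_{Th}([f_0],x_0)$. The $GM$ case is identical word for word, replacing $l$ by $Ext^{1/2}$ and $\mathcal{PMF}$ by $GM$.

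For the final claim $\partial E^{Th}(x_0)=\mathcal{PMF}$, the inclusion $\subseteq$ is immediate since $\partial E^{Th}(x_0)$ is defined as a subset of $\mathcal{PMF}$. For $\supseteq$, I would show that every $[G]\in\mathcal{PMF}$ is realized as $\Psi_{Th}([f],x_0)=[l(x_0,\overline{f}(\cdot))]$ for some $[f]\in\partial E$. The natural candidate is to use the points constructed in Proposition~\ref{prop6}: take $f(\cdot)=i(F,\cdot)G'$ for suitable measured foliations, whose conjugate is $\overline{f}(\cdot)=i(G',\cdot)F$ by Remark~\ref{rema3}. Then $l(x_0,\overline{f}(\cdot))=l(x_0,i(G',\cdot)F)=i(G',\cdot)\,l(x_0,F)$, a positive multiple of $i(G',\cdot)$, whose projective class is exactly $[G']\in\mathcal{PMF}$ under the Thurston embedding (since $i(G',\cdot)$ is the foliation $G'$ viewed in $R_{\geq0}^{\mathcal{S}}$). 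Hence choosing $G'$ with $[G']=[G]$ and any $F$ with $l(x_0,F)\neq 0$ realizes $[G]$. This shows the map $[f]\mapsto \Psi_{Th}([f],x_0)$ already hits every projective class, giving $\partial E^{Th}(x_0)=\mathcal{PMF}$.

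The main obstacle I anticipate is bookkeeping in the final step: one must verify that $[l(x_0,\overline{f}(\cdot))]$, viewed as a point of $\mathcal{T}^{Th}(S)=PR_{\geq0}^{\mathcal{S}}$, genuinely equals the Thurston-boundary point $[G]$ rather than some other foliation, and in particular that the scalar $l(x_0,F)$ is nonzero and plays no role projectively. This reduces to recalling that the Thurston embedding identifies $\mathcal{PMF}$ with the closure of $\mathcal{S}$ in $PR_{\geq0}^{\mathcal{S}}$ via $G'\mapsto [i(G',\cdot)]$, so the computation above is a clean identification. A secondary point of care is ensuring that in the closure arguments the degenerate case $p\circ\overline{f}=0$ of Theorem~\ref{main5} never arises for $p=l(x_0,\cdot)$ or $Ext^{1/2}(x_0,\cdot)$: since $x_0\in\mathcal{T}(S)$, these length functions are strictly positive on $\mathcal{MF}-\{0\}$, and $\overline{f}$ is nonzero for $[f]\in\partial E$, so $([f],x_0)$ always lies in $\Delta_1$ (resp.\ $\Delta_2$) and Corollary~\ref{coro4} applies without exception.
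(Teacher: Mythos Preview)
Your closure arguments for both compactifications are essentially the same as the paper's: both invoke compactness of $Cl(E)$ (Theorem~\ref{main}) and discreteness of $E$ (Proposition~\ref{prop1}) to reduce to Corollary~\ref{coro4}, and both use the reverse direction of Corollary~\ref{coro4} to see that every $\Psi_{Th}([f_0],x_0)$ is an orbit limit.

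The last step, $\partial E^{Th}(x_0)=\mathcal{PMF}$, is where you diverge. The paper uses only Proposition~\ref{prop5}(1): it shows that for every $\alpha\in\mathcal{S}$ the projective class $[i(\alpha,\cdot)]$ lies in $\partial E^{Th}(x_0)$, and then concludes by density of $\mathcal{S}$ in $\mathcal{PMF}$ (implicitly using that $\partial E^{Th}(x_0)$ is closed, being the continuous image of the compact set $\partial E$). You instead invoke the stronger Proposition~\ref{prop6}, taking $[f]=[i(F,\cdot)G']\in\partial E$ so that $\overline{f}(\cdot)=i(G',\cdot)F$ and $\Psi_{Th}([f],x_0)=[i(G',\cdot)]=[G']$ directly. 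Your route hits every projective class on the nose and avoids the density/closedness step, at the cost of appealing to Proposition~\ref{prop6} (which itself rests on the minimality Lemma~\ref{lemm5}); the paper's route is more elementary in its input but leans on an unstated compactness observation to close the argument. Both are correct.
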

\begin{proof}
By Corollary \ref{coro4}, $\partial E^{Th}(x_0)$ is included in the closure of $\Gamma(x_0)$ in $\mathcal{T}^{Th}(S)$. Conversely, suppose $p\in\mathcal{T}^{Th}(S)$ is an element of the closure of $\Gamma(x_0)$ in $\mathcal{T}^{Th}(S)$ and $p\notin \Gamma(x_0)$. Then there exists a sequence $\{f_n\}_{n=1}^{\infty}\subseteq Mod(S)$ such that $\lim_{n\rightarrow\infty}f_n(x_0)=p$ in $\mathcal{T}^{Th}(S)$. Since $Mod(S)$ acts properly discontinuously on $\mathcal{T}(S)$, we know that $p\in \mathcal{PMF}$. By Theorem \ref{main} and Proposition \ref{prop1}, there exists a subsequence $\{f_{n_k}\}_{k=1}^{\infty}$ such that $\lim_{k\rightarrow\infty}[f_{n_k}]=[f_0]$ in $Cl(E)$ for some $[f_0]\in\partial E$. By Corollary \ref{coro4}, we have $$p=\lim_{k\rightarrow\infty}f_{n_k}(x_0)=\Psi_{Th}([f_0],x_0)\in \partial E^{Th}(x_0).$$
Thus the closure of $\Gamma(x_0)$ in $\mathcal{T}^{Th}(S)$ is $\Gamma(x_0)\cup \partial E^{Th}(x_0)$.

Using a similar argument, we know that the closure of $\Gamma(x_0)$ in $\mathcal{T}^{GM}(S)$ is $\Gamma(x_0)\cup \partial E^{GM}(x_0)$.

Now we prove $\partial E^{Th}(x_0)=\mathcal{PMF}$. From Proposition \ref{prop5}(1), we know that for any simple closed curve $\alpha$ in $S$, $[i(\alpha,\cdot)\alpha]\in \partial E$. Thus $[i(\alpha,\cdot)]=[l(x_0,i(\alpha,\cdot)\alpha)]\in \partial E^{Th}(x_0)$. Since the set of simple closed curves is dense in $\mathcal{PMF}$, we have $\partial E^{Th}(x_0)=\mathcal{PMF}$.
\end{proof}
\begin{rema}\label{rema8}
It is well-known that the action of $Mod(S)$ on $\mathcal{PMF}$ is minimal (see \cite{FLP}). This fact also implies $\partial E^{Th}(x_0)=\mathcal{PMF}$: since $Mod(S)$ acts properly discontinuously on $\mathcal{T}(S)$, $\partial E^{Th}(x_0)\cap \mathcal{PMF}\neq\emptyset$. By the minimal action of $Mod(S)$ on $\mathcal{PMF}$, this implies $\partial E^{Th}(x_0)=\mathcal{PMF}$.
\end{rema}
\begin{rema}\label{rema6}
The new boundary $\partial E$ is related to a special boundary of $Mod(S)$. Precisely, fixed a base point $x\in\mathcal{T}(S)$, sending $f\in Mod(S)$ to $f(x)\in \Gamma(x)\subseteq\mathcal{T}(S)$, we can identify $Mod(S)$ with the orbit $\Gamma(x)$ naturally. By Theorem \ref{main3}, the boundary of $\Gamma(x)$ in $\mathcal{T}^{Th}(S)$ is $\partial E^{Th}(x)=\mathcal{PMF}$. Thus $\partial E^{Th}(x)=\mathcal{PMF}$ can be seen as a boundary of $Mod(S)$. As a boundary of $Mod(S)$, $\partial E^{Th}(x)$ is homeomorphic to $\mathcal{PMF}$ but depends upon the base point $x$ heavily. Thus we get a family of boundaries $\{\partial E^{Th}(x):x\in\mathcal{T}(S)\}$ of $Mod(S)$ in which each boundary is isomorphic to $\mathcal{PMF}$. We may call each boundary $\partial E^{Th}(x)$ the Thurston boundary with base point $x$. By Theorem \ref{main5}, the new boundary $\partial E$ covers each boundary $\partial E^{Th}(x)$ in this family by a surjective continuous map $\Psi_{x}:\partial E\rightarrow \partial E^{Th}(x),\,\, [f]\mapsto \Psi_{Th}([f],x)$.
\end{rema}
\begin{rema}\label{rema7}
Different from the case of Thurston compactification, $\partial E^{GM}(x_0)$ may be not the whole boundary $GM$. From the compactness of $\partial E$, we only know that $\partial E^{GM}(x_0)$ is a compact subset of $GM$. And $\partial E^{GM}(x_0)$ contains some new points different from those known points in $GM$. A special kind of boundary point was constructed in \cite{Bourque}:
\begin{equation*}
[Ext^{\frac{1}{2}}\big(x_0,\sum_{i=1}^{k}n_ii(\alpha_i,\cdot)\alpha_i\big)],
\end{equation*}
where $\alpha_i$ are pairwise disjoint simple closed curves and $n_i>0$. By Proposition \ref{prop5}(1), $\partial E^{GM}(x_0)$ contains these points.
\end{rema}
Let $\mathcal{T}_{\epsilon}(S)=\{x\in\mathcal{T}(S):\underline{l}(x)>\epsilon\}$ be the $\epsilon-$Thick part of $\mathcal{T}(S)$, where $\underline{l}(x)=\min_{\alpha\in\mathcal{S}}l(x,\alpha)$. The following result characterizes the points of $\partial E^{GM}(x_0)$.
\begin{theo}\label{main4}
For any $p\in GM$, $p\in \partial E^{GM}(x_0)$ for some $x_0\in\mathcal{T}(S)$ if and only if there exists a sequence $\{p_n\}_{n=1}^{\infty}\subseteq \mathcal{T}_{\epsilon}(S)$ for some $\epsilon>0$ such that $\lim_{n\rightarrow\infty}p_n=p$.
\end{theo}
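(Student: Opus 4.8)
The plan is to prove the two implications separately, exploiting the fact that, by Theorem \ref{main3} together with Corollary \ref{coro4}, $\partial E^{GM}(x_0)$ is precisely the set of limits in $GM$ of orbit sequences $f_n(x_0)$ with $f_n\in Mod(S)$.

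For the ``only if'' direction, suppose $p=\Psi_{GM}([f_0],x_0)$ with $[f_0]\in\partial E$. Choosing $f_n\in Mod(S)$ with $\lim_{n\to\infty}[f_n]=[f_0]$ in $Cl(E)$, Corollary \ref{coro4} gives $\lim_{n\to\infty}f_n(x_0)=p$ in $\mathcal{T}^{GM}(S)$. The key observation is that the whole orbit of $x_0$ stays uniformly thick: since $l(f(x_0),\alpha)=l(x_0,f^{-1}(\alpha))$ and $f^{-1}$ permutes $\mathcal{S}$, one has $\underline{l}(f(x_0))=\underline{l}(x_0)$ for every $f\in Mod(S)$. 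Hence, setting $\epsilon=\underline{l}(x_0)/2$, the sequence $p_n:=f_n(x_0)$ lies in $\mathcal{T}_{\epsilon}(S)$ and converges to $p$, as required.

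For the ``if'' direction the idea is to use Mumford compactness to relocate a thick approximating sequence into a fixed compact region of $\mathcal{T}(S)$ by the mapping class group, thereby producing the base point $x_0$. Given $p_n\in\mathcal{T}_{\epsilon}(S)$ with $p_n\to p$, Mumford's compactness criterion ensures that the image of $\mathcal{T}_{\epsilon}(S)$ in moduli space $\mathcal{T}(S)/Mod(S)$ is compact, so there are $f_n\in Mod(S)$ with $f_n(p_n)$ lying in a fixed compact subset of $\mathcal{T}(S)$; passing to a subsequence we may assume $f_n(p_n)\to x_0$ for some $x_0\in\mathcal{T}(S)$. Writing $h_n=f_n^{-1}$ and using that $Mod(S)$ acts by isometries of the Teichm\"uller metric $d_T$, we obtain
\[
d_T\big(h_n(x_0),p_n\big)=d_T\big(f_n^{-1}(x_0),f_n^{-1}(f_n(p_n))\big)=d_T\big(x_0,f_n(p_n)\big)\to 0.
\]
It then remains to transfer the convergence $p_n\to p$ to the orbit sequence $h_n(x_0)$ and to read off a point of $\partial E$. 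For the transfer I would invoke Kerckhoff's formula $d_T(x,y)=\tfrac12\log\sup_{\alpha}Ext(x,\alpha)/Ext(y,\alpha)$ (see \cite{Kerc}): since $d_T(h_n(x_0),p_n)\to 0$, the ratios $Ext^{1/2}(h_n(x_0),\alpha)/Ext^{1/2}(p_n,\alpha)$ tend to $1$ uniformly in $\alpha$, which forces $[Ext^{1/2}(h_n(x_0),\cdot)]$ and $[Ext^{1/2}(p_n,\cdot)]$ to share the same limit in $PR_{\geq0}^{\mathcal{S}}$, so that $h_n(x_0)\to p$ in $\mathcal{T}^{GM}(S)$. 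Finally, because $p\in GM$ is a boundary point, the $h_n$ can repeat each value only finitely often; hence by the compactness of $Cl(E)$ (Theorem \ref{main}) and the discreteness of $E$ in $Cl(E)$ (Proposition \ref{prop1}) some subsequence satisfies $[h_{n_k}]\to[f_0]$ with $[f_0]\in\partial E$. Corollary \ref{coro4} then yields $\lim_k h_{n_k}(x_0)=\Psi_{GM}([f_0],x_0)\in\partial E^{GM}(x_0)$, and comparing limits gives $p=\Psi_{GM}([f_0],x_0)\in\partial E^{GM}(x_0)$.

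I expect the main obstacle to lie in the ``if'' direction, and specifically in the two transfers it requires. The first is the relocation of the approximating sequence into a compact part of $\mathcal{T}(S)$, where the uniform lower bound $\epsilon$ on the systole is exactly what makes Mumford compactness applicable; without a fixed $\epsilon$ the relocated points could still escape every compact set, and no single base point $x_0$ would emerge. The second is the claim that two sequences at vanishing Teichm\"uller distance converge to the same Gardiner-Masur boundary point, for which Kerckhoff's comparison of $d_T$ with ratios of extremal lengths is the essential input. Once $h_n(x_0)\to p$ has been secured, extracting the corresponding point of $\partial E$ is immediate from the compactness machinery developed in Section \ref{sec2}.
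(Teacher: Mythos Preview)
Your proof is correct and follows the paper's argument closely in the ``only if'' direction and in the initial use of Mumford compactness for the ``if'' direction. The one genuine difference lies in how you pass from $p_n\to p$ to a statement about $\partial E^{GM}(x_0)$.

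The paper does not freeze the Teichm\"uller point: it keeps the moving pair $([f_n^{-1}],x_n)$ with $x_n=f_n(p_n)\to x_0$, extracts a limit $[f]\in Cl(E)$ of $[f_n^{-1}]$, and then applies the \emph{joint} continuity of $\Psi_{GM}$ from Theorem~\ref{main5} to conclude directly that $p=\lim\Psi_{GM}([f_n^{-1}],x_n)=\Psi_{GM}([f],x_0)$. Your route instead first replaces $p_n$ by $h_n(x_0)$ via Kerckhoff's extremal-length formula for $d_T$, showing that two sequences at vanishing Teichm\"uller distance share the same Gardiner--Masur limit, and only afterwards invokes the compactness of $Cl(E)$ through Corollary~\ref{coro4}. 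Both transfers are sound; yours is arguably more elementary since it bypasses the full strength of Theorem~\ref{main5}, at the cost of importing Kerckhoff's formula, while the paper's version is shorter because it cashes in on the machinery already built in Section~\ref{sec4}.
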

\begin{proof}
Suppose that $p\in \partial E^{GM}(x_0)$ for some $x_0\in\mathcal{T}(S)$. Then $p=\lim_{n\rightarrow\infty}f_n(x_0)$ for some sequence $\{f_n\}_{n=1}^{\infty}\subseteq Mod(S)$. Note that $\underline{l}\big(f_n(x_0)\big)\equiv\underline{l}(x_0)$. Set $\epsilon=\frac{1}{2}\underline{l}(x_0)$. Then $f_n(x_0)\in \mathcal{T}_{\epsilon}(S)$.

Suppose that $\{p_n\}_{n=1}^{\infty}\subseteq \mathcal{T}_{\epsilon}(S)$ for some $\epsilon>0$ and $\lim_{n\rightarrow\infty}p_n=p$. Then from the Mumford's compactness criterion, we know that after projecting to the moduli space $\mathcal{M}(S)=\mathcal{T}(S)/Mod(S)$, the sequence $\{p_n\}_{n=1}^{\infty}$ lies in a precompact set $\mathcal{M}_{\epsilon}(S)=\mathcal{T}_{\epsilon}(S)/Mod(S)$, which is the $\epsilon$-thick part of $\mathcal{M}(S)$. Thus passing to a subsequence, we assume that there exists a sequence $\{f_n\}_{n=1}^{\infty}$ in $Mod(S)$ and $x_0$ in $\mathcal{T}(S)$ such that
\begin{equation*}
\lim_{n\rightarrow\infty}f_n(p_n)=x_0.
\end{equation*}
Set $x_n=f_n(p_n)$. Then $p_n=f_n^{-1}(x_n)$ and $\lim_{n\rightarrow\infty}x_n=x_0$. By the compactness of $Cl(E)$, passing to a subsequence again, we assume that $[f_n^{-1}]$ converges to some $[f]$ in $Cl(E)$. Thus by Theorem \ref{main5},
\begin{equation*}
p=\lim_{n\rightarrow\infty}p_n=\lim_{n\rightarrow\infty}f_n^{-1}(x_n)=\lim_{n\rightarrow\infty}\Psi_{GM}([f^{-1}_n],x_n)=\Psi_{GM}([f],x_0)
=[Ext^{\frac{1}{2}}(x_0,\overline{f}(\cdot))].
\end{equation*}
Since $[Ext^{\frac{1}{2}}(x_0,\overline{f}(\cdot))]=p\in GM$, we have $[f]\in \partial E$. So $p\in \partial E^{GM}(x_0)$.
\end{proof}

\addcontentsline{toc}{section}{\refname}

\bibliographystyle{plain}
\bibliography{Anewboundaryofmcg}
\end{document}